\title{Monoidal Pull-Push II: Local Systems}
\author{Angus Rush}
\begin{document}

\maketitle 
\tableofcontents

\section{Introduction and conventions}
\label{sec:introduction_and_conventions}

\subsection{Introduction}
\label{ssc:introduction}

In topology, one often studies spaces via algebraic invariants associated to them. One such invariant of particular interest is the \emph{$n$th homology} of a space $X$, denoted $H_{n}(X)$. Roughly speaking, $H_{n}(X)$ is the abelian group consisting of formal sums of certain maps from the standard $n$-simplex $\Delta^{n}$ into $X$, considered modulo some equivalence relation.

It turns out to be fruitful to generalize $n$th homology. Formal sums are nothing else but $\Z$-linear combinations, and one can ask what happens when $\Z$ is replaced by some other commutative ring $R$. The associated invariant is then known as \emph{$n$th homology with coefficients in $R$}. One can even allow the ring $R$ to vary along $X$, or allow $R$ to be not only a ring, but an object of some appropriate category $\category{C}$. The structure which keeps track of these changing coefficients is called a $\category{C}$-\emph{local system}.

Classically speaking, when we say \emph{space,} we really mean \emph{topological space,} and $\category{C}$-local systems are modelled by locally-constant sheaves with values in $\category{C}$. In modern homotopy theory, one often takes \emph{space} to mean \emph{$\infty$-groupoid;} thought of in this way, a $\category{C}$-local system on a space $X$ is nothing more than a functor $X \to \category{C}$. This is the point of view that we will take.

With this point of view in mind, let $\category{C}$ be a cocomplete $\infty$-category. For any space $X$, denote by $\LS(\category{C})_{X}$ the $\infty$-category of $\category{C}$-local systems on $X$; that is, $\LS(\category{C})_{X} \simeq \Fun(X, \category{C})$. Recall that for any morphism $f\colon X \to Y$ of spaces, there are several associated functors between $\LS(\category{C})_{X}$ and $\LS(\category{C})_{Y}$. In particular:
\begin{itemize}
  \item The \emph{pullback functor} $f^{*}\colon \LS(\category{C})_{Y} \to \LS(\category{C})_{X}$ pulls back local systems on $Y$ to local systems on $X$, sending a local system $\mathcal{F}\colon Y \to \category{C}$ to the local system $f^{*}\mathcal{F}$ given by the composition
    \begin{equation*}
      \begin{tikzcd}
        X
        \arrow[r, "f"]
        & Y
        \arrow[r, "\mathcal{F}"]
        & \category{C}
      \end{tikzcd}.
    \end{equation*}

  \item The \emph{pushforward functor} $f_{!}\colon \LS(\category{C})_{X} \to \LS(\category{C})_{Y}$ pushes forward local systems via left Kan extension, sending a local system $\mathcal{G}\colon X \to \category{C}$ to the left Kan extension $f_{!}\mathcal{G}$.
    \begin{equation*}
      \begin{tikzcd}
        X
        \arrow[rr, "\mathcal{G}"]
        \arrow[dr, swap, "f"]
        && \category{C}
        \\
        & Y
        \arrow[ur, swap, "f_{!}G"]
      \end{tikzcd}
    \end{equation*}
\end{itemize}

Note that pulling back is contravariantly functorial, and pushing forward is covariantly functorial. One can combine these functorialities. Given the data of a diagram of spaces of the form
\begin{equation}
  \label{eq:span}
  \begin{tikzcd}
    & Y
    \arrow[dl, swap, "g"]
    \arrow[dr, "f"]
    \\
    X
    && X'
  \end{tikzcd},
\end{equation}
one can produce a functor $\LS(\category{C})_{X} \to \LS(\category{C})_{X'}$ via the composition
\begin{equation*}
  \begin{tikzcd}
    & \LS(\category{C})_{Y}
    \arrow[dr, "f_{!}"]
    \\
    \LS(\category{C})_{X}
    \arrow[ur, "g^{*}"]
    \arrow[rr, swap, "f_{!} \circ g^{*}"]
    && \LS(\category{C})_{X'}
  \end{tikzcd}.
\end{equation*}

The data of \hyperref[eq:span]{Diagram~\ref*{eq:span}} is known as a \emph{span} of spaces. It is natural to ask whether this construction can be extended to a functor $\Span(\S) \to \ICat$, where $\Span(\S)$ is an $\infty$-category whose objects are spaces, and whose morphisms are spans of spaces.

This is the second part of a two-part paper. In the first part, we provided a simplified proof of a theorem of Barwick \cite[Thm.~12.2]{spectralmackeyfunctors1}, providing several sufficient conditions for a functor of quasicategories $p\colon \category{C} \to \category{D}$ to yield a cocartesian fibration between $\infty$-categories of spans $\Span(p)\colon \Span(\category{C}) \to \Span(\category{D})$, and hence a functor $\hat{r}\colon \Span(\category{D}) \to \ICat$ via the Grothendieck construction. One such sufficient condition is that $p$ be a so-called \emph{Beck--Chevalley fibration} (\hyperref[def:beck_chevalley_fibration]{Definition~\ref*{def:beck_chevalley_fibration}}). Roughly speaking, a Beck--Chevalley fibration is a bicartesian fibration satisfying a straightened version of the Beck--Chevalley condition. More information about Beck--Chevalley fibrations is given in the introduction to \hyperref[sec:the_non_monoidal_construction]{Section~\ref*{sec:the_non_monoidal_construction}} of this work; a more in-depth introduction is given in \cite[Sec.~4.1]{luriehopkins2013ambidexterity}. It is an easy consequence of the theory of Beck--Chevalley fibrations that there exists a functor $\hat{r}\colon \Span(\category{S}) \to \ICat$, which sends a space $X$ to the category $\LS(\category{C})_{X}$ of local systems on $X$, and a morphism in $\Span(\S)$ represented by a span
\begin{equation*}
  \begin{tikzcd}
    & Y
    \arrow[dl, swap, "g"]
    \arrow[dr, "f"]
    \\
    X
    && X'
  \end{tikzcd}
\end{equation*}
to the functor $f_{!} \circ g^{*}\colon \LS(\category{C})_{X} \to \LS(\category{C})_{X'}$. This is the main result which we will prove in \hyperref[sec:the_non_monoidal_construction]{Section~\ref*{sec:the_non_monoidal_construction}}.

In this paper, we introduce a generalization of Beck--Chevalley fibrations, which we call \emph{monoidal Beck--Chevalley fibrations.} Roughly speaking, a monoidal Beck--Chevalley fibration $\tilde{p}\colon \category{C}_{\otimes} \to \category{D}_{\boxtimes}$ is an enhancement of an ordinary Beck--Chevalley fibration $p\colon \category{C} \to \category{D}$, which takes into account monoidal structures on $\category{C}$ and $\category{D}$, and provides a lax monoidal structure structure on the functor $\Span(\category{D}) \to \ICat$.

We use the theory of monoidal Beck--Chevalley fibrations to show that our functor $\hat{r}\colon \Span(\S) \to \ICat$ constructed in \hyperref[sec:the_non_monoidal_construction]{Section~\ref*{sec:the_non_monoidal_construction}} admits a lax monoidal structure with respect to a monoidal structure on $\Span(\S)$ induced by the cartesian structure on $\S$, and the cartesian structure on $\ICat$.

\subsection{Relation to previous work}
\label{ssc:relation_to_previous_work}

That pull-push of local systems can be written as a symmetric monoidal functor out of a category of spans is far from a new idea. Our approach differs from previous ones, however, in that our aim is to provide a simple, explicit construction of monoidal pull-push of local systems, without any pretensions of working in complete generality when it does not, in our view, lead to greater conceptual clarity.

There have been (to the knowledge of the author) two main works with results similar to those in this paper.
\begin{itemize}
  \item In \cite{spectralmackeyfunctors2}, similar results to those in \hyperref[ssc:monoidal_beck_chevalley_fibrations]{Subsection~\ref*{ssc:monoidal_beck_chevalley_fibrations}}, about monoidal Beck-Chevalley fibrations, are stated in a rather different situation. The application to local systems is not considered. We have treated these results in less generality.

  \item In \cite{gaitsgory2019study}, results about extending a functor out of a category of spans are proved, but more generally, more abstractly, and for the most part model-independently; there, spans come in $(\infty, 2)$-categories, modelled when necessary by complete 2-fold Segal spaces. Our approach is more explicit, based on hands-on combinatorial computations done in a quasicategorical twisted arrow category. We believe that the explicitness of our model will be useful in our future work.
\end{itemize}

\subsection{Outline}
\label{ssc:outline}

In \hyperref[sec:horn_filling_via_left_kan_extensions]{Section~\ref*{sec:horn_filling_via_left_kan_extensions}}, we generalize the universal property satisfied by Kan extensions of $1$-categories to $\infty$-categorical Kan extensions, and show how this universal property can be used to solve lifting problems in $\ICCat$.

Our proof that left Kan extensions enjoy this lifting property uses that left Kan extension is left adjoint to restriction; this can be used even if not all left Kan extensions along a functor exist. To formalize this, we introduce in \hyperref[ssc:adjunct_data]{Subsection~\ref*{ssc:adjunct_data}} the notion of a \emph{partial adjunction,} and use this to show that data on one side of an adjunction of $\infty$-categories can be used to fill adjunct data on the other side. Using this, we show in \hyperref[ssc:left_kan_implies_globally_left_kan]{Subsection~\ref*{ssc:left_kan_implies_globally_left_kan}} that this generalized lifting property really is satisfied.

In \hyperref[sec:local_systems]{Section~\ref*{sec:local_systems}}, we apply our results about left Kan extensions to show, via an explicit computation in an appropriate model (an $\infty$-categorical twisted arrow category), that left Kan extensions of local systems are $\infty$-functorial, i.e.\ extends to a functor $\S \to \ICat$. We then use this to show that pull-push functoriality extends to a functor $\Span(\S) \to \ICat$. In \hyperref[sec:the_monoidal_construction]{Section~\ref*{sec:the_monoidal_construction}}, we show that this construction can be endowed with a lax monoidal structure.

\subsection{General conventions and set-theoretic size issues}
\label{ssc:general_conventions}

We do our level best to adhere to the following terminological and typographical practices regarding various types of categories.
\begin{itemize}
  \item When we say `$\infty$-groupoid,' we will mean an $(\infty, 0)$-category, here always modelled by a Kan complex. We will in general denote $\infty$-groupoids by capital roman letters coming from the end of the alphabet: $X$, $Y$, \dots

  \item When we say `$\infty$-category,' we will mean an $(\infty, 1)$-category, usually modelled by a quasicategory. We will strive to denote $\infty$-categories by calligraphic letters typeset using the \textsf{eucal} package: $\category{C}$, $\category{D}$, \dots

  \item When we say `$\infty$-bicategory,' we will mean an $(\infty, 2)$-category in the sense of \cite{lurie2009infinity}, modelled by a fibrant scaled or marked-scaled\footnote{The theory of marked-scaled simplicial sets is given in \cite{garcia2cartesianfibrationsii}, and summarized in \hyperref[ssc:marked-scaled_model_structure]{Subsection~\ref*{ssc:marked-scaled_model_structure}}.} simplicial set; which one we mean should be clear from context. We will denote $(\infty, 2)$-categories by blackboard-bold letters typeset with the \textsf{mathbbol} package: $\CC$, $\DD$, \dots
\end{itemize}

We follow Lurie in our definitions of categories of $\infty$-categories (except for a few size-related transgressions explained below). We also adhere to the above typographical conventions in doing so. Thus:
\begin{itemize}
  \item The $(\infty, 1)$-category of spaces is $\S$. We model this as a quasicategory, constructed (as in \cite{highertopostheory}) as the homotopy-coherent nerve (as described in \cite[Sec.~1.1.5]{highertopostheory}, there called the simplicial nerve) of $\Kan$, the Kan-enriched category of Kan complexes.

  \item The $(\infty, 1)$-category of $(\infty,1)$-categories is $\ICat$. We model this as a quasicategory, defined to be the homotopy-coherent nerve of $\QCat$, the Kan-enriched category of quasicategories.

  \item The $(\infty,2)$-category of $(\infty, 1)$-categories is $\ICCat$. We model this as an $\infty$-bicategory, defined to be the scaled nerve (as described in \cite[Sec.~3.1]{lurie2009infinity}) of the $\SSetmk$-enriched category of quasicategories.

  \item The $(\infty, 1)$-category of $(\infty,2)$-categories is $\Cat_{(\infty, 2)}$. This is used only briefly, and will be modelled as the simplicial localization of $(\SSetms)^{f}$, the full subcategory of marked-scaled simplicial sets on fibrant objects, at the class of bicategorical equivalences.
\end{itemize}

For the most part, our conventions regarding set-theoretic size issues are standard. However, we will at several points point need to consider an $\infty$-category whose objects are large $\infty$-categories. The rigorous solution would be to introduce a series of nested Grothendieck universe, keep track of which one we are currently in, and carry this around as extra notation. However, there are no arguments in this paper which hinge on any set-theoretic size-issues, and the author feels that clarity is lost, rather than gained, by introducing this extraneous notation. Therefore, we will use the same notation for the large $\infty$-category of small $\infty$-categories and the `huge' $\infty$-category of large $\infty$-categories.

Suppose $K$ is a simplicial set and $\category{C}$ is an $\infty$-category. By $\Fun(K, \category{C})$, we mean the $\infty$-category of maps $K \to \category{C}$. By $\Map(K, \category{C})$, we mean the $\infty$-groupoid of such maps; that is,
\begin{equation*}
  \Map(K, \category{C}) = \Fun(K, \category{C})^{\simeq},
\end{equation*}
where $(-)^{\simeq}\colon \SSet \to \Kan$ denotes the \emph{core} functor, i.e.\ the functor associating to a simplicial set $K$ the largest Kan complex contained in it.

\subsection{The marked-scaled model structure}
\label{ssc:marked-scaled_model_structure}

We will need two different models for $(\infty,2)$-categories: Lurie's theory of scaled simplicial sets, as laid out in \cite{lurie2009infinity}; and Abell\'an--Stern's theory of marked-scaled simplicial sets, as explained in \cite{garcia2cartesianfibrationsii}. We will assume a knowledge of scaled simplicial sets. We give a basic outline of the portions of the theory of marked-scaled simplicial sets which we will need.

A marked-scaled simplicial set is a triple $(X, E_{X}, T_{X})$, where $E_{X} \subseteq X_{1}$ is a collection of edges of $X$ containing all degenerate edges, and $T_{X} \subseteq X_{2}$ is a collection of triangles of $X$ containing all degenerate triangles. We will use the following notation.
\begin{itemize}
  \item We will denote the category of marked-scaled simplicial sets by $\SSetms$.

  \item To save on notation, we will sometimes denote the marked-scaled simplicial set $(X, E_{X}, T_{X})$ by $X^{E_{X}}_{T_{X}}$, particularly in the case that $E_{X}$ or $T_{X}$ are $\sharp$ or $\flat$, the maximum (resp. minimum) markings and scalings. For example, the bimarked simplicial set $X^{\sharp}_{\flat}$ has all 1-simplices marked and only degenerate 2-simplices scaled.

  \item Let $(\Delta^{n}, E, T)$ be a marked-scaled $n$-simplex. For any simplicial subset $S \subseteq \Delta^{n}$, we will denote by $(S, E, T)$ the simplicial subset $S$ together with the inherited marking and scaling.
\end{itemize}

There is a set of marked-scaled anodyne morphisms, which have the left lifting property with respect to marked-scaled fibrations. We will not need to use the full power of the marked-scaled anodyne morphisms, so we content ourselves with an incomplete description.

\begin{definition}
  \label{def:ms-anodyne_morphisms}
  The set of ms-anodyne morphisms is a saturated set of morphisms between marked-scaled simplicial sets containing the following classes of morphisms:
  \begin{enumerate}[label=(A\arabic*)]
    \item\label{item:innerms} Inner horn inclusions
      \begin{equation*}
        (\Lambda^{n}_{i}, \flat, \{\Delta^{\{i-1,i,i+1\}}\}) \to (\Delta^{n}, \flat, \{\Delta^{\{i-1,i,i+1\}}\}),
      \end{equation*}
      for $n \geq 2$ and $0 < i < n$.

    \item\label{item:outerms} Outer horn inclusions
      \begin{equation*}
        (\Lambda^{n}_{n}, \{\Delta^{\{n-1,n\}}\}, \{\Delta^{\{0, n-1, n\}}\}),
      \end{equation*}
      for $n \geq 1$.
  \end{enumerate}
\end{definition}

\begin{example}
  \label{prop:sharp_marked_right_anodyne}
  The marked-scaled anodyne morphisms encapsulate both left- and marked-anodyne morphisms.
  \begin{itemize}
    \item For any right anodyne morphism $A \hookrightarrow B$, the morphism $A^{\sharp}_{\sharp} \hookrightarrow B^{\sharp}_{\sharp}$ is marked-scaled anodyne.

    \item For any marked anodyne morphism $(A, \mathcal{E}) \to (B, \mathcal{F})$, the morphism $A^{\mathcal{E}}_{\sharp} \to B^{\mathcal{F}}_{\sharp}$ is marked-scaled anodyne.
  \end{itemize}
\end{example}

\begin{theorem}
  There is a model structure on the category $\SSetms$, whose trivial cofibrations are contain the set of marked-scaled anodyne maps, and whose fibrant objects are $\infty$-bicategories with the equivalences marked and thin simplices scaled.
\end{theorem}

\begin{theorem}
  \label{thm:quillen_equiv_ms_and_scaled}
  There is a Quillen equivalence
  \begin{equation*}
    (-)_{\flat} : \SSetsc \longleftrightarrow \SSetms : G,
  \end{equation*}
  where $(-)^{\flat}$ endows any scaled simplicial set with the flat marking, and $G$ forgets the marking.
\end{theorem}

\subsection{Selected results from Monoidal Pull-Push I}
\label{ssc:selected_results_from_mppi}

We will at several points make use of the results of the first part of this paper, Monoidal Pull-Push I. We reproduce here our main theorems there, both of which will be instrumental in this paper. These give sufficient conditions on a functor $p\colon \category{C} \to \category{D}$ so that the induced functor $\Span(\category{C}) \to \Span(\category{D})$ is a cocartesian fibration. Here, the subcategories $\category{C}\downdag$ and $\category{C}\updag$ pick out distinguished classes of morphisms to which the legs of the spans of $\Span(\category{C})$ are allowed to belong; the `forwards-facing' legs of $\Span(\category{C})$ are restricted to come from the subcategory $\category{C}\downdag$, and the `forward-facing' legs from $\category{C}\updag$; and similarly for $\category{D}\downdag$ and $\category{D}\updag$.

\begin{theorem}
  \label{thm:old_barwick}
  Let $p\colon \triple{C} \to \triple{D}$ be a functor between adequate triples such that $p\colon \category{C} \to \category{D}$ is an inner fibration which satisfies the following conditions.
  \begin{enumerate}
    \item Each morphism $g \in \category{D}\downdag$ admits a lift to a morphism in $\category{C}\downdag$ (given a lift of the source) which is both $p$-cocartesian and $p\downdag$-cocartesian.

    \item Consider a commutative square
      \begin{equation*}
        \sigma = \quad
        \begin{tikzcd}
          y'
          \arrow[r, rightarrowtail, "f'"]
          \arrow[d, two heads, swap, "g'"]
          & x'
          \arrow[d, "g"]
          \\
          y
          \arrow[r, rightarrowtail, "f"]
          & x
        \end{tikzcd}
      \end{equation*}
      in $\category{C}$ where $g'$ belongs to $\category{C}\updag$, and $f$ and $f'$ belong to $\category{C}\downdag$. Suppose that $f$ is $p$-cocartesian. Then $f'$ is $p'$-cocartesian if and only if $\sigma$ is an ambigressive pullback square (and in particular $g \in \category{C}\updag$).
  \end{enumerate}

  Then spans of the form
  \begin{equation*}
    \begin{tikzcd}
      & z
      \arrow[dl, two heads, swap, "g"]
      \arrow[dr, rightarrowtail, "f"]
      \\
      x
      && y
    \end{tikzcd}
  \end{equation*}
  are cocartesian, where $g$ is $p\updag$-cartesian and $f$ is $p$-cocartesian.
\end{theorem}

\begin{theorem}
  \label{thm:new_barwick}
  Let $p\colon \triple{C} \to \triple{D}$ be a functor between adequate triples such that $p\colon \category{C} \to \category{D}$ is an inner fibration which satisfies the following conditions.
  \begin{enumerate}
    \item The subcategory $\category{C}\updag \subseteq \category{C}$ consists of all $p$-cartesian morphisms in $\category{C}$; that is, an $n$-simplex in $\category{C}$ belongs to $\category{C}\updag$ if and only if each $1$-simplex it contains is $p$-cartesian.

    \item The map $p\updag\colon \category{C}\updag \to \category{D}\updag$ is a cartesian fibration.

    \item Consider a square
      \begin{equation*}
          \sigma = \quad
          \begin{tikzcd}
            y'
            \arrow[r, "f'"]
            \arrow[d, two heads, swap, "g'"]
            & x'
            \arrow[d, two heads, "g"]
            \\
            y
            \arrow[r, rightarrowtail, "f"]
            & x
          \end{tikzcd}
      \end{equation*}
      in $\category{C}$ where $g$ and $g'$ belong to $\category{C}\updag$, and $f$ belongs to $\category{C}\downdag$. Further suppose that $f$ is $p$-cocartesian. Then $f'$ belongs to $\category{C}\downdag$, and is both $p$-cocartesian and $p\downdag$-cocartesian.
  \end{enumerate}
  Then spans of the form
  \begin{equation*}
    \begin{tikzcd}
      & z
      \arrow[dl, two heads, swap, "g"]
      \arrow[dr, rightarrowtail, "f"]
      \\
      x
      && y
    \end{tikzcd}
  \end{equation*}
  are cocartesian, where $g$ is $p\updag$-cartesian and $f$ is $p$-cocartesian.
\end{theorem}

\subsection{Acknowledgements}
\label{ssc:acknowledgements}

We would like to thank his advisor, Tobias Dyckerhoff, for his patience and guidance. We would also like to thank Fernando Abell{\'a}n Garc{\'i}a for many helpful conversations about the results contained in this paper, especially those pertaining to the twisted arrow category.

\section{Horn filling via left Kan extensions}
\label{sec:horn_filling_via_left_kan_extensions}

Kan extensions of $\infty$-categories are usually defined pointwise via the colimit formula \cite{highertopostheory} \cite{cisinski2019higher}. In this section, we will show that such pointwise $\infty$-Kan extensions enjoy a horn filling property in $\ICCat$ which generalizes the universal property for Kan extensions of functors between $1$-categories.

We will explain the precise meaning of the below definition in \hyperref[ssc:basic_facts_about_kan_extensions]{Subsection~\ref*{ssc:basic_facts_about_kan_extensions}}.

\begin{definition}
  \label{def:left_kan}
  We will say that a $2$-simplex $\tau\colon \Delta^{2}_{\flat} \to \ICCat$,
  \begin{equation}
    \label{eq:prototypical_left_kan_simplex}
    \tau =
    \begin{tikzcd}
      X
      \arrow[dr, swap, "f"]
      \arrow[rr, "F", ""{below, name=M}]
      && \category{C}
      \\
      & Y
      \arrow[ur, swap, "G"]
      \arrow[from=M, Rightarrow, swap, "\eta"]
    \end{tikzcd},
  \end{equation}
  is \defn{left Kan} if the natural transformation $\eta$ exhibits $G$ as the left Kan extension of $F$ along $f$.
\end{definition}

The goal of this section is to prove the following.

\begin{theorem}
  \label{thm:left_kan_implies_globally_left_kan}
  Let $\tau\colon \Delta^{2}_{\flat} \to \ICCat$ be a left Kan 2-simplex. Then for each $n \geq 2$, every solid lifting problem of the form
  \begin{equation}
    \label{eq:lifting_problems_for_global_kan_extensions}
    \begin{tikzcd}
      \Delta^{\{0,1,n+1\}}_{\flat}
      \arrow[d, hook]
      \arrow[dr, "\tau"]
      \\
      (\Lambda^{n+1}_{0})_{\flat}
      \arrow[r]
      \arrow[d, hook]
      & \ICCat
      \\
      \Delta^{n+1}_{\flat}
      \arrow[ur, dashed]
    \end{tikzcd}
  \end{equation}
  admits a dashed solution.
\end{theorem}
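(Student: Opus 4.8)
The plan is to translate the horn-filling problem \eqref{eq:lifting_problems_for_global_kan_extensions}, via the join/slice calculus, into the problem of lifting a single edge against a right fibration, and then to solve that using the universal property of $\eta$. As will be recalled in \hyperref[ssc:basic_facts_about_kan_extensions]{Subsection~\ref*{ssc:basic_facts_about_kan_extensions}}, to say that $\tau$ is left Kan is to say that $(G,\eta)$ is an initial object of the comma $\infty$-category $\mathrm{Fun}(Y,\category{C})\times_{\mathrm{Fun}(X,\category{C})}\mathrm{Fun}(X,\category{C})_{F/}$ (whose objects are pairs $(H\colon Y\to\category{C},\ \gamma\colon F\Rightarrow H\circ f)$); equivalently, that for every $H\colon Y\to\category{C}$ the map $\mathrm{Map}_{\mathrm{Fun}(Y,\category{C})}(G,H)\to\mathrm{Map}_{\mathrm{Fun}(X,\category{C})}(F,H\circ f)$, $\beta\mapsto(\beta f)\circ\eta$, is an equivalence. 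Since $\eta$ is moreover a \emph{pointwise} Kan extension, this universal property will be available in the parametrized form (over an auxiliary diagram) that the argument needs.

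First I would unwind the combinatorics. Put $P:=\Delta^{\{2,\dots,n+1\}}$, so that $\Delta^{n+1}=\Delta^{\{0,1\}}\star P$, and note the identity of simplicial sets
\[
\Lambda^{n+1}_0=\bigl(\Delta^{\{0\}}\star P\bigr)\ \cup_{\,\Delta^{\{0\}}\star\partial P}\ \bigl(\Delta^{\{0,1\}}\star\partial P\bigr),
\]
together with the fact that the distinguished face $\Delta^{\{0,1,n+1\}}$ is contained in $\Delta^{\{0,1\}}\star\partial P$ — here the hypothesis $n\ge 2$ is exactly what makes $\{n+1\}$ a proper face of $P$, so that this is meaningful. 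Write $\overline D\colon P\to\ICCat$ for the restriction of the given horn and $\partial\overline D:=\overline D|_{\partial P}$. Using that a map out of the left cone $\Delta^{0}\star(-)$ extending a fixed diagram is a vertex of the corresponding over-slice, while a map out of $\Delta^{1}\star(-)$ is an edge of that over-slice, the lifting problem translates (compatibly with the markings $(-)_\flat$) into: for the restriction map $r\colon\ICCat_{/\overline D}\to\ICCat_{/\partial\overline D}$ — a right fibration, like any such restriction between over-slices — a vertex $v_0$ of $\ICCat_{/\overline D}$ and an edge $\epsilon_0$ of $\ICCat_{/\partial\overline D}$ with $r(v_0)=\epsilon_0|_{\{0\}}$, produce an edge $\epsilon$ of $\ICCat_{/\overline D}$ with $\epsilon|_{\{0\}}=v_0$ and $r(\epsilon)=\epsilon_0$. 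Here $v_0$ and $\epsilon_0$ are precisely the data carried by the horn, and the prescribed-face condition is automatic once one observes that $\epsilon_0$ restricts, over the terminal vertex $n+1\in P$, to the $2$-simplex $\tau$ itself.

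Producing the edge $\epsilon$ is where the hypothesis becomes indispensable: a right fibration does not split arbitrary outgoing edges. Over the terminal vertex $n+1$ the picture collapses to a single Kan-extension square: the leg of $v_0$ there is $F$, the component of $\epsilon_0$ there is $\tau$, and it carries $\eta$. Producing $\epsilon$ amounts to extending the cone $Y\to\partial\overline D$ (whose $(n+1)$-leg is $G$) to a cone $Y\to\overline D$, together with the coherent family of comparison $2$-cells identifying its restriction along $f$ with $v_0$, the one over $n+1$ being $\eta$. This is exactly the universal property of $G=\mathrm{Lan}_fF$, but taken \emph{over the diagram} $\overline D$: because $\eta$ is a pointwise left Kan extension, ``$-\circ f$ followed by $\eta$'' induces an equivalence not just on mapping spaces but on the relevant spaces of $\overline D$-cone extensions, and $\epsilon$ is the essentially unique preimage of $v_0$ under that equivalence.

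I expect the last step to be the main obstacle: upgrading the one-square universal property to its parametrized form — concretely, that $r$, restricted over the full subcategory of $\ICCat_{/\partial\overline D}$ cut out near the vertex $n+1$ by the pointwise-Kan data $\epsilon_0$ carries, is a trivial fibration. This is where it is cleanest to use the colimit formula for $\eta$, e.g.\ that the formation of $\mathrm{Lan}_f$ is compatible with the slice construction $\ICCat_{/(-)}$. The rest — the join/slice identifications, the bookkeeping of the markings, and the reassembly of $\epsilon$ into the required simplex $\Delta^{n+1}_\flat\to\ICCat$ — is routine. As a check on the shape of the argument, for $n=2$ it reduces to the expected picture: writing the horn as \eqref{eq:prototypical_left_kan_simplex} together with the remaining faces $h\colon X\to Z$, $g\colon Y\to Z$, $k\colon Z\to\category{C}$ and $2$-cells $\mu\colon h\Rightarrow gf$, $\nu\colon F\Rightarrow kh$, the missing $2$-simplex is the essentially unique preimage $\lambda\colon G\Rightarrow kg$ of $(k\mu)\circ\nu$ under $\eta^{*}\colon\mathrm{Map}(G,kg)\to\mathrm{Map}(F,kgf)$, and the filling $3$-simplex is the homotopy $(\lambda f)\circ\eta\simeq(k\mu)\circ\nu$; for $n\ge 3$ the higher simplices come uniformly from the higher-dimensional contractibility of the same homotopy fibers.
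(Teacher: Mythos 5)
Your approach is genuinely different from the paper's. You pass to the underlying quasicategory of $\ICCat$, decompose $\Delta^{n+1}=\Delta^{\{0,1\}}\star P$ with $P=\Delta^{\{2,\ldots,n+1\}}$, rewrite $\Lambda^{n+1}_{0}$ as the pushout-join $\bigl(\Delta^{\{0\}}\star P\bigr)\cup_{\Delta^{\{0\}}\star\partial P}\bigl(\Delta^{\{0,1\}}\star\partial P\bigr)$, and translate the filling problem into producing an edge $\epsilon$ of the right fibration $r\colon\ICCat_{/\overline D}\to\ICCat_{/\partial\overline D}$ with prescribed source $v_{0}$ and image $\epsilon_{0}$. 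The paper instead applies the adjunction $\Csc\dashv\Nsc$, identifies the mapping-space extension problems as cube fillings $LC^{n}\hookrightarrow C^{n}$ and $\partial C^{n-1}\hookrightarrow C^{n-1}$, factors the first into an inner-anodyne stage followed by a marked-anodyne stage (\hyperref[prop:cube_filling]{Proposition~\ref*{prop:cube_filling}}), and transports the remaining data across the local adjunction $f_{1,0}^{*}\dashv f_{1,0,!}$ via \hyperref[prop:can_transport_adjunct_data_partial]{Proposition~\ref*{prop:can_transport_adjunct_data_partial}}.

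Your translation and the $n=2$ sanity check are correct, and $r$ is indeed a right fibration. But there is a genuine gap at exactly the point you flag. Since $v_{0}$ lies over the \emph{source} of $\epsilon_{0}$, the right-fibration structure lifts nothing for you; everything must come from the universal property. The universal property you state at the outset is a single-mapping-space statement, $\mathrm{Map}(G,H)\overset{\sim}{\to}\mathrm{Map}(F,H\circ f)$, whereas producing $\epsilon$ requires a coherent extension of this across the whole family of comparison $2$-cells encoded by $\Delta^{\{0,1\}}\star P$ --- a contractibility statement about the fibers of $r$ over a suitable sub-$\infty$-category of $\ICCat_{/\partial\overline D}$. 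That statement is essentially the content of the theorem being proved, so asserting it is circular without a separate argument, and the appeal to ``compatibility of $\mathrm{Lan}_{f}$ with the slice construction'' is not a citable fact. Substantiating it is precisely what the paper's cube-filling combinatorics and adjunct-data transport do by hand (indeed, those two ingredients correspond exactly to, respectively, the coherence over $\partial P$ and the contractibility coming from the local adjointability of $f_{1,0}^{*}$). So your reduction reorganizes the problem cleanly and the strategy of localizing $r$ is plausible, but the pivotal step is missing and is not elementary.
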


\begin{note}
  One can even show that if the terminal vertex in $\tau$ is mapped to an $\infty$-category $\category{C}$ with sufficient colimits (i.e.\ all colimits in $\category{C}$ indexed by the homotopy fibers of $f$ should exist), then $\tau$ is left kan \emph{if and only if} it enjoys these lifting properties. That is, these lifting properties generalize the universal property for Kan extensions of ordinary categories. However, we will not make use of this result.
\end{note}


Let us examine the lifting problems of \hyperref[eq:lifting_problems_for_global_kan_extensions]{Equation~\ref*{eq:lifting_problems_for_global_kan_extensions}} in the case $n = 2$. In this case we have the data of categories and functors
\begin{equation*}
  \begin{tikzcd}
    & Y
    \arrow[drr, "f_{!}F"]
    \\
    X
    \arrow[dr, swap, "h"]
    \arrow[ur, "f"]
    \arrow[rrr, near end, "F"]
    &&& \category{C}
    \\
    & Z
    \arrow[urr, swap, "G"]
    \arrow[from=uu, crossing over, near start, "g"]
  \end{tikzcd}
\end{equation*}
together with natural transformations $\eta\colon F \Rightarrow f_{!}F \circ f$, $\beta\colon h \Rightarrow g \circ f$, and $\delta\colon F \Rightarrow G \circ h$, making up the sides of $\Lambda^{3}_{0}$. We need to produce a natural transformation $\alpha\colon f_{!}F \Rightarrow G \circ g$ and a filling of the full 3-simplex, which is the data of a homomotopy-commutative diagram
\begin{equation*}
  \begin{tikzcd}
    F
    \arrow[r, "\eta"]
    \arrow[d, swap, "\delta"]
    & f_{!}F \circ f
    \arrow[d, "\alpha f"]
    \\
    G \circ h
    \arrow[r, "G \beta"]
    & G \circ g \circ f
  \end{tikzcd}
\end{equation*}
in $\Fun(X, \category{C})$.\footnote{In particular, if we take $h = f$, $g = \id_{Y}$, and $\beta$ to be the identity $f \Rightarrow f$, we recover the classical universal property satisfied by left Kan extension.}

We can rephrase this as follows. We are given the data
\begin{equation*}
  \overbrace{
  \begin{tikzcd}[ampersand replacement=\&]
    F
    \arrow[r, "\eta"]
    \arrow[d, swap, "\delta"]
    \& f_{!}F \circ f
    \\
    G \circ h
    \arrow[r, "G \beta"]
    \& G \circ g \circ f
  \end{tikzcd}}^{\text{in }\Fun(X, \category{C})}
  \qquad\qquad
  \overbrace{
  \begin{tikzcd}
    f_{!}F
    \\
    G \circ g
  \end{tikzcd}}^{\text{ in }\Fun(Y, \category{C})}
\end{equation*}
where the left-hand diagram is a map $LC^{2} \to \Fun(X, C)$, where $LC^{2}$ is the simplicial subset of the boundary of $\Delta^{1} \times \Delta^{1}$ except the right-hand face, and the right-hand diagram is a map $\partial \Delta^{1} \to \Fun(Y, C)$. We need to construct a filler $\partial \Delta^{1} \hookrightarrow \Delta^{1}$ on the right, and from it a filler $LC^{2} \hookrightarrow \Delta^{1} \times \Delta^{1}$ on the left. We do this in the following sequence of steps.

\begin{enumerate}
  \item We first can fill the lower-left half of the diagram on the left simply by taking the composition $G\beta \circ \delta$. Doing this, we are left with the filling problem
    \begin{equation*}
      \overbrace{
      \begin{tikzcd}[ampersand replacement=\&]
        F
        \arrow[r, "\eta"]
        \arrow[dr]
        \& f_{!}F \circ f
        \\
        \& G \circ g \circ f
      \end{tikzcd}}^{\text{in }\Fun(X, \category{C})}
      \qquad\qquad
      \overbrace{
      \begin{tikzcd}
        f_{!}F
        \\
        G \circ g
      \end{tikzcd}}^{\text{ in }\Fun(Y, \category{C})}.
    \end{equation*}

  \item Using the adjunction $f_{!} \dashv f^{*}$, we can extend the diagram on the right to a diagram which is adjunct to the diagram on the left (in the sense to be described in \hyperref[ssc:adjunct_data]{Subsection~\ref*{ssc:adjunct_data}}):
    \begin{equation*}
      \overbrace{
      \begin{tikzcd}[ampersand replacement=\&]
        F
        \arrow[r, "\eta"]
        \arrow[dr]
        \& f_{!}F \circ f
        \\
        \& G \circ g \circ f
      \end{tikzcd}}^{\text{in }\Fun(X, \category{C})}
      \qquad\qquad
      \overbrace{
        \begin{tikzcd}[ampersand replacement=\&]
          f_{!}F
          \arrow[r, "\id"]
          \arrow[dr]
        \& f_{!}F
        \\
        \& G \circ g
      \end{tikzcd}}^{\text{ in }\Fun(Y, \category{C})}
    \end{equation*}

  \item The diagram on the right has an obvious filler, which is adjunct to a filler on the left.
    \begin{equation*}
      \overbrace{
      \begin{tikzcd}[ampersand replacement=\&]
        F
        \arrow[r, "\eta"]
        \arrow[dr]
        \& f_{!}F \circ f
        \arrow[d, "\alpha f"]
        \\
        \& G \circ g \circ f
      \end{tikzcd}}^{\text{in }\Fun(X, \category{C})}
      \qquad\qquad
      \overbrace{
        \begin{tikzcd}[ampersand replacement=\&]
          f_{!}F
          \arrow[r, "\id"]
          \arrow[dr]
        \& f_{!}F
        \arrow[d, "\alpha"]
        \\
        \& G \circ g
      \end{tikzcd}}^{\text{ in }\Fun(Y, \category{C})}
    \end{equation*}
\end{enumerate}

The lifting problems which we have to solve in \hyperref[eq:lifting_problems_for_global_kan_extensions]{Equation~\ref*{eq:lifting_problems_for_global_kan_extensions}} for $n > 2$ amount to replacing $\Delta^{1} \times \Delta^{1}$ by $(\Delta^{1})^{n}$, etc; the basic process remains unchanged, but the combinatorics involved in filling the necessary cubes becomes more involved. In \hyperref[ssc:filling_cubes_relative_to_their_boundary]{Subsection~\ref*{ssc:filling_cubes_relative_to_their_boundary}} we explain the combinatorics of filling cubes relative to their boundaries. In \hyperref[ssc:adjunct_data]{Subsection~\ref*{ssc:adjunct_data}}, we give a formalization the concept of adjunct data, and provide a means of for filling partial data to total adjunct data. In \hyperref[ssc:left_kan_implies_globally_left_kan]{Subsection~\ref*{ssc:left_kan_implies_globally_left_kan}}, we show that we can always solve the lifting problems of \hyperref[eq:lifting_problems_for_global_kan_extensions]{Equation~\ref*{eq:lifting_problems_for_global_kan_extensions}}.

\subsection{Basic facts about Kan extensions}
\label{ssc:basic_facts_about_kan_extensions}

In this section, we recall a few basic facts about Kan extensions. These are mostly results found in \cite{kerodon} which we will need. Since Kan extensions are defined by the colimit formula, we will start by defining colimits. Clasically, colimits in $\infty$-categories are defined using colimit cones.

\begin{definition}
  \label{def:colimit_via_cocones}
  Let $F\colon K \to \category{C}$ be a diagram, where $\category{C}$ is an $\infty$-category. A cocone $\tilde{F}\colon K^{\triangleright} \to \category{C}$ is a \defn{colimit cone} if it is an initial object in the $\infty$-category $\category{C}_{F/}$.
\end{definition}

It is sometimes more convenient to define colimits via natural transformations to a constant functor.

\begin{definition}
  Let $f\colon K \to \category{C}$ be a map between simplicial sets, where $\category{C}$ is an $\infty$-category. Let $c \in \category{C}$ be an object, and denote by $\underline{c}$ the constant functor $K \to \category{C}$ with value $c$. A natural transformation $f \Rightarrow \underline{c}$ \defn{exhibits $c$ as the colimit of $f$} if it is an initial object in the $\infty$-category $\category{C}^{F/}$.
\end{definition}

Fortunately, these definitions are compatible: an object $K^{\triangleright} \to \category{C}$ in $\category{C}_{/F}$ is a colimit cone with cone tip $c$ if and only if the composite map
\begin{equation}
  \label{eq:natural_transformation_from_colimit_cone}
  K \times \Delta^{1} \to K \times \Delta^{1} \amalg_{K \times \{1\}} \Delta^{0} \to K \star \Delta^{0} \to \category{C}
\end{equation}
exhibits $c$ as a colimit of $F$. This follows from the fact that for any $\infty$-category $\category{C}$ and any functor $F\colon K \to \category{C}$, the comparison map $\category{C}_{/F} \to \category{C}^{/F}$ of \cite[Prop.~4.2.1.5]{highertopostheory} is a categorical equivalence.

\begin{note}
  It follows immediately from this description that colimit cones are homotopically invariant: if a natural transformation $\eta$ exhibits some object as a colimit of some diagram, than any natural transformation $\eta'$ which is homotopic to $\eta$ will do just as well.
\end{note}

We take a moment to record a definition, which we will need later.

\begin{definition}
  \label{def:preserve_colimits_in_each_slot}
  Let $\category{C}$ be an $\infty$-category, and let $F\colon \category{C} \times \category{C} \to \category{C}$ be a functor. We say that \defn{$F$ preserves colimits in each slot} if and only if the following condition is satisfed:
  \begin{itemize}
    \item Let $K$ and $K'$ be simplicial sets; let $G\colon K \to \category{C}$ and $G'\colon K' \to \category{C}$ be functors; and suppose $\eta\colon G \Rightarrow \underline{c}$ exhibits $c$ as the colimit of $G$, and $\eta'\colon G' \Rightarrow \underline{c'}$ exhibits $c'$ as the colimit of $G'$. Then $F(\eta \times \eta')\colon F \circ (G \times G') \Rightarrow \underline{F(c, c')}$ exhibits $F(c, c')$ as the colimit of $F \circ (G \times G')$.
  \end{itemize}
\end{definition}

\begin{notation}
  \label{notation:rund_um_undercategories}
  Let $f\colon X \to Y$ be a map of simplicial sets, and $y \in Y$ an object. We will use the following notation.
  \begin{itemize}
    \item Denote by $X_{/y}$ the fiber product $X \times_{Y} Y_{/y}$.

    \item Denote by $\pi\colon X_{/y} \to X$ the projection map.

    \item Denote by $\alpha\colon f \circ \pi \Rightarrow \underline{y}$ the natural transformation $X_{/y} \times \Delta^{1} \to X$ coming from the comparison map $X_{/y} \to X^{/y}$.
  \end{itemize}
\end{notation}

\begin{definition}[\protect{\cite[Variant~7.3.1.5]{kerodon}}]
  \label{def:nat_xfo_exhibiting_left_kan_ext}
  Let $X$, $Y$, and $\category{C}$ be $\infty$-categories, $f\colon X \to Y$, $F\colon X \to \category{C}$ and $G\colon Y \to \category{C}$ functors, and $\eta\colon F \Rightarrow G \circ f$ a natural transformation.
  \begin{equation*}
    \begin{tikzcd}
      X
      \arrow[dr, swap, "f"]
      \arrow[rr, "F", ""{below, name=M}]
      && \category{C}
      \\
      & Y
      \arrow[ur, swap, "G"]
      \arrow[from=M, Rightarrow, swap, "\eta"]
    \end{tikzcd}
  \end{equation*}
  We say that $\eta$ \defn{exhibits $G$ as the left Kan extension of $F$ along $f$} if for each object $y \in Y$, the natural transformation $F \circ \pi \Rightarrow \underline{G(y)}$ furnished by the pasting diagram
  \begin{equation*}
    \begin{tikzcd}[row sep=large, column sep=large]
      X_{/y}
      \arrow[r, "\pi", ""{name=LA, swap}]
      \arrow[d, ""{name=LD}]
      & X
      \arrow[d, swap, "f"]
      \arrow[r, "F", ""{below, name=M}]
      & \category{C}
      \\
      \{y\}
      \arrow[r, hook, ""{name=LB}]
      & Y
      \arrow[ur, swap, "G"]
      \arrow[from=M, Rightarrow, shorten=2ex, swap, "\eta"]
      \arrow[from=LA, to=LB, Rightarrow, shorten=1ex, "\alpha"]
    \end{tikzcd}
  \end{equation*}
  exhibits $G(y)$ as the colimit of the functor $F \circ \pi$. Here, $\underline{G(y)}$ is the `counterclockwise' path from $X_{/y}$ to $\category{C}$.
\end{definition}

The following guarantees the existence of local left Kan extensions.
\begin{theorem}[\protect{\cite[Proposition 7.3.5.1]{kerodon}}]
  \label{thm:existence_local_left_kan_exts}
  Suppose that $f\colon X \to Y$ is a map of simplicial sets, and suppose $F\colon X \to \category{C}$ is a map of simplicial sets such that $\category{C}$ is a quasicategory. The functor $F$ admits a left Kan extension along $f$ if and only if for all objects $x \in X$, the colimit of the functor
  \begin{equation*}
    X_{/y} \overset{\pi}{\to} X \overset{F}{\to} \category{C}
  \end{equation*}
  exists in $\category{C}$.
\end{theorem}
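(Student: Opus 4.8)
The ``only if'' direction is immediate from \hyperref[def:nat_xfo_exhibiting_left_kan_ext]{Definition~\ref*{def:nat_xfo_exhibiting_left_kan_ext}}: if $\eta$ exhibits $G$ as the left Kan extension of $F$ along $f$, then for each object $y$ of $Y$ --- these, not the objects of $X$, being the relevant indices --- the pasting transformation appearing there already exhibits $G(y)$ as the colimit of $F\circ\pi\colon X_{/y}\to\category{C}$, so that colimit exists. For the converse, the plan is to reduce to the case of a monomorphism and then carry out a combinatorial construction.

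\emph{Reduction to a monomorphism.} Form the mapping cylinder $\mathcal{M}(f):=(X\times\Delta^{1})\amalg_{X\times\{1\}}Y$, with projection $q\colon\mathcal{M}(f)\to\Delta^{1}$ whose fibre over $0$ is $X\cong X\times\{0\}$ and whose fibre over $1$ is $Y$. The inclusion $\iota\colon X\times\{0\}\hookrightarrow\mathcal{M}(f)$ is a monomorphism whose image is a sieve --- nothing maps into $X\times\{0\}$ from outside it, since nothing maps to $0\in\Delta^{1}$ but $0$ --- so by the pushout description a functor $\widetilde{F}\colon\mathcal{M}(f)\to\category{C}$ with $\widetilde{F}\circ\iota=F$ is precisely a pair $(G\colon Y\to\category{C},\ \eta\colon F\Rightarrow G\circ f)$. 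The crux is the computation of comma objects formed inside $\mathcal{M}(f)$: for $x\in X$ one has $(X\times\{0\})_{/x}\cong X_{/x}$, which has a final object, so the pointwise left-Kan condition for $\widetilde{F}$ is automatic there; while for $y\in Y$ there is a canonical identification $(X\times\{0\})_{/y}\simeq X\times_{Y}Y_{/y}=X_{/y}$, compatible with the projections to $X$ and with the canonical cocones, under which --- using homotopy-invariance of colimit cocones --- the pointwise condition for $\widetilde{F}$ at $y$ is exactly the condition of \hyperref[def:nat_xfo_exhibiting_left_kan_ext]{Definition~\ref*{def:nat_xfo_exhibiting_left_kan_ext}} for $(G,\eta)$. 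Hence $F$ admits a left Kan extension along $f$ if and only if it admits one along $\iota$, and the colimit hypothesis for $f$ is equivalent to the analogous hypothesis for $\iota$ --- automatic at objects of $X$, unchanged at objects of $Y$.

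\emph{The monomorphism case.} It remains to treat a monomorphism $\iota\colon A\hookrightarrow B$ with $F\colon A\to\category{C}$, assuming $\operatorname{colim}\bigl(A_{/b}\xrightarrow{\pi}A\xrightarrow{F}\category{C}\bigr)$ exists for every object $b$ of $B$; one must build $\overline{F}\colon B\to\category{C}$ with $\overline{F}\circ\iota=F$ satisfying the pointwise condition, and since a left Kan extension is unique up to a contractible space of choices when it exists, it suffices to produce one. I would do this by a transfinite induction that attaches the simplices of $B$ not contained in $A$ one at a time, in an order arranged so that each elementary step is a lifting problem against a horn inclusion: most are inner horns $\Lambda^{k}_{i}\hookrightarrow\Delta^{k}$ with $0<i<k$, always solvable since $\category{C}$ is a quasicategory, while the remaining ones are fillings of $\Lambda^{k}_{0}\hookrightarrow\Delta^{k}$ whose solvability is precisely the assumption that $\operatorname{colim}_{A_{/b}}(F\circ\pi)$ exists --- this colimit supplying the value $\overline{F}(b)$ together with the last leg of its defining cocone. (This is the content of the theory of relative left Kan extensions; one standard packaging extends $\overline{F}$ over prisms $\Delta^{n}\times\Delta^{1}$ and decomposes these into shuffles.) Passing to the colimit over all stages, and then reading off $(G,\eta)$ via the reduction above, produces the desired left Kan extension.

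\emph{Main obstacle.} The formal content lies entirely in the reduction; the real work is the monomorphism case, and within it the combinatorial bookkeeping that makes the induction go through --- choosing the attaching order so that the comma objects $A_{/b}$ (which must be computed in the ambient $B$, not in intermediate subcomplexes) and the colimit-cocone conditions are propagated correctly, and verifying that each elementary step really is a horn-filling rather than a harder boundary-filling problem. This is carried out carefully in \cite{kerodon} (Proposition~7.3.5.1 and its supporting lemmas); the above only indicates the shape of the argument.
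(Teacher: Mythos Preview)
The paper does not prove this statement at all: it is recorded as \cite[Proposition~7.3.5.1]{kerodon} and simply cited without argument. Your sketch is therefore not something to compare against a proof in the paper, but it is a faithful outline of the Kerodon argument --- mapping-cylinder reduction to a sieve inclusion, followed by a simplex-by-simplex induction in which the nontrivial steps are outer-horn fillings supplied exactly by the colimit hypothesis. You also correctly flag the typo in the statement (the quantification should be over objects $y\in Y$, not $x\in X$). Since you explicitly defer the combinatorial bookkeeping to the cited reference, there is nothing further to add.
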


The following is a combination of special cases of \cite[Proposition~7.3.1.15]{kerodon} and \cite[Corollary~7.3.1.16]{kerodon}
\begin{example}
  \label{eg:strictly_commuting_left_kan}
  For any homotopy-commuting diagram of simplicial sets
  \begin{equation*}
    \begin{tikzcd}
      X
      \arrow[dr, swap, "f"]
      \arrow[rr, "F", ""{below, name=M}]
      && \category{C}
      \\
      & Y
      \arrow[ur, swap, "G"]
      \arrow[from=M, Rightarrow, "\simeq", "\eta"{swap}]
    \end{tikzcd}
  \end{equation*}
  where $X$, $Y$, and $\category{C}$ are $\infty$-categories and $f\colon X \to Y$ is a categorical equivalence, $\eta$ exhibits $G$ as a left Kan extension of $F$ along $f$.
\end{example}

The following guarantees existence of global left Kan extensions.
\begin{theorem}[\protect{\cite[Corollary~7.3.6.3]{kerodon}}]
  \label{thm:existence_global_left_kan_exts}
  Let $f\colon X \to Y$ be a map of simplicial sets, and suppose that $\category{C}$ is a quasicategory which admits $X_{/y}$-shaped colimits for all $y \in Y$. Then the restriction functor $f^{*}\colon \Fun(Y, \category{C}) \to \Fun(X, \category{C})$ admits a left adjoint $f_{!}$, sending a functor $F\colon X \to \category{C}$ to $f_{!}F\colon Y \to \category{C}$, its left Kan extension along $f$.
\end{theorem}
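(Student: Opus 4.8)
The plan is to carry out, for general $n$, the reduction that was sketched above in the case $n=2$. Once the underlying diagram of $\infty$-categories and functors is fixed, an $(n+1)$-simplex of $\ICCat$ with initial vertex $X$, second vertex $Y$ and terminal vertex $\category{C}$ is the same datum as a map of the $n$-cube $(\Delta^{1})^{n}\to\Fun(X,\category{C})$ — recording the $2$-cells (and higher cells) into $\category{C}$ attached to the chains passing through $X$ — together with a compatible map of the $(n-1)$-cube $(\Delta^{1})^{n-1}\to\Fun(Y,\category{C})$ recording the cells attached to chains starting at $Y$, the two being glued over their overlap by restriction along $f\colon X\to Y$. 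Under this dictionary the horn datum $(\Lambda^{n+1}_{0})_{\flat}\to\ICCat$ with prescribed $\{0,1,n+1\}$-face $\tau$ becomes a map $LC^{n}\to\Fun(X,\category{C})$ out of $\partial(\Delta^{1})^{n}$ with one facet deleted (generalizing the $LC^{2}$ of the $n=2$ discussion) together with a map $\partial(\Delta^{1})^{n-1}\to\Fun(Y,\category{C})$, subject to that compatibility, and the theorem becomes equivalent to filling these two partial cubes to maps out of $(\Delta^{1})^{n}$ and $(\Delta^{1})^{n-1}$. Making this correspondence precise — in particular formalizing when the two partial cubes are to be called \emph{adjunct} — is the business of \hyperref[ssc:adjunct_data]{Subsection~\ref*{ssc:adjunct_data}}, while the elementary combinatorics of the cubes involved is isolated in \hyperref[ssc:filling_cubes_relative_to_their_boundary]{Subsection~\ref*{ssc:filling_cubes_relative_to_their_boundary}}.

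The only genuine input is the universal property of the exhibiting transformation $\eta$. Since $\tau$ is left Kan, $\eta$ exhibits $G$ as the pointwise left Kan extension of $F$ along $f$ in the sense of \hyperref[def:nat_xfo_exhibiting_left_kan_ext]{Definition~\ref*{def:nat_xfo_exhibiting_left_kan_ext}}; consequently the functor $\Fun(Y,\category{C})_{G/}\xrightarrow{\ \sim\ }\Fun(X,\category{C})_{F/}$ given by restriction along $f$ followed by precomposition with $\eta$ is an equivalence of $\infty$-categories. When $\category{C}$ admits the colimits of \hyperref[thm:existence_global_left_kan_exts]{Theorem~\ref*{thm:existence_global_left_kan_exts}} this also follows from the adjunction $f_{!}\dashv f^{*}$, but only the single transformation $\eta$ is ever used, so — in keeping with the statement of the theorem — no hypothesis on the colimits in $\category{C}$ is needed. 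Granting this equivalence, the lifting problem is solved by three moves generalizing steps (1)--(3) above: first, fill the part of the $\Fun(X,\category{C})$-cube obtainable by iterated composition, a purely formal filling relative to the faces already present (the relevant inclusion being inner anodyne); second, observe that the surviving filling problem on the $\Fun(X,\category{C})$-side involves only the object $F$, objects of the form $f^{*}H$, and the designated map $\eta\colon F\to f^{*}G$ — exactly the situation to which the equivalence above applies — so that it is adjunct to a filling problem on the $\Fun(Y,\category{C})$-side involving $G$ and its identity map, and accordingly extend the given $\partial(\Delta^{1})^{n-1}$-datum; third, solve that enlarged $\Fun(Y,\category{C})$-problem, which is now trivial since it is supported on a degenerate face, and transport the solution back across the equivalence, assembling it with the output of the first step into the required $(n+1)$-simplex.

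The main obstacle is not the two ``formal'' moves but the coherence demanded by the middle one: for $n>2$ the adjunct transfer must be performed not on a single square, as for $n=2$, but on an entire partial $n$-cube, so one has to show that \emph{any partial adjunct datum can be completed to a total adjunct datum}, compatibly with the faces inherited from $\Lambda^{n+1}_{0}$ and with the formal fillings of the first and third moves. This is the technical heart of the argument, and is exactly what the notion of adjunct data developed in \hyperref[ssc:adjunct_data]{Subsection~\ref*{ssc:adjunct_data}} is built to handle; the hypothesis that $\tau$ is left Kan enters here and nowhere else, through the equivalence of under-categories recorded above, which is precisely why this hypothesis — rather than $\tau$ being an arbitrary $2$-simplex — is what makes the necessary outer-type fillings possible. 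Assembling all of this, uniformly in $n$, is then carried out in \hyperref[ssc:left_kan_implies_globally_left_kan]{Subsection~\ref*{ssc:left_kan_implies_globally_left_kan}}.
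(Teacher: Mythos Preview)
Your proposal does not address the stated theorem at all. The statement you were asked to prove is \hyperref[thm:existence_global_left_kan_exts]{Theorem~\ref*{thm:existence_global_left_kan_exts}}: that under suitable colimit hypotheses on $\category{C}$, the restriction functor $f^{*}\colon \Fun(Y,\category{C})\to\Fun(X,\category{C})$ admits a left adjoint $f_{!}$. The paper does not prove this result; it is quoted verbatim from \cite[Corollary~7.3.6.3]{kerodon} and used as a black box. What you have written is instead a proof sketch of \hyperref[thm:left_kan_implies_globally_left_kan]{Theorem~\ref*{thm:left_kan_implies_globally_left_kan}}, the horn-filling statement that is the main goal of the section. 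You even cite \hyperref[thm:existence_global_left_kan_exts]{Theorem~\ref*{thm:existence_global_left_kan_exts}} as an input (``When $\category{C}$ admits the colimits of Theorem~\ref*{thm:existence_global_left_kan_exts}\dots''), so your argument cannot possibly be a proof of it.

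If your intent was to sketch the proof of \hyperref[thm:left_kan_implies_globally_left_kan]{Theorem~\ref*{thm:left_kan_implies_globally_left_kan}}, then your outline is broadly faithful to the paper's strategy: translate the horn-filling problem in $\ICCat$ via $\C\dashv N$ into cube-filling problems in mapping spaces, use the inner-anodyne part of \hyperref[prop:cube_filling]{Proposition~\ref*{prop:cube_filling}} for the ``composition'' step, transport the remaining problem across the partial adjunction using \hyperref[prop:can_transport_adjunct_data_partial]{Proposition~\ref*{prop:can_transport_adjunct_data_partial}}, and solve it on the $\Fun(Y,\category{C})$ side via the marked-anodyne part of \hyperref[prop:cube_filling]{Proposition~\ref*{prop:cube_filling}} because the unit becomes an equivalence there. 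One point of divergence: you phrase the key input as an equivalence of under-categories $\Fun(Y,\category{C})_{G/}\simeq\Fun(X,\category{C})_{F/}$, whereas the paper works instead with the existence of a partial left adjoint to $f^{*}$ at the single object $F$ (\hyperref[eg:partial_adjunctions_and_kan_extensions]{Example~\ref*{eg:partial_adjunctions_and_kan_extensions}}) and the machinery of adjunct data relative to a map $s\colon K\to\Delta^{1}$. These are equivalent formulations of the same universal property, but the paper's packaging is what makes the transport step (\hyperref[prop:can_transport_adjunct_data_partial]{Proposition~\ref*{prop:can_transport_adjunct_data_partial}}) go through cleanly, so if you pursue the under-category phrasing you will still need to reproduce that lemma or something equivalent.
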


The following is an easy consequence of \cite[Remark~7.3.1.11]{kerodon}, to be explained later.
\begin{proposition}
  Let $f\colon X \to Y$ be a map of simplicial sets, and let $\category{C}$ be a category such that all functors $X \to \category{C}$ admit left Kan extensions along $f$, so that we have an adjunction $f_{!} \vdash f^{*}$. Let $G\colon Y \to \category{C}$ and $\eta\colon F \Rightarrow G \circ f$. The natural transformation $\eta$ exhibits $G$ as a left Kan extension of $F$ along $f$ if and only if it is adjunct (in the sense of \hyperref[def:adjunct_data]{Definition~\ref*{def:adjunct_data}}) to an equivalence in $\Fun(Y, \category{C})$ relative to $s = \id_{\Delta^{1}}$.
\end{proposition}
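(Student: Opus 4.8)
The plan is to reduce the proposition to two facts about the adjunction $f_{!}\dashv f^{*}$. Write $u\colon \id \Rightarrow f^{*}f_{!}$ for its unit, so that for each $F\colon X \to \category{C}$ we obtain a natural transformation $u_{F}\colon F \Rightarrow (f_{!}F)\circ f$. The first fact is that $u_{F}$ itself exhibits $f_{!}F$ as the left Kan extension of $F$ along $f$; this is built into the construction of the adjoint $f_{!}$ underlying \hyperref[thm:existence_global_left_kan_exts]{Theorem~\ref*{thm:existence_global_left_kan_exts}} (cf.\ \cite[\S7.3.6]{kerodon}). The second is a rigidity property of left-Kan-exhibiting transformations: if $\eta_{0}\colon F \Rightarrow G_{0}\circ f$ exhibits $G_{0}$ as the left Kan extension of $F$ along $f$ and $\gamma\colon G_{0}\Rightarrow G$ is \emph{any} natural transformation, then $f^{*}(\gamma)\circ\eta_{0}$ exhibits $G$ as the left Kan extension of $F$ along $f$ if and only if $\gamma$ is an equivalence. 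This is the ``easy consequence of \cite[Remark~7.3.1.11]{kerodon}'' referred to above: in the colimit description of \hyperref[def:nat_xfo_exhibiting_left_kan_ext]{Definition~\ref*{def:nat_xfo_exhibiting_left_kan_ext}}, for each $y \in Y$ the exhibiting transformation attached to $f^{*}(\gamma)\circ\eta_{0}$ is the one attached to $\eta_{0}$ postcomposed with $\gamma_{y}\colon G_{0}(y)\to G(y)$, and such a postcomposition carries a colimit cone to a colimit cone precisely when $\gamma_{y}$ is an equivalence (a map compatible with two colimit presentations of the same diagram being automatically an equivalence); since equivalences in $\Fun(Y,\category{C})$ are detected pointwise, this is equivalent to $\gamma$ being an equivalence.

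Next I would unwind the adjunct condition. Specializing \hyperref[def:adjunct_data]{Definition~\ref*{def:adjunct_data}} to the shape $s = \id_{\Delta^{1}}$, to say that $\eta$ is adjunct to a $1$-simplex $\bar\eta$ of $\Fun(Y,\category{C})$ is to say that $\bar\eta$ is a natural transformation with source $f_{!}F$ and target $G$ satisfying $f^{*}(\bar\eta)\circ u_{F}\simeq\eta$ in $\Fun(X,\category{C})$ --- equivalently $\bar\eta\simeq\epsilon_{G}\circ f_{!}(\eta)$, where $\epsilon$ is the counit --- i.e.\ $\bar\eta$ is the classical adjunct of $\eta$ under $f_{!}\dashv f^{*}$. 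In particular the endpoints of $\bar\eta$ are $f_{!}F$ and $G$, so the assertion that $\bar\eta$ is an equivalence in $\Fun(Y,\category{C})$ says exactly that $\bar\eta$ witnesses an equivalence $f_{!}F\simeq G$.

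Finally, I would apply the rigidity property with $G_{0}=f_{!}F$, $\eta_{0}=u_{F}$, and $\gamma=\bar\eta$, using $f^{*}(\bar\eta)\circ u_{F}\simeq\eta$ together with the homotopy-invariance of left-Kan-exhibiting transformations (the note following \hyperref[eq:natural_transformation_from_colimit_cone]{Equation~\ref*{eq:natural_transformation_from_colimit_cone}}, applied at each $y\in Y$): then $\eta$ exhibits $G$ as a left Kan extension if and only if $f^{*}(\bar\eta)\circ u_{F}$ does, if and only if $\bar\eta$ is an equivalence. I expect the remaining work to be bookkeeping rather than mathematics: one must confirm that the general notion of ``adjunct data relative to $s$'' from \hyperref[def:adjunct_data]{Definition~\ref*{def:adjunct_data}}, specialized to $s=\id_{\Delta^{1}}$, really reduces to the triangle-identity description of $\bar\eta$ used here, and that the unit of the adjunction produced by \hyperref[thm:existence_global_left_kan_exts]{Theorem~\ref*{thm:existence_global_left_kan_exts}} is the natural transformation locally exhibiting left Kan extensions --- both standard, but needing care to state coherently.
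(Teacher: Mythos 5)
Your proof is correct. A point worth flagging first: the paper never supplies its own argument here --- it only asserts this is an ``easy consequence'' of a kerodon remark, ``to be explained later,'' and the promised explanation never appears --- so there is no paper proof to compare against. That said, your proof is exactly the expansion one would expect from that citation. The three pillars you lean on are all sound: (i) the unit $u_{F}\colon F \Rightarrow f^{*}f_{!}F$ of the adjunction produced by \hyperref[thm:existence_global_left_kan_exts]{Theorem~\ref*{thm:existence_global_left_kan_exts}} exhibits $f_{!}F$ as the left Kan extension; (ii) the rigidity statement that $f^{*}(\gamma)\circ\eta_{0}$ is a left-Kan witness iff $\gamma$ is an equivalence; and (iii) specializing \hyperref[def:adjunct_data]{Definition~\ref*{def:adjunct_data}} to $K=\Delta^{1}$, $s=\id$ recovers the classical adjunct. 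For (iii), the square $A\colon\Delta^{1}\times\Delta^{1}\to\category{M}$ with cocartesian left edge $F\to f_{!}F$, cartesian right edge $G\circ f\to G$, top $\eta$ and bottom $\bar\eta$ indeed enforces $f^{*}(\bar\eta)\circ u_{F}\simeq\eta$, so your unwinding is right. The one place where the ``bookkeeping'' you defer actually has some content is the backward direction of (ii): you need to know that the pointwise colimit cone attached (via \hyperref[def:nat_xfo_exhibiting_left_kan_ext]{Definition~\ref*{def:nat_xfo_exhibiting_left_kan_ext}}) to $f^{*}(\gamma)\circ\eta_{0}$ is, up to coherent homotopy, the postcomposition of the one attached to $\eta_{0}$ by $\gamma_{y}$, and then that the projection $\category{C}^{F\pi/}\to\category{C}$ (a left fibration, hence conservative) carries the induced map of initial objects to $\gamma_{y}$. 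Spelling out that compatibility of the pasting diagram with postcomposition is the only nontrivial detail you elide; everything else is as routine as you say.
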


\begin{proposition}[\protect{\cite[Proposition 7.3.7.18]{kerodon}}]
  \label{prop:kan_extend_along_composition}
  Consider $\infty$-categories and functors
  \begin{equation*}
    \begin{tikzcd}
      X
      \arrow[dr, swap, "f"]
      \arrow[rrrr, "F"]
      &&&& \category{C}
      \\
      & Y
      \arrow[urrr, "G"]
      \arrow[dr, swap, "g"]
      \\
      && Z
      \arrow[uurr, swap, "H"]
    \end{tikzcd},
  \end{equation*}
  and natural transformations $\alpha\colon F \Rightarrow G \circ f$ and $\beta\colon H \circ g \Rightarrow G$, and suppose that $\alpha$ exhibits $G$ as the left Kan extension of $F$ along $f$. Then $\beta$ exhibits $H$ as the left Kan extension of $G$ along $g$ if and only if $\beta f \circ \alpha$ exhibits $H$ as the left Kan extension of $F$ along $g \circ f$.
\end{proposition}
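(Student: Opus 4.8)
The plan is to verify the biconditional objectwise over $Z$, working directly from the pointwise characterisation of \ref{def:nat_xfo_exhibiting_left_kan_ext}. (Here $\beta$ should be read as a natural transformation $G \Rightarrow H \circ g$, so that the composite $\beta f \circ \alpha\colon F \Rightarrow H \circ g \circ f$ is defined.) Fix $z \in Z$, and write $X_{/z} = X \times_{Z} Z_{/z}$ and $Y_{/z} = Y \times_{Z} Z_{/z}$ for the slices formed along $g \circ f$ and along $g$, with projections $\pi_{X}\colon X_{/z} \to X$ and $\pi_{Y}\colon Y_{/z} \to Y$ as in \ref{notation:rund_um_undercategories}; the functor $f$ induces a map $\hat{f}\colon X_{/z} \to Y_{/z}$ with $\pi_{Y} \circ \hat{f} = f \circ \pi_{X}$. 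Unwinding \ref{def:nat_xfo_exhibiting_left_kan_ext}, the statement that $\beta$ exhibits $H$ as the left Kan extension of $G$ along $g$ asserts, at this $z$, that a certain canonical cocone exhibits $H(z)$ as a colimit of $G \circ \pi_{Y}$ on $Y_{/z}$, while the statement that $\beta f \circ \alpha$ exhibits $H$ as the left Kan extension of $F$ along $g \circ f$ asserts, at this $z$, that a certain canonical cocone exhibits $H(z)$ as a colimit of $F \circ \pi_{X}$ on $X_{/z}$. It therefore suffices to prove that, for each fixed $z$, these two conditions are equivalent and the two cocones correspond.

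The first ingredient is a base-change statement: since $\alpha$ exhibits $G$ as the left Kan extension of $F$ along $f$, the restricted natural transformation $\hat{\alpha}\colon F \circ \pi_{X} \Rightarrow (G \circ \pi_{Y}) \circ \hat{f}$ exhibits $G \circ \pi_{Y}$ as the left Kan extension of $F \circ \pi_{X}$ along $\hat{f}$. To see this, fix $w \in Y_{/z}$ lying over $y \in Y$. A ``slice of a slice'' computation --- using that $Z_{/z} \to Z$, hence also $X_{/z} \to X$, is a right fibration --- shows that the canonical comparison map $(X_{/z})_{/w} \to X_{/y}$ (the source slice formed along $\hat{f}$, the target slice along $f$) is a categorical equivalence, under which $(F \circ \pi_{X}) \circ (\text{projection})$ corresponds to $F \circ \pi\colon X_{/y} \to X \to \category{C}$ and the canonical cocone at $w$ corresponds to the canonical cocone at $y$. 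By \ref{eg:strictly_commuting_left_kan} and the hypothesis on $\alpha$, the cocone at $y$ exhibits $G(y) = (G \circ \pi_{Y})(w)$ as a colimit; hence so does the cocone at $w$, which is exactly the pointwise condition for $\hat{\alpha}$ at $w$.

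The second ingredient is that postcomposing a left Kan extension with the formation of a colimit recovers the colimit of the original diagram: given $u\colon A \to B$ and $\psi\colon \Phi \Rightarrow \Psi \circ u$ exhibiting $\Psi$ as the left Kan extension of $\Phi$ along $u$, a cocone $\gamma\colon \Psi \Rightarrow \underline{c}$ exhibits $c$ as a colimit of $\Psi$ if and only if $(\gamma u) \circ \psi$ exhibits $c$ as a colimit of $\Phi$. This is the case $Z = \Delta^{0}$ of the present proposition, but it admits a proof independent of the general case: setting $E = A \times_{B} \Fun(\Delta^{1}, B)$ (fibre product along $u$ and evaluation at $0$), the map $E \to B$ given by evaluation at $1$ is a coCartesian fibration with fibre $A_{/b}$ over $b$, so the ``colimit in stages'' formula identifies a colimit of $\Psi$ over $B$ with a colimit of $\Phi \circ r$ over $E$, where $r\colon E \to A$ is the projection; and $r$ admits a left adjoint, sending $a$ to the degenerate edge at $u(a)$, hence is cofinal, so a colimit of $\Phi \circ r$ is a colimit of $\Phi$. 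Applying this ingredient with $A = X_{/z}$, $B = Y_{/z}$, $u = \hat{f}$, $\Phi = F \circ \pi_{X}$, $\Psi = G \circ \pi_{Y}$, $\psi = \hat{\alpha}$ and $c = H(z)$: the colimit condition on $Y_{/z}$ attached to $\beta$ holds if and only if the colimit condition on $X_{/z}$ attached to the cocone obtained by pasting $\hat{\alpha}$ onto the $\hat{f}$-restriction of the $\beta$-cocone holds. Since that pasted cocone is the one that \ref{def:nat_xfo_exhibiting_left_kan_ext} attaches to $\beta f \circ \alpha$ at $z$, quantifying over $z$ completes the argument.

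I expect the main obstacle to be exactly that last identification --- showing that the cocone on $X_{/z}$ obtained by pasting $\hat{\alpha}$ onto the restricted $\beta$-cocone is literally the cocone attached to $\beta f \circ \alpha$ at $z$ by \ref{def:nat_xfo_exhibiting_left_kan_ext}. This reduces to tracking the pasting $2$-cells and the comparison maps of the form $X_{/z} \to X^{/z}$ through the various slices; it is bookkeeping rather than substantive mathematics, but it is where care is needed, and the same bookkeeping reappears (less conspicuously) in the ``cocone at $w$ corresponds to the cocone at $y$'' clause of the base-change step. A secondary nuisance is that $\category{C}$ is not assumed cocomplete, so in each direction one transports the assertion ``the relevant colimit exists and equals $H(z)$'' rather than comparing previously constructed objects; it suffices to phrase the two ingredients above so that each carries this existence clause in both directions.
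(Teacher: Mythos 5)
The paper states this proposition as a citation to Kerodon and supplies no proof of its own, so there is nothing in-paper to compare your argument against; I evaluate it on its merits. It is correct in outline and all three ingredients are sound. The pointwise reduction is exactly what \ref{def:nat_xfo_exhibiting_left_kan_ext} licenses. For the base-change step, the slice-of-slice comparison $(X_{/z})_{/w} \to X_{/y}$ really is a trivial fibration: it is the pullback along $X \to Y$ of $(Y_{/z})_{/w} \to Y_{/y}$, and that map is a trivial fibration because $Y_{/z} \to Y$ is a right fibration (being the base change of $Z_{/z} \to Z$ along $g$). For the ``colimit of a left Kan extension'' lemma, your comma-category argument works: for $E = A \times_{B} \Fun(\Delta^1, B)$ (fibre product along $u$ and evaluation at $0$), evaluation at $1$ is a coCartesian fibration with fibre over $b$ equivalent to $A_{/b}$; the functor $a \mapsto (a, \mathrm{id}_{u(a)})$ is left adjoint to the projection $r\colon E \to A$, so $r$ is cofinal; and the colimit-in-stages theorem then identifies the colimit of $\Psi$ over $B$ with the colimit of $\Phi \circ r$ over $E$, which by cofinality of $r$ is the colimit of $\Phi$ over $A$, all carrying the ``exists iff exists'' clause you want. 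You also correctly read past the apparent typo in the direction of $\beta$. The one gap that must still be closed is the one you flag yourself: the pasted cocone on $X_{/z}$ obtained from the $\hat f$-restriction of the $\beta$-cocone and $\hat\alpha$ needs to be identified, up to homotopy, with the cocone that \ref{def:nat_xfo_exhibiting_left_kan_ext} attaches to $\beta f \circ \alpha$ at $z$; by \ref{prop:homotopy_invariance_of_witness} a homotopy suffices, which keeps the bookkeeping routine, but it does have to be written out for the argument to be complete.
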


\begin{proposition}[\protect{\cite[Remark 7.3.1.12]{kerodon}}]
  \label{prop:kan_ext_invariance_of_target}
  Suppose $H\colon \category{C} \to \category{D}$ is an equivalence of categories, $f\colon X \to Y$ is a map of simplicial sets, $G\colon Y \to \category{C}$ is a functor, and $\eta\colon F \Rightarrow G \circ f$ is a natural transformation. Then $\eta$ exhibits $G$ as a left Kan extension of $F$ along $f$ if and only if $H\eta\colon H \circ F \Rightarrow H \circ G \circ f$ exhibits $H \circ G$ as a left Kan extension of $H \circ F$ along $f$.
\end{proposition}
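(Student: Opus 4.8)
The plan is to deduce the proposition from the pointwise Definition~\ref{def:nat_xfo_exhibiting_left_kan_ext}, reducing it to the single fact that an equivalence of $\infty$-categories preserves and reflects colimits. First I would fix $y \in Y$ and write $\pi\colon X_{/y} \to X$, $\alpha\colon f \circ \pi \Rightarrow \underline{y}$ for the data of Notation~\ref{notation:rund_um_undercategories}. The pasting diagram in Definition~\ref{def:nat_xfo_exhibiting_left_kan_ext} assembles the natural transformation $(G\alpha) \circ (\eta\pi)\colon F \circ \pi \Rightarrow \underline{G(y)}$, and $\eta$ is left Kan precisely when this transformation exhibits $G(y)$ as the colimit of $F \circ \pi$ for every $y$. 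Post-composition with $H$ is implemented, on simplicial sets, by the functors $H_* \colon \Fun(W, \category{C}) \to \Fun(W, \category{D})$; these commute strictly with whiskering, compatibly (up to the homotopy inherent in forming vertical composites) with vertical composition, and carry $\underline{G(y)} = G \circ \underline{y}$ to $\underline{H(G(y))} = (H\circ G)\circ\underline{y}$. Hence applying $H_*$ to the pasting diagram for $\eta$ produces exactly the pasting diagram for $H\eta$: the resulting transformation is $\bigl((H\circ G)\alpha\bigr)\circ\bigl((H\eta)\pi\bigr) = H_*\bigl((G\alpha)\circ(\eta\pi)\bigr)\colon H \circ F \circ \pi \Rightarrow \underline{H(G(y))}$. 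By homotopy-invariance of colimit cones (recalled above) these mild choices are harmless, so the proposition reduces to the claim that for any $A\colon K \to \category{C}$, object $c$, and $\zeta\colon A \Rightarrow \underline{c}$, the transformation $\zeta$ exhibits $c$ as the colimit of $A$ if and only if $H\zeta$ exhibits $H(c)$ as the colimit of $H \circ A$.

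To prove this claim I would pass through the categorical equivalence of \cite[Prop.~4.2.1.5]{highertopostheory} recalled after Definition~\ref{def:colimit_via_cocones} (in its undercategory form): $\zeta$ exhibits $c$ as a colimit of $A$ iff the corresponding cocone $\bar{A}\colon K^{\triangleright}\to\category{C}$ is an initial object of $\category{C}_{A/}$. Post-composition with $H$ induces a functor $\category{C}_{A/}\to\category{D}_{HA/}$ lying over $H$, carrying $\bar{A}$ to the cocone corresponding to $H\zeta$. Since equivalences preserve and reflect initial objects, it then suffices to check that this functor is an equivalence; I would argue this from the facts that $\category{C}_{A/}\to\category{C}$ and $\category{D}_{HA/}\to\category{D}$ are left fibrations, that the functor lies over the equivalence $H$, and that it is a fiberwise equivalence — its action on a fiber over a point is the map of cocone mapping spaces induced by the fully faithful $H_*\colon\Fun(K,\category{C})\to\Fun(K,\category{D})$ — together with the standard fact that a fiberwise equivalence of left fibrations lying over an equivalence of bases is itself an equivalence.

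I expect the only point requiring genuine care to be this last one — that post-composition with $H$ induces an equivalence $\category{C}_{A/}\to\category{D}_{HA/}$ — which is exactly where fully-faithfulness and essential surjectivity of $H$ enter; the rest is bookkeeping. One can sidestep the fibration language by instead observing that $\zeta$ exhibits $c$ as a colimit iff, for every object $c'$, precomposition with $\zeta$ is a homotopy equivalence from the mapping space of $\category{C}$ from $c$ to $c'$ onto the space of cocones under $A$ with cone tip $c'$, and then deducing the corresponding statement in $\category{D}$ from essential surjectivity of $H$ and fully-faithfulness of $H$ and of $H_*$ by a two-out-of-three argument; in any case one could also simply cite \cite[Remark~7.3.1.12]{kerodon} for the claim.
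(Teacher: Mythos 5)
The paper gives no proof of its own for this statement: it is simply imported as \cite[Remark~7.3.1.12]{kerodon}, so there is no in-paper argument to compare against. Your proposal is a correct expansion of the underlying fact and follows the route one would expect: reduce to the pointwise criterion of \hyperref[def:nat_xfo_exhibiting_left_kan_ext]{Definition~\ref*{def:nat_xfo_exhibiting_left_kan_ext}}, observe that applying $H_{*}$ to the pasting diagram witnessing $\eta$ yields (up to the homotopy issues absorbed by \hyperref[prop:homotopy_invariance_of_witness]{Proposition~\ref*{prop:homotopy_invariance_of_witness}} and the note on homotopy invariance of colimit cones) the pasting diagram witnessing $H\eta$, and then reduce everything to the claim that an equivalence of $\infty$-categories preserves and reflects colimit cones. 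Your proof of that last claim via initial objects in $\category{C}_{A/}$ is sound; the one load-bearing step is the assertion that a map of left fibrations over an equivalence of bases which is a fiberwise equivalence is itself an equivalence, and this is where the equivalence hypothesis on $H$ is genuinely used (both full faithfulness, to get the fiberwise equivalence via $H_{*}$, and essential surjectivity, to get the base equivalence). You are right that this is the only point requiring real care, and your alternative via mapping-space universal properties and two-out-of-three is an equally legitimate way to land it. Since the paper merely cites, your last-sentence fallback of citing \cite[Remark~7.3.1.12]{kerodon} is in fact exactly what the paper does; the rest of your write-up supplies the proof the paper omits.
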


\begin{proposition}[\protect{\cite[Remark 7.3.1.9]{kerodon}}]
  \label{prop:homotopy_invariance_of_witness}
  The property that $\eta$ exhibits $G$ as a left Kan extension of $F$ along $f$ is homotopy-invariant; any homotopic $\eta$ will do just as well.
\end{proposition}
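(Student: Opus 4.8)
The plan is to reduce the claim to the homotopy invariance of colimit cones already recorded in the Note following Equation~\ref{eq:natural_transformation_from_colimit_cone}, by unwinding the pointwise description of left Kan extensions given in Definition~\ref{def:nat_xfo_exhibiting_left_kan_ext}. So suppose $\eta'\colon F \Rightarrow G \circ f$ is homotopic to $\eta$, and that $\eta$ exhibits $G$ as the left Kan extension of $F$ along $f$. Fix an object $y \in Y$, and write $\eta_{y}$, respectively $\eta'_{y}$, for the natural transformation $F \circ \pi \Rightarrow \underline{G(y)}$ obtained by the pasting construction of Definition~\ref{def:nat_xfo_exhibiting_left_kan_ext} applied to $\eta$, respectively $\eta'$, together with the canonical $\alpha\colon f \circ \pi \Rightarrow \underline{y}$ of Notation~\ref{notation:rund_um_undercategories}. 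By hypothesis $\eta_{y}$ exhibits $G(y)$ as the colimit of $F \circ \pi$, and by the same definition it will suffice to show the same for $\eta'_{y}$: once this is done, $y$ being arbitrary, we conclude that $\eta'$ exhibits $G$ as the left Kan extension of $F$ along $f$.

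First I would observe that the passage $\eta \mapsto \eta_{y}$ is assembled entirely from operations internal to functor $\infty$-categories: whiskering on the right by $\pi$, which is just the restriction functor $\pi^{*}\colon \Fun(X,\category{C}) \to \Fun(X_{/y},\category{C})$, followed by composing in $\Fun(X_{/y},\category{C})$ with the fixed $1$-simplex $G\alpha\colon G \circ f \circ \pi \Rightarrow \underline{G(y)}$. Both operations carry homotopic $1$-simplices to homotopic $1$-simplices: $\pi^{*}$ is a functor of $\infty$-categories, so it sends a $2$-simplex of $\Fun(X,\category{C})$ witnessing $\eta \simeq \eta'$ to one witnessing $\pi^{*}\eta \simeq \pi^{*}\eta'$; and composition with the fixed edge $G\alpha$ is a well-defined operation on mapping spaces, so applying it (concretely: forming the relevant inner horn in $\Fun(X_{/y},\category{C})$ and filling) produces a $2$-simplex witnessing $\eta_{y} \simeq \eta'_{y}$. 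Hence $\eta_{y}$ and $\eta'_{y}$ are homotopic natural transformations $F \circ \pi \Rightarrow \underline{G(y)}$. This middle step is the only point requiring care, and to avoid juggling explicit horn fillers I would phrase it invariantly: by the comparison equivalence of \cite[Prop.~4.2.1.5]{highertopostheory} the condition ``$\eta_{y}$ exhibits $G(y)$ as a colimit'' is exactly initiality in the slice $\category{C}^{F\pi/}$, and a homotopy $\eta_{y} \simeq \eta'_{y}$ exhibits the corresponding cones as equivalent objects of that slice.

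It then remains only to invoke the homotopy invariance of colimit cones: since $\eta_{y}$ exhibits $G(y)$ as a colimit of $F \circ \pi$ and $\eta'_{y}$ is homotopic to it — equivalently, is an equivalent object of $\category{C}^{F\pi/}$, and initial objects are preserved under equivalence — the transformation $\eta'_{y}$ likewise exhibits $G(y)$ as a colimit of $F \circ \pi$. This completes the argument. Of course, one may alternatively dispense with this reasoning and cite \cite[Remark~7.3.1.9]{kerodon} directly, of which the proposition is a restatement.
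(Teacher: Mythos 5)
Your argument is correct. The paper gives no internal proof for this proposition — it is stated with a bare citation to \cite[Remark~7.3.1.9]{kerodon} — so there is nothing in the text to compare against, but your route is exactly the one that remark is implicitly appealing to: unwind the pointwise Definition~\ref{def:nat_xfo_exhibiting_left_kan_ext}, observe that the assignment $\eta \mapsto \eta_{y}$ (restrict along $\pi$ via the functor $\pi^{*}$, then compose with the fixed $G\alpha$) sends homotopic natural transformations to homotopic ones, and conclude by the homotopy invariance of colimit cones recorded in the Note after Equation~\ref{eq:natural_transformation_from_colimit_cone}. One minor point: the appeal to \cite[Prop.~4.2.1.5]{highertopostheory} is superfluous, since in this paper ``exhibits $G(y)$ as a colimit'' is \emph{defined} to mean initiality in $\category{C}^{F\pi/}$, so no comparison map is needed to make that identification; the homotopy-invariance Note, which you do cite at the end, is the only external input required.
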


\subsection{Filling cubes relative to their boundaries}
\label{ssc:filling_cubes_relative_to_their_boundary}

Our main goal in this \hyperref[sec:horn_filling_via_left_kan_extensions]{Section~\ref*{sec:horn_filling_via_left_kan_extensions}} is to prove \hyperref[thm:left_kan_implies_globally_left_kan]{Theorem~\ref*{thm:left_kan_implies_globally_left_kan}}. To do this, we must understand lifting problems of the form
\begin{equation*}
  \begin{tikzcd}
    (\Lambda^{n+1}_{0})_{\flat}
    \arrow[r]
    \arrow[d, hook]
    & \ICCat
    \\
    \Delta^{n+1}_{\flat}
    \arrow[ur, dashed]
  \end{tikzcd}.
\end{equation*}

The category $\ICCat$ is defined to be the homotopy coherent nerve of the $\SSetmk$-enriched category $\QCat$ of quasicategories. Therefore, solving such lifting problems is equivalent to solving the adjunct lifting problems
\begin{equation*}
  \begin{tikzcd}
    \C[\Lambda^{n+1}_{0}]
    \arrow[r]
    \arrow[d, hook]
    & \QCat
    \\
    \C[\Delta^{n+1}]
    \arrow[ur, dashed]
  \end{tikzcd}
\end{equation*}
where the diagram in question is now a diagram of $\SSetmk$-enriched categories (where the mapping spaces of $\C[\Lambda^{n+1}_{0}]$ and $\C[\Delta^{n+1}]$ are taken to carry the flat marking). In order to solve such lifting problems, we need to fill the missing data in each mapping space. As we will see in \hyperref[ssc:left_kan_implies_globally_left_kan]{Subsection~\ref*{ssc:left_kan_implies_globally_left_kan}}, these fillings take the form of filling a cube $(\Delta^{1})^{n}$ relative to its boundary missing one face.

In this subsection, we give the combinatorics of filling the $n$-cube. Our main result is \hyperref[prop:cube_filling]{Proposition~\ref*{prop:cube_filling}}, which writes the inclusion the boundary of the $n$-cube missing a certain face into the full cube as a composition of an inner anodyne map and a marked anodyne map.

\begin{definition}
  The \defn{$n$-cube} is the simplicial set $C^{n} := (\Delta^{1})^{n}$.
\end{definition}

\begin{note}
  We will consider the the factors $\Delta^{1}$ of $C^{n}$ to be ordered, so that we can speak about the first factor, the second factor, etc.
\end{note}

Our first step is to understand the nondegenerate simplices of the $n$-cube.

\begin{definition}
  Denote by $a^{i}\colon \Delta^{n} \to \Delta^{1}$ the map which sends $\Delta^{\{0, \ldots, i-1\}}$ to $\{0\}$, and $\Delta^{\{i, \ldots, n\}}$ to $\{1\}$.
\end{definition}

\begin{note}
  The superscript of $a^{i}$ counts how many vertices of $\Delta^{n}$ are sent to $\Delta^{\{0\}} \subset \Delta^{1}$.
\end{note}

Every nondegenerate simplex $\Delta^{n} \to C^{n}$ can be specified by giving a walk along the edges of $C^{n}$ starting at $(0, \ldots, 0)$ and ending at $(1, \ldots, 1)$, and any simplex specified in this way is nondegenerate. We now use this to define a bijection between $S_{n}$, the symmetric group on the set $\{1, \ldots, n\}$, and the nondegenerate simplices $\Delta^{n} \to C^{n}$ as follows.

\begin{definition}
  For any permutation $\tau\colon \{1, \ldots, n\} \to \{1, \ldots, n\}$, we define an $n$-simplex
  \begin{equation*}
    \phi(\tau)\colon \Delta^{n} \to C^{n};\qquad \phi(\tau)_{i} = a^{\tau(i)}.
  \end{equation*}
  That is,
  \begin{equation*}
    \phi(\tau) = (a^{\tau(1)}, \ldots, a^{\tau(n)}).
  \end{equation*}
\end{definition}

\begin{note}
  It is easy to check that the above definition really results in a bijection between $S_{n}$ and the nondegenerate simplices of $C^{n}$.
\end{note}

\begin{notation}
  We will denote any permutatation $\tau\colon \{1, \ldots, n\} \to \{1, \ldots, n\}$ by the corresponding $n$-tuple $(\tau(1), \ldots, \tau(n))$. Thus, the identity permutation is $(1, \ldots, n)$, and the permutation $\gamma_{1,2}$ which swaps $1$ and $2$ is $(2, 1, 3, \ldots, n)$.
\end{notation}

Given a permutation $\tau \in S_{n}$ corresponding to an $n$-simplex
\begin{equation*}
  \phi(\tau) = (a^{\tau(1)}, \ldots, a^{\tau(n)})\colon \Delta^{n} \to C^{n},
\end{equation*}
the $i$th face of $\phi(\tau)$ is a nondegenerate $(n-1)$-simplex in $C^{n}$, i.e.\ a map $\Delta^{n-1} \to C^{n}$. We calculate this as follows: for any $a^{i}\colon \Delta^{n} \to \Delta^{1}$ and any face map $\partial_{j}\colon \Delta^{n-1} \to \Delta^{n}$, we note that
\begin{equation*}
  \partial_{j}^{*}a^{i} =
  \begin{cases}
    a^{i-1}, & j < i \\
    a^{i}, & j \geq i
  \end{cases},
\end{equation*}
where by minor abuse of notation we denote the map $\Delta^{n-1} \to \Delta^{1}$ sending $\Delta^{\{0, \ldots i-1\}}$ to $\{0\}$ and the rest to $\{1\}$ also by $a^{i}$. We then have that
\begin{gather*}
  \label{eq:simplicial_identity_for_cubes}
  d_{i}\tau = d_{i}(a^{\tau(1)}, \ldots, a^{\tau(n)}) := (\partial_{i}^{*} a^{\tau(1)}, \ldots, \partial_{i}^{*} a^{\tau(n)}).
\end{gather*}

\begin{example}
  Consider the 3-simplex $(a^{1}, a^{2}, a^{3})$ in $C^{3}$. This has spine
  \begin{equation*}
    (0, 0, 0) \to (1, 0, 0) \to (1, 1, 0) \to (1, 1, 1),
  \end{equation*}
  as is easy to see quickly:
  \begin{itemize}
    \item The function $\Delta^{3} \to \Delta^{1}$ corresponding to the first coordinate is $a^{1}$, so the first coordinate of the first vertex is 0, and the first coordinate of the rest of the vertices are 1.

    \item The function $\Delta^{3} \to \Delta^{1}$ corresponding to the second coordinate is $a^{2}$, so the second coordinates of the first two points are equal to zero, and the second coordinate of the remaining vertices is 1.

    \item The function $\Delta^{3} \to \Delta^{1}$ corresponding to the third coordinate is $a^{3}$, so the third coordinates of the first three points is equal to zero, and the second coordinate of the final vertex is equal to 1.
  \end{itemize}
  Using \hyperref[eq:simplicial_identity_for_cubes]{Equation~\ref*{eq:simplicial_identity_for_cubes}}, we then calculate that
  \begin{equation*}
    d_{2} (a^{1}, a^{2}, a^{3}) = (a^{1}, a^{2}, a^{2}).
  \end{equation*}
  This corresponds to the 2-simplex in $C^{n}$ with spine
  \begin{equation*}
    (0, 0, 0) \to (1, 0, 0) \to (1, 1, 1).
  \end{equation*}
\end{example}

Our next task is to understand the boundary of the $n$-cube. We can view the boundary of the cube $C^{n}$ as the union of its faces.

\begin{definition}
  \label{def:boundary_of_n-cube}
  The \defn{boundary of the $n$-cube} is the simplicial subset
  \begin{equation}
    \label{eq:boundary_of_cube}
    \partial C^{n} := \bigcup_{i = 1}^{n} \bigcup_{j = 0}^{1} \overset{(1)}{\Delta^{1}} \times \cdots \times \overset{(i)}{\{j\}} \times \cdots \times \overset{(n)}{\Delta^{1}} \subset C^{n}.
  \end{equation}
\end{definition}

The following is easy to see.
\begin{proposition}
  \label{prop:simplex_intersect_faces}
  An $(n-1)$-simplex
  \begin{equation*}
    \gamma = (\gamma_{1}, \ldots, \gamma_{n})\colon \Delta^{n-1} \to C^{n}
  \end{equation*}
  lies entirely within the face
  \begin{equation*}
    \overset{(1)}{\Delta^{1}} \times \cdots \times \overset{(i)}{\{0\}} \times \cdots \times \overset{(n)}{\Delta^{1}}
  \end{equation*}
  if and only if $\gamma_{i} = a^{n} = \const_{0}$, and to the face
  \begin{equation*}
    \overset{(1)}{\Delta^{1}} \times \cdots \times \overset{(i)}{\{1\}} \times \cdots \times \overset{(n)}{\Delta^{1}}
  \end{equation*}
  if and only if $\gamma_{i} = a^0 = \const_{1}$.
\end{proposition}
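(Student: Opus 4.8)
The plan is to unwind the definition of the product simplicial set $C^{n} = (\Delta^{1})^{n}$ together with the definition of what it means for a simplex to lie inside a simplicial subset. First I would observe that, by the definition of products of simplicial sets, the face $\overset{(1)}{\Delta^{1}} \times \cdots \times \overset{(i)}{\{0\}} \times \cdots \times \overset{(n)}{\Delta^{1}}$ appearing in \hyperref[def:boundary_of_n-cube]{Definition~\ref*{def:boundary_of_n-cube}} is precisely the preimage $\pi_{i}^{-1}(\{0\})$ of the vertex inclusion $\{0\} \hookrightarrow \Delta^{1}$ under the $i$th projection $\pi_{i}\colon C^{n} \to \Delta^{1}$, and likewise that the $\{1\}$-face is $\pi_{i}^{-1}(\{1\})$.

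Next I would use that $\Delta^{n-1}$ is representable: a map $\gamma\colon \Delta^{n-1} \to C^{n}$ lies entirely within a simplicial subset $A \subseteq C^{n}$ if and only if it factors through the inclusion $A \hookrightarrow C^{n}$, and since $A$ is closed under faces and degeneracies this happens exactly when $\gamma$, regarded as an $(n-1)$-simplex of $C^{n}$, lies in $A$. Applying this with $A = \pi_{i}^{-1}(\{0\})$ reduces the claim to the statement that $\pi_{i} \circ \gamma = \gamma_{i}$ factors through $\{0\} \hookrightarrow \Delta^{1}$. Since the simplicial subset $\{0\} \subseteq \Delta^{1}$ consists of exactly the degeneracies of the vertex $0$, a map $\Delta^{n-1} \to \Delta^{1}$ factors through it if and only if it is the constant map $\const_{0}$; dually, a map factors through $\{1\}$ if and only if it is $\const_{1}$.

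Finally I would reconcile this with the $a^{k}$ notation. Reading $a^{k}$ as a map out of $\Delta^{n-1}$ (the ``minor abuse of notation'' already in force), the map $a^{n}$ sends the vertices $\{0, \ldots, n-1\}$ of $\Delta^{n-1}$---that is, all of them---to $0$, so $a^{n} = \const_{0}$; and $a^{0}$ sends the empty set to $0$ and everything else to $1$, so $a^{0} = \const_{1}$. Hence the condition $\gamma_{i} = \const_{0}$ is exactly $\gamma_{i} = a^{n}$, and $\gamma_{i} = \const_{1}$ is exactly $\gamma_{i} = a^{0}$, as claimed.

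There is no real obstacle here: the proposition is a direct unwinding of definitions, and the only point that calls for any attention is the bookkeeping of the last step, namely confirming that the reinterpretation of $a^{n}$ and $a^{0}$ as maps out of $\Delta^{n-1}$ genuinely produces the two constant maps.
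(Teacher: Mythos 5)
Your proof is correct. The paper doesn't actually spell out an argument—it just prefaces the proposition with ``The following is easy to see''—and your unwinding of definitions (identifying the face with $\pi_i^{-1}(\{0\})$, using representability of $\Delta^{n-1}$ to reduce to whether $\gamma_i$ factors through $\{0\} \hookrightarrow \Delta^1$, and then checking that the abuse-of-notation reading of $a^n$ and $a^0$ as maps out of $\Delta^{n-1}$ gives $\const_0$ and $\const_1$) is exactly the implicit argument.
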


\begin{definition}
  \label{def:left_box}
  The \defn{left box of the $n$-cube}, denoted $LC^{n}$, is the simplicial subset of $C^{n}$ given by the union of all of the faces in \hyperref[eq:boundary_of_cube]{Equation~\ref*{eq:boundary_of_cube}} except for
  \begin{equation*}
    \Delta^{1} \times \cdots \times \Delta^{1} \times \Delta^{\{1\}}.
  \end{equation*}

  Note that drawing the box with the final coordinate going from left to right, the right face is open here; the terminology is chosen to match with `left horn'.
\end{definition}

\begin{example}
  The simplicial subset $LC^{2} \subset C^{2}$ can be drawn as follows.
  \begin{equation*}
    \begin{tikzcd}
      {(0, 0)}
      \arrow[r]
      \arrow[d]
      & {(0, 1)}
      \\
      {(1, 0)}
      \arrow[r]
      & {(1, 1)}
    \end{tikzcd}
  \end{equation*}
\end{example}

Note that in $LC^{2}$, the morphisms $(0,0) \to (1, 0) \to (1, 1)$ form an inner horn, which we can fill by pushing out along an inner horn inclusion. Our main goal in this section is to show that this is generically true: given a left cube $LC^{n}$, we can fill much of $C^{n}$ using pushouts along inner horn inclusions. To this end, it will be helpful to know how each nondegenerate simplex in $C^{n}$ intersects $LC^{n}$.

\begin{lemma}
  \label{lemma:first_and_last_face}
  Let $\phi\colon \Delta^{n} \to C^{n}$ be a nondegenerate $n$-simplex corresponding to the permutaton $\tau \in S_{n}$. We have the following.
  \begin{enumerate}
    \item The zeroth face $d_{0}\phi$ belongs to $LC^{n}$ if and only if $\tau(n) \neq 1$.

    \item For $0 < i < n$, the face $d_{i}\phi$ never belongs to $LC^{n}$.

    \item The $n$th face $d_{n}\phi$ always belongs to $LC^{n}$.
  \end{enumerate}
\end{lemma}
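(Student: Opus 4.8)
The statement is purely combinatorial, so the plan is to compute the faces $d_i\phi = (\partial_i^* a^{\tau(1)}, \ldots, \partial_i^* a^{\tau(n)})$ explicitly using the formula for $\partial_j^* a^k$ recorded just before Equation~\ref{eq:simplicial_identity_for_cubes}, and then apply Proposition~\ref{prop:simplex_intersect_faces} to decide membership in each face of $\partial C^n$. Recall that $LC^n$ is $\partial C^n$ with the single face $\Delta^1 \times \cdots \times \Delta^1 \times \Delta^{\{1\}}$ (the ``$n$th coordinate equals $1$'' face) removed; so a nondegenerate $(n-1)$-simplex $\gamma = (\gamma_1,\ldots,\gamma_n)$ lies in $LC^n$ if and only if \emph{some} $\gamma_\ell$ is $\const_0$, or \emph{some} $\gamma_\ell$ is $\const_1$ with $\ell \neq n$ (equivalently, by Proposition~\ref{prop:simplex_intersect_faces}, $\gamma$ lies in some face of $\partial C^n$ other than the excluded one). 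Note that $\const_0 = a^n$ can only occur in coordinate $\ell$ if $\tau(\ell)$ can be pushed down to $n$, and $\const_1 = a^0$ similarly requires $\tau(\ell)$ to be pushed down to $0$; since $\partial_j^*$ lowers a superscript by at most $1$ and never below... actually lowers $a^k$ to $a^{k-1}$ or leaves it, the superscript can change by at most one. So after applying $d_i$ to $\phi(\tau)$, coordinate $\ell$ becomes $a^n$ iff $\tau(\ell) = n$ and $i < n$ (face map $\partial_i$ with $i < \tau(\ell) = n$ sends $a^n \mapsto a^{n-1}$... wait, I must be careful: $\partial_i^* a^n = a^{n-1}$ if $i < n$ and $= a^n$ if $i \ge n$, and here $a^n$ as a map $\Delta^{n-1}\to\Delta^1$ is $\const_0$ while $a^n$ as a map $\Delta^n \to \Delta^1$ sends only the last vertex to $\{1\}$). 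Let me instead organize the three cases directly.

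First, for (3): take $i = n$. Then $\partial_n^* a^k = a^{k-1}$ for $k \le n$... no: $\partial_n^* a^k = a^{k-1}$ when $n < k$, which never happens, and $= a^k$ when $n \ge k$, which always happens. Hmm, that gives $d_n\phi = \phi(\tau)$ as a tuple of the same symbols reinterpreted on $\Delta^{n-1}$ — but now some $a^k$ with $k = n$ becomes $\const_0$ on $\Delta^{n-1}$. Since $\tau$ is a permutation of $\{1,\ldots,n\}$, exactly one coordinate $\ell$ has $\tau(\ell) = n$, and for that coordinate $\partial_n^* a^n = a^n = \const_0$ on $\Delta^{n-1}$. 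So $d_n\phi$ has a $\const_0$ coordinate, hence lies in a face of $\partial C^n$ other than the excluded one, hence lies in $LC^n$. This proves (3).

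For (1) and (2): I claim $\partial_i^* a^k = \const_0$ (on $\Delta^{n-1}$, i.e.\ equals $a^n$) precisely when the resulting superscript is $n$, and $=\const_1$ (equals $a^0$) precisely when the resulting superscript is $0$. Using $\partial_i^* a^k = a^{k-1}$ for $i < k$ and $a^k$ for $i \ge k$: the superscript becomes $0$ iff $k = 1$ and $i \ge 1$, i.e.\ $i > 0$; and it becomes $n$ iff ($k = n$ and $i \ge n$) — impossible for $i < n$ — or... $a^{k-1}$ with $k - 1 = n$ is impossible. So for $0 < i < n$: coordinate $\ell$ of $d_i\phi$ equals $\const_1$ iff $\tau(\ell) = 1$, and equals $\const_0$ never. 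Thus $d_i\phi$ lies only in the single face ``coordinate $\ell_0$ equals $1$'' where $\ell_0 = \tau^{-1}(1)$, and \emph{that} face is in $LC^n$ iff $\ell_0 \neq n$, i.e.\ iff $\tau(n) \neq 1$. Hmm — but this is supposed to distinguish $i = 0$ from $0 < i < n$. For $i = 0$: $\partial_0^* a^k = a^{k-1}$ for all $k \ge 1$ (since $0 < k$ always), so $d_0\phi = (a^{\tau(1)-1}, \ldots, a^{\tau(n)-1})$; the coordinate with $\tau(\ell) = 1$ becomes $a^0 = \const_1$, and this face is the excluded one exactly when $\ell = n$. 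So for $i = 0$, $d_0\phi \in LC^n$ iff $\tau(n)\neq 1$, giving (1). Wait, then for $0 < i < n$ I also found $d_i\phi$ lies only in the face ``coordinate $\tau^{-1}(1)$ equals $1$''. That would make (2) false unless that face fails to be the whole story — let me recheck: for $0 < i < n$ and $k = 1$, $\partial_i^* a^1 = a^1$ (since $i \ge 1 = k$), \emph{not} $a^0$! So coordinate $\tau^{-1}(1)$ of $d_i\phi$ is $a^1$, which is $\const_1$ only when $n = 1$. Hence for $1 < n$ and $0 < i < n$, \emph{no} coordinate of $d_i\phi$ is $\const_0$ or $\const_1$, so $d_i\phi$ lies in no face of $\partial C^n$ at all, proving (2). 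The main obstacle — and the only real subtlety — is keeping straight the reindexing convention ($a^n = \const_0$, $a^0 = \const_1$ as maps out of $\Delta^{n-1}$) and the off-by-one in the inequality $i < k$ versus $i \ge k$ in $\partial_i^* a^k$; once those are pinned down, everything follows by direct inspection, and I would present it as a short case analysis rather than a computation.
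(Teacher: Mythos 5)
Your proof is correct and takes essentially the same route as the paper: compute $d_i\phi$ coordinatewise using $\partial_i^*a^k$ and read off membership in $LC^n$ via Proposition~\ref{prop:simplex_intersect_faces}, observing that for $0<i<n$ every resulting superscript lies strictly between $0$ and $n$, that $d_0$ produces an $a^0$ exactly in coordinate $\tau^{-1}(1)$, and that $d_n$ produces an $a^n$ exactly in coordinate $\tau^{-1}(n)$. Before writing this out, prune the false starts (the temporary claim that $\partial_i^*a^1=a^0$ for $i\ge 1$) so the argument reads as a clean three-case inspection, which is precisely the paper's presentation.
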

\begin{proof}
  \begin{enumerate}
    \item We have
      \begin{equation*}
        d_{0}\phi = (a^{\tau(0) - 1}, \ldots a^{\tau(n)-1})
      \end{equation*}
      since $\tau(i) > 0$ for all $i$. For $j = \tau^{-1}(1)$, we have that $\tau(j) = 1$, so the $j$th entry of $d_{0}\phi$ is $a^{0}$. Thus, \hyperref[prop:simplex_intersect_faces]{Proposition~\ref*{prop:simplex_intersect_faces}} guarantees that $d_{0}\phi$ is contained in the face
      \begin{equation*}
        \overset{(1)}{\Delta^{1}} \times \cdots \times \overset{(j)}{\{1\}} \times \cdots \times \overset{(n)}{\Delta^{1}}.
      \end{equation*}
      This face belongs to $LC^{n}$ except when $j = n$.

    \item In this case, the superscript of each entry of $d_{i}\phi$ is between $1$ and $n-1$, hence not equal to $n$ or $0$.

    \item In this case,
      \begin{equation*}
        d_{n}(a^{\tau(1)}, \ldots, a^{\tau(n)}) = (a^{\tau(1)}, \ldots, a^{\tau(n)})
      \end{equation*}
      since $n \geq \tau(i)$ for all $1 \leq i \leq n$. Thus, the $\tau^{-1}(n) = j$th entry is $a^{n}$, so $d_{n}\phi$ belongs to the face
      \begin{equation*}
        \overset{(1)}{\Delta^{1}} \times \cdots \times \overset{(j)}{\{0\}} \times \cdots \times \overset{(n)}{\Delta^{1}}.
      \end{equation*}
  \end{enumerate}
\end{proof}

This is promising: it means that for many simplices in $C^{n}$ which we want to fill relative to $LC^{n}$, we already have the data of the first and last faces. The following lemma will allow us to take advantage of this.

\begin{notation}
  For any subset $T \subseteq [n]$, write
  \begin{equation*}
    \Lambda^{n}_{T} := \bigcup_{t \in T} d_{t} \Delta^{n} \subset \Delta^{n}.
  \end{equation*}
\end{notation}

\begin{lemma}
  \label{lemma:subset_of_faces_inner_anodyne}
  For any proper subset $T \subset [n]$ containing 0 and $n$, the inclusion $\Lambda^{n}_{T} \hookrightarrow \Delta^{n}$ is inner anodyne.
\end{lemma}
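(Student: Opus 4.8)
The plan is to prove the statement by a nested induction: an outer induction on $n$ and, for fixed $n$, an inner induction on the number $k := |[n] \setminus T|$ of missing faces. Since $0, n \in T$, the missing indices $S := [n] \setminus T$ all lie in $\{1, \dots, n-1\}$, and $S$ is nonempty because $T$ is proper. The base case $k = 1$ is exactly the assertion that the inner horn inclusion $\Lambda^n_s \hookrightarrow \Delta^n$ with $0 < s < n$ is inner anodyne, which holds by definition of the class of inner anodyne maps. (Note that $k = 1$ is forced when $n = 2$, so this also settles the base of the outer induction.)

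For the inductive step, assume $k \geq 2$ — so in particular $n \geq 3$ — and pick any $s \in S$. Set $T_{+} := T \cup \{s\}$, so that $[n] \setminus T_{+} = S \setminus \{s\} \subseteq \{1, \dots, n-1\}$ has $k - 1$ elements. We factor the inclusion as
\begin{equation*}
  \Lambda^{n}_{T} \hookrightarrow \Lambda^{n}_{T_{+}} \hookrightarrow \Delta^{n},
\end{equation*}
and it suffices to show that each map is inner anodyne. The second map is inner anodyne by the inner inductive hypothesis, since $T_{+}$ still contains $0$ and $n$ and misses only $k - 1$ inner faces. For the first map, observe that $\Lambda^{n}_{T_{+}} = \Lambda^{n}_{T} \cup d_{s}\Delta^{n}$, so there is a pushout square of simplicial sets
\begin{equation*}
  \begin{tikzcd}
    \Lambda^{n}_{T} \cap d_{s}\Delta^{n}
    \arrow[r]
    \arrow[d, hook]
    & \Lambda^{n}_{T}
    \arrow[d, hook]
    \\
    d_{s}\Delta^{n}
    \arrow[r]
    & \Lambda^{n}_{T_{+}}
  \end{tikzcd}
\end{equation*}
and it is enough to check that the left vertical map is inner anodyne.

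This is where the combinatorial content lies. Using the simplicial identities $d_{i}d_{j} = d_{j-1}d_{i}$ for $i < j$, one identifies, under the isomorphism $d_{s}\colon \Delta^{n-1} \xrightarrow{\sim} d_{s}\Delta^{n}$, the intersection $d_{t}\Delta^{n} \cap d_{s}\Delta^{n}$ with the face $d_{t}\Delta^{n-1}$ when $t < s$ and with $d_{t-1}\Delta^{n-1}$ when $t > s$. Taking the union over $t \in T$ exhibits $\Lambda^{n}_{T} \cap d_{s}\Delta^{n}$ as $\Lambda^{n-1}_{T''}$ for the subset $T'' := \{t \in T : t < s\} \cup \{t - 1 : t \in T,\ t > s\} \subseteq [n-1]$. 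A short check shows that $[n-1] \setminus T'' = \{u \in S : u < s\} \cup \{u - 1 : u \in S,\ u > s\}$ has exactly $k - 1 \geq 1$ elements, all contained in $\{1, \dots, n-2\}$ — the point being that the unshifted indices are strictly below $s \leq n-1$ and hence at most $n-2$, while the shifted indices drop strictly below $n-1$, and $0$ cannot appear since $\min S \geq 1$. Thus $T''$ is a proper subset of $[n-1]$ containing $0$ and $n-1$, so by the outer inductive hypothesis $\Lambda^{n-1}_{T''} \hookrightarrow \Delta^{n-1}$ is inner anodyne. Transporting along $d_{s}$ and using that inner anodyne maps are stable under pushout makes the first map in the factorization inner anodyne; composing the two inner anodyne maps finishes the induction.

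I expect the only genuine obstacle to be the bookkeeping in the last paragraph: correctly tracking how the face indices of $\Delta^{n}$ are relabeled upon restriction to $d_{s}\Delta^{n}$, and verifying that the resulting horn $\Lambda^{n-1}_{T''}$ remains ``doubly inner'' (missing neither the $0$th nor the top face), so that the outer inductive hypothesis genuinely applies. Everything else is formal, relying only on the closure of the inner anodyne maps under pushout and composition.
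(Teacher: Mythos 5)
Your proof is correct and is essentially the paper's argument, just made fully explicit: both fill in the missing inner faces one at a time, identifying each addition as a pushout of a lower-dimensional generalized inner horn inclusion via the simplicial identities, until only the standard inner horn remains. The paper compresses your inner induction on the number of missing faces into the phrase ``we can fill all but one of the faces,'' but the mechanism is the same.
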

\begin{proof}
  Induction. For $n = 2$, $T$ must be equal to $\{0, 2\}$, so $\Delta^{2}_{T} = \Lambda^{2}_{1}$.

  Assume the result holds for $n-1$, and let $T \subset [n]$ be a proper subset containing $0$ and $n$. Using the inductive step, we can fill all but one of the faces $d_{1}\Delta^{n}, \ldots, d_{n-1}\Delta^{n}$. The result follows
\end{proof}

We can use \hyperref[lemma:subset_of_faces_inner_anodyne]{Lemma~\ref*{lemma:subset_of_faces_inner_anodyne}} to show that we can fill much of $C^{n}$ as a sequence of inner anodyne pushouts, but to do this we need to pick an order in which to fill our simplices. We do this as follows.

\begin{definition}
  \label{def:order_on_nondegenerate_simplices}
  We define a total order on $S_{n}$ as follows. For any two permutations $\tau$, $\tau' \in S_{n}$, we say that $\tau < \tau'$ if there exists $k \in [n]$ such that the following conditions are satisfied.
  \begin{itemize}
    \item For all $i < k$, we have that $\tau^{-1}(i) = \tau'^{-1}(i)$.

    \item We have that $\tau^{-1}(k) < \tau'^{-1}(k)$.
  \end{itemize}

  We then define a total order on the nondegenerate simplices of $C^{n}$ by saying that $\phi(\tau) < \phi(\tau')$ if and only if $\tau < \tau'$.
\end{definition}

Note that this is \emph{not} the lexicographic order on $S_{n}$; instead, we have that $\tau < \tau'$ if and only if $\tau^{-1}$ is less than $\tau'^{-1}$ under the lexicographic order.

\begin{example}
  The elements of the permutation group $S_{3}$ have the order
  \begin{equation*}
    (1,2,3) < (1,3,2) < (2,1,3) < (3,1,2) < (2,3,1) < (3,2,1).
  \end{equation*}
\end{example}

We use this ordering to define our filtration.

\begin{notation}
  For $\tau \in S_{n}$, we mean by
  \begin{equation*}
    \bigcup_{\tau' \in S_{n}}^{\tau}
  \end{equation*}
  the union over all $\tau' \in S_{n}$ for which $\tau' < \tau$ with respect to the ordering defined above. Note the strict inequality.
\end{notation}

We would like to show that each step of the filtration
\begin{align}
  \label{eq:inclusions_inner_anodyne}
  \begin{split}
    LC^{n} &\hookrightarrow LC^{n} \cup \phi(1, \ldots, n) \\
    & \hookrightarrow \cdots \\
    & \hookrightarrow LC^{n} \cup \bigcup_{\tau \in S_{n}}^{(2, \ldots, n, 1)} \phi(\tau)
  \end{split}
\end{align}
is inner anodyne. To do this, it suffices to show that for each $\tau < (2, \ldots, n, 1)$, the intersection
\begin{equation}
  \label{eq:intersections_inner_anodyne}
  B_{\tau} = \left(LC^{n} \cup \bigcup_{\tau' \in S_{n}}^{\tau} \phi(\tau')\right) \cap \phi(\tau)
\end{equation}
is of the form $\Lambda^{n}_{T}$ for some proper $T \subseteq [n]$ containing 0 and $n$.

\begin{proposition}
  \label{prop:inner_anodyne_cube_filling}
  Each inclusion in \hyperref[eq:inclusions_inner_anodyne]{Equation~\ref*{eq:inclusions_inner_anodyne}} is inner anodyne.
\end{proposition}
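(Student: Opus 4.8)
The plan is to reduce the statement to \hyperref[lemma:subset_of_faces_inner_anodyne]{Lemma~\ref*{lemma:subset_of_faces_inner_anodyne}} by showing that for each $\tau < (2,\ldots,n,1)$ the intersection $B_\tau$ of \hyperref[eq:intersections_inner_anodyne]{Equation~\ref*{eq:intersections_inner_anodyne}} equals $\Lambda^n_T$ for a proper $T \subseteq [n]$ with $0, n \in T$; granting that, each inclusion $\bigl(LC^n \cup \bigcup^{\tau}_{\tau'}\phi(\tau')\bigr) \hookrightarrow \bigl(LC^n \cup \bigcup^{\tau}_{\tau'}\phi(\tau') \cup \phi(\tau)\bigr)$ is a pushout of $\Lambda^n_T \hookrightarrow \Delta^n$ along $\phi(\tau)$, hence inner anodyne, and inner anodyne maps compose. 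So the whole content is the identification of $B_\tau$.

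First I would observe that a face $d_i\phi(\tau)$ lies in $B_\tau$ precisely when either (a) $d_i\phi(\tau) \subseteq LC^n$, or (b) $d_i\phi(\tau) = d_j\phi(\tau')$ for some $\tau' < \tau$. \hyperref[lemma:first_and_last_face]{Lemma~\ref*{lemma:first_and_last_face}} already handles case (a): $d_n\phi(\tau)$ always lies in $LC^n$, so $n \in T$; the interior faces $d_i\phi(\tau)$ for $0 < i < n$ never lie in $LC^n$; and $d_0\phi(\tau) \in LC^n$ iff $\tau(n)\neq 1$. Since we are only considering $\tau < (2,\ldots,n,1)$, and $(2,\ldots,n,1)$ is exactly the permutation with $\tau^{-1}(1) = n$, the ordering of \hyperref[def:order_on_nondegenerate_simplices]{Definition~\ref*{def:order_on_nondegenerate_simplices}} forces $\tau^{-1}(1) < n$ for every such $\tau$, i.e. $\tau(n) \neq 1$; hence $d_0\phi(\tau) \in LC^n$ and $0 \in T$ as well. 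So it remains to argue that $T$ is a proper subset of $[n]$, equivalently that at least one interior face $d_i\phi(\tau)$, $0 < i < n$, does \emph{not} lie in $B_\tau$ — i.e. is not shared with any earlier $\phi(\tau')$.

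The key combinatorial point is to translate "$d_i\phi(\tau) = d_j\phi(\tau')$" into a statement about permutations. Using the face formula $d_i(a^{\tau(1)},\ldots,a^{\tau(n)}) = (\partial_i^* a^{\tau(1)},\ldots,\partial_i^* a^{\tau(n)})$ together with $\partial_j^* a^i = a^{i-1}$ for $j<i$ and $a^i$ for $j\geq i$, one checks that an interior face $d_i\phi(\tau)$ is itself a nondegenerate $(n-1)$-simplex of the subcube obtained by collapsing one coordinate, and that it coincides with $d_j\phi(\tau')$ iff $\tau'$ is obtained from $\tau$ by transposing the two values $\tau^{-1}$-adjacent across position $i$ — more precisely, $\tau'$ agrees with $\tau$ except that the entry equal to some value $v$ and the entry equal to $v+1$ swap positions, and $j$ is determined by where these sit. (This is the cube analogue of the classical fact that an interior face of a nondegenerate simplex in a product of $\Delta^1$'s is shared by exactly two top simplices.) Then $d_i\phi(\tau) \in B_\tau$ iff the corresponding transpose $\tau'$ satisfies $\tau' < \tau$. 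I would now check that the face $d_i\phi(\tau)$ corresponding to the transposition of the positions of the values $1$ and $2$ — call it $i_0$ — produces a $\tau'$ with $\tau'^{-1}(1) > \tau^{-1}(1)$ or ($\tau'^{-1}(1) = \tau^{-1}(1)$ and $\tau'^{-1}(2) > \tau^{-1}(2)$), hence $\tau' > \tau$, so that $d_{i_0}\phi(\tau) \notin B_\tau$. This gives the missing interior face and shows $T$ is proper.

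The main obstacle I expect is getting the bookkeeping in the previous paragraph exactly right: matching the index $i$ of the face $d_i\phi(\tau)$ with the pair of swapped values, checking that the swap always yields a genuine permutation $\tau'$ (no degeneracy issue — which is automatic for interior faces since no coordinate collapses to a constant), and verifying the direction of the inequality $\tau' \lessgtr \tau$ in terms of the order of \hyperref[def:order_on_nondegenerate_simplices]{Definition~\ref*{def:order_on_nondegenerate_simplices}}, which is the lexicographic order on inverses and so is somewhat delicate to read off from a positional swap. Once the dictionary "interior face of $\phi(\tau)$ $\leftrightarrow$ adjacent-value transposition of $\tau$" is set up cleanly, the three properties of $T$ ($0 \in T$, $n \in T$, $T \neq [n]$) all drop out, and \hyperref[lemma:subset_of_faces_inner_anodyne]{Lemma~\ref*{lemma:subset_of_faces_inner_anodyne}} finishes the proof.
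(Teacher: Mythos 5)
Your overall strategy is the same as the paper's: identify $B_\tau$ as $\Lambda^n_T$ for a proper $T \subseteq [n]$ containing $0$ and $n$, then invoke \hyperref[lemma:subset_of_faces_inner_anodyne]{Lemma~\ref*{lemma:subset_of_faces_inner_anodyne}}. Your arguments that $n \in T$ always, and that $0 \in T$ because $\tau < (2,\ldots,n,1)$ forces $\tau^{-1}(1) < n$, are both correct and match the paper. You also correctly identify the dictionary between interior faces of $\phi(\tau)$ and adjacent-value transpositions of $\tau$; in fact the face index is preserved, $d_i\phi(\tau) = d_i\phi(\tau\circ\gamma_{jj'})$ with $j = \tau^{-1}(i)$, $j' = \tau^{-1}(i+1)$, so there is no residual bookkeeping about ``where $j$ sits.''

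The gap is in the last step, where you try to exhibit a single fixed face that is always missing. You claim that $d_{i_0}\phi(\tau)$ with $i_0 = 1$ (the transposition of the positions of the values $1$ and $2$) always yields $\tau' > \tau$. Unwinding, $\tau'^{-1}(1) = \tau^{-1}(2)$ and $\tau'^{-1}(2) = \tau^{-1}(1)$, so your inequality $\tau'^{-1}(1) > \tau^{-1}(1)$ is precisely $\tau^{-1}(2) > \tau^{-1}(1)$, and the ``or'' branch never fires since $\tau^{-1}(1) \neq \tau^{-1}(2)$. But $\tau^{-1}(2) > \tau^{-1}(1)$ is not always true: take $\tau = (2,1,3) \in S_3$, which satisfies $\tau < (2,3,1)$. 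Here $\tau^{-1}(1) = 2 > 1 = \tau^{-1}(2)$, the transpose is $\tau' = (1,2,3) < \tau$, and indeed $d_1\phi(2,1,3) = (a^1,a^1,a^2) = d_1\phi(1,2,3)$ is already in the union. So $d_1\phi(\tau)$ is not a reliable choice of missing face.

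The paper avoids this by arguing by contradiction rather than naming a face: if every interior face $d_i\phi(\tau)$ were shared, the transposition criterion would force $\tau^{-1}$ to be monotone, pinning $\tau$ to a single permutation that lies outside the allowed range, a contradiction. (Be careful here: working out the sign, ``$d_i\phi(\tau)$ already present'' corresponds to $\tau^{-1}(i) > \tau^{-1}(i+1)$, so the monotonicity is decreasing and the pinned permutation is $(n,\ldots,1)$; the paper's displayed inequalities appear to have this reversed, but the conclusion that the resulting $\tau$ is excluded holds either way.) Your approach could be repaired by choosing $i$ depending on $\tau$ — any $i$ with $\tau^{-1}(i) < \tau^{-1}(i+1)$ works, and such an $i$ exists because $\tau^{-1}$ cannot be strictly decreasing when $\tau^{-1}(1) < n$ — but this is essentially the paper's contradiction argument in disguise, and a fixed $i_0 = 1$ does not suffice.
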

\begin{proof}
  The first inclusion
  \begin{equation*}
    LC^{n} \hookrightarrow LC^{n} \cup \phi(1, \ldots, n)
  \end{equation*}
  is inner anodyne because by \hyperref[lemma:first_and_last_face]{Lemma~\ref*{lemma:first_and_last_face}} the intersection $LC^{n} \cap \phi(1, \ldots, n)$ is of the form $d_{0}\Delta^{n} \cup d_{n} \Delta^{n}$.

  Each subsequent intersection contains the faces $d_{0}\Delta^{n}$ and $d_{n}\Delta^{n}$ (again by \hyperref[lemma:first_and_last_face]{Lemma~\ref*{lemma:first_and_last_face}}), so by \hyperref[lemma:subset_of_faces_inner_anodyne]{Lemma~\ref*{lemma:subset_of_faces_inner_anodyne}}, it suffices to show that for each $\tau$ under consideration, there is at least one face not shared with any previous $\tau'$.

  To this end, fix $0 < i < n$, and consider $\tau \in S_{n}$ such that $\tau \neq (1, \ldots, n)$. We consider the face $d_{i}\phi(\tau)$. Let $j = \tau^{-1}(i)$ and $j' = \tau^{-1}(i+1)$. Then $d_{i} a^{\tau(j)} = d_{i}a^{\tau(j')}$, so $d_{i} \phi(\tau) = d_{i}\phi(\tau \circ \gamma_{jj'})$, where $\gamma_{jj'} \in S_{n}$ is the permutation swapping $j$ and $j'$. Thus, the face $d_{i}\sigma(\tau)$ is equal to the face $d_{i}\sigma(\tau \circ \gamma_{jj'})$, which is already contained in the union under consideration if and only $\tau \circ \gamma_{jj'} < \tau$. This in turn is true if and only if $j < j'$.

  Assume that every face of $\phi(\tau)$ is contained in the union. Then
  \begin{equation*}
    \tau^{-1}(1) < \tau^{-1}(2) < \cdots \tau^{-1}(n),
  \end{equation*}
  which implies that $\tau = (1, \ldots, n)$. This is a contradiction. Thus, each boundary inclusion under consideration is of the form $\Lambda^{n}_{T} \hookrightarrow \Delta^{n}$, for $T \subset [n]$ a proper subset containing 0 and $n$, and is thus inner anodyne.
\end{proof}

Each nondegenerate simplex of $C^{n}$ which we have yet to fill is of the form $\phi(\tau)$, where $\tau(n) = 1$. Thus, each of their spines begins
\begin{equation*}
  (0, \ldots, 0, 0) \to (0, \ldots, 0, 1) \to \cdots,
\end{equation*}
and then remains confined to the face $\Delta^{1} \times \cdots \times \Delta^{1} \times \{1\}$. This implies that the part of $LC^{n}$ yet to be filled is of the form $\Delta^{0} \ast C^{n-1}$, and the boundary of this with respect to which we must do the filling is of the form $\Delta^{0} \ast \partial C^{n-1}$.

Our next task is to show that we can also perform this filling, under the assumption that $(0, \ldots, 0, 0) \to (0, \ldots, 0, 1)$ is marked. We do not give the details of this proof, at is quite similar to the proof of \hyperref[prop:inner_anodyne_cube_filling]{Proposition~\ref*{prop:inner_anodyne_cube_filling}}. One continues to fill simplices in the order prescribed in \hyperref[def:order_on_nondegenerate_simplices]{Definition~\ref*{def:order_on_nondegenerate_simplices}}, and shows that each of these inclusions is marked anodyne. The main tool is the following lemma, proved using the same inductive argument as \hyperref[lemma:subset_of_faces_inner_anodyne]{Lemma~\ref*{lemma:subset_of_faces_inner_anodyne}}.

\begin{lemma}
  \label{lemma:subset_of_faces_marked_anodyne}
  For any subset $T \subset [n]$ containing 1 and $n$, and \emph{not} containing 0, the inclusion
  \begin{equation*}
    (\Lambda^{n}_{T}, \mathcal{E}) \hookrightarrow (\Delta^{n}, \mathcal{F})
  \end{equation*}
  is (cocartesian-)marked anodyne, where by $\mathcal{F}$ we mean the set of degenerate $1$-simplces together with $\Delta^{\{0, 1\}}$, and by $\mathcal{E}$ we mean the restriction of this marking to $\Lambda^{n}_{T}$.
\end{lemma}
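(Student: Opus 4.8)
The plan is to run the same induction on $n$ as in the proof of \hyperref[lemma:subset_of_faces_inner_anodyne]{Lemma~\ref*{lemma:subset_of_faces_inner_anodyne}}: adjoin the missing faces $d_j\Delta^n$ (those with $j \notin T$) to $\Lambda^n_T$ one at a time, exhibiting each step as a pushout of a generating (cocartesian-)marked anodyne morphism, with the base and closing cases being a single such generator. The genuinely new feature, compared with the purely inner-anodyne situation, is that one must now carry the marking along through each reindexing $d_j\Delta^n \cong \Delta^{n-1}$, and this is the step I expect to require the most care.

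I would first dispose of the case $T = [n]\setminus\{0\}$, where $\Lambda^n_T = \Lambda^n_0$. For $n \geq 2$ the marked edge $\Delta^{\{0,1\}}$ is contained in the face $d_n\Delta^n \subseteq \Lambda^n_0$, so $\mathcal{E}$ consists of the degenerate edges together with $\Delta^{\{0,1\}}$, and the inclusion $(\Lambda^n_0, \mathcal{E}) \hookrightarrow (\Delta^n, \mathcal{F})$ is literally one of the generating (cocartesian-)marked anodyne maps — the one whose underlying horn inclusion is $\Lambda^n_0 \hookrightarrow \Delta^n$ with the initial edge marked; for $n = 1$ it is the evident $n=1$ generator. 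This also handles the base case $n = 2$, since there the only admissible $T$ is $\{1,2\}$ and $\Lambda^2_{\{1,2\}} = \Lambda^2_0$.

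For the inductive step, assume the lemma in dimension $n - 1$ and suppose $T \neq [n]\setminus\{0\}$; then $([n]\setminus\{0\})\setminus T$ is nonempty, and since $1, n \in T$ every element $j$ of it satisfies $1 < j < n$, in particular $n \geq 3$. I would adjoin the faces $d_j\Delta^n$ for such $j$ one at a time, say in increasing order of $j$. Adjoining $d_j\Delta^n$ to the union $U_S := \bigcup_{s \in S} d_s\Delta^n$ already built (so $\{1,n\} \subseteq S \subseteq [n]\setminus\{0\}$ and $j \notin S$) is a pushout along $d_j\Delta^n \cap U_S \hookrightarrow d_j\Delta^n$. By the simplicial identities this intersection is the sub-horn $\Lambda^{n-1}_{T''}$ of $d_j\Delta^n \cong \Delta^{n-1}$, where $T''$ is the image of $S$ under the order-isomorphism $[n]\setminus\{j\} \cong [n-1]$; since $1, n \in S$ and $j \geq 2$ one gets $1, n-1 \in T''$, and since $0 \notin S$ and $j \neq 1$ one gets $0 \notin T''$. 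The key point is that, because $j \geq 2$, the marked edge $\Delta^{\{0,1\}}$ lies in $d_j\Delta^n$ and is carried to $\Delta^{\{0,1\}}$ by this order-isomorphism, so $\mathcal{F}$ restricts on $d_j\Delta^n$ to (a copy of) the marking $\mathcal{F}$ on $\Delta^{n-1}$, and likewise on the sub-horn. Hence the inductive hypothesis applies: $(\Lambda^{n-1}_{T''}, \mathcal{F}|_{\Lambda^{n-1}_{T''}}) \hookrightarrow (\Delta^{n-1}, \mathcal{F})$ is (cocartesian-)marked anodyne, so its pushout $(U_S, \mathcal{F}|_{U_S}) \hookrightarrow (U_{S\cup\{j\}}, \mathcal{F}|_{U_{S\cup\{j\}}})$ is too — and the pushout marking really is $\mathcal{F}|$, since $\Delta^{\{0,1\}} \subseteq d_n\Delta^n \subseteq U_S$. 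Once all such $j$ are adjoined we have reached $(\Lambda^n_0, \mathcal{F}|_{\Lambda^n_0})$, and the final inclusion into $(\Delta^n, \mathcal{F})$ is the generator from the previous paragraph; composing the chain finishes the proof. The only real obstacle is this bookkeeping: the hypothesis $0 \notin T$ is exactly what forces $j \geq 2$ for every face adjoined, hence that vertices $0$ and $1$ survive in the correct positions and the restricted marking stays the standard one at each stage.
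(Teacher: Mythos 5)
Your proof is correct and uses exactly the inductive strategy the paper points to (the paper only remarks that the lemma is ``proved using the same inductive argument as Lemma~\ref{lemma:subset_of_faces_inner_anodyne}'' without giving details): adjoin the missing faces $d_j\Delta^n$ for $j\in([n]\setminus\{0\})\setminus T$ one at a time, identify each attaching map as a lower-dimensional instance of the lemma via the order isomorphism $[n]\setminus\{j\}\cong[n-1]$, and finish with the generating cocartesian-marked anodyne horn $\Lambda^n_0\hookrightarrow\Delta^n$. You have also correctly isolated and handled the one genuinely new point relative to the inner-anodyne case, namely that $0\notin T$ forces every adjoined $j$ to satisfy $j\geq 2$, so the marked edge $\Delta^{\{0,1\}}$ survives the reindexing and the restricted marking stays standard at every stage.
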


We collect our major results from this section in the following proposition.

\begin{proposition}
  \label{prop:cube_filling}
  Denote by $J^{n} \subseteq C^{n}$ the simplicial subset spanned by those nondegenerate $n$-simplices whose spines do not begin $(0, \ldots, 0, 0) \to (0, \ldots, 0, 1)$. We can write the filling $LC^{n} \hookrightarrow C^{n}$ as a composition
  \begin{equation*}
    \begin{tikzcd}
      LC^{n}
      \arrow[r, hook, "i"]
      & J^{n}
      \arrow[r, hook, "j"]
      & C^{n}
    \end{tikzcd},
  \end{equation*}
  where $i$ is inner anodyne, and $j$ fits into a pushout square
  \begin{equation*}
    \begin{tikzcd}
      \Delta^{0} \ast \partial C^{n-1}
      \arrow[dr, phantom, "\ulcorner"{at end}]
      \arrow[r, hook]
      \arrow[d, hook]
      & \Delta^{0} \ast C^{n-1}
      \arrow[d, hook]
      \\
      J^{n}
      \arrow[r, hook]
      & C^{n}
    \end{tikzcd},
  \end{equation*}
  where the top inclusion underlies a marked anodyne morphism
  \begin{equation*}
    (\Delta^{0} \ast \partial C^{n-1}, \mathcal{G}) \hookrightarrow (\Delta^{0} \ast C^{n-1}, \mathcal{G}),
  \end{equation*}
  where the marking $\mathcal{G}$ contains all degenerate morphisms together with the morphism $(0, \ldots, 0, 0) \to (0, \ldots,0, 1)$.

\end{proposition}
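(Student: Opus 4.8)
The plan is to establish the two factors $i$ and $j$ separately, lining them up with the two halves of the combinatorial analysis of Subsection~\ref{ssc:filling_cubes_relative_to_their_boundary} that is already in place. First I would pin down $J^{n}$: the second vertex of the spine of $\phi(\tau)$ is the vertex of $C^{n}$ carrying a $1$ exactly in coordinate $\tau^{-1}(1)$, so the spine of $\phi(\tau)$ begins $(0,\dots,0,0)\to(0,\dots,0,1)$ precisely when $\tau(n)=1$; since moreover $\tau<(2,\dots,n,1)$ if and only if $\tau^{-1}(1)<n$, the subcomplex $J^{n}$ is exactly the terminal term $LC^{n}\cup\bigcup_{\tau(n)\neq1}\phi(\tau)$ of the filtration of Proposition~\ref{prop:inner_anodyne_cube_filling}. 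Hence $i\colon LC^{n}\hookrightarrow J^{n}$ is inner anodyne by that proposition.

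For $j$ I would first identify the complement. The nondegenerate $n$-simplices of $C^{n}$ not in $J^{n}$ are the $\phi(\tau)$ with $\tau(n)=1$; for these the last coordinate function is $a^{1}$, which is $0$ at the initial vertex $v_{0}=(0,\dots,0)$ and $1$ elsewhere, so each such simplex has $v_{0}$ as its $0$th vertex and lies in the join of $\{v_{0}\}$ with the facet $\Delta^{1}\times\cdots\times\Delta^{1}\times\{1\}\cong C^{n-1}$; as $\tau$ varies, the faces $\sigma_{\tau}=d_{0}\phi(\tau)$ run over all nondegenerate $(n-1)$-simplices of $C^{n-1}$. The subcomplex they and their faces span is therefore the cone $\Delta^{0}\ast C^{n-1}$ with cone point $v_{0}$, and $J^{n}\cup(\Delta^{0}\ast C^{n-1})=C^{n}$. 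The content of the pushout square is the identity $J^{n}\cap(\Delta^{0}\ast C^{n-1})=\Delta^{0}\ast\partial C^{n-1}$. For $\supseteq$: a nondegenerate simplex of $\partial C^{n-1}$ lies in a facet of $C^{n}$ other than $\Delta^{1}\times\cdots\times\{1\}$, hence in $LC^{n}$; its join with $v_{0}$ lies in such a facet too when the relevant facet of $C^{n-1}$ is a $\{0\}$-facet, and otherwise extends — by flipping that distinguished coordinate first — to a maximal chain $\phi(\tau')$ with $\tau'(n)\neq1$, so again lies in $J^{n}$. For $\subseteq$: a nondegenerate simplex of $\Delta^{0}\ast C^{n-1}$ not in $\Delta^{0}\ast\partial C^{n-1}$ is either a top simplex $\phi(\tau)$ with $\tau(n)=1$, or contains the vertex $w=(0,\dots,0,1)$; but $w$ lies on the spine of $\phi(\sigma)$ only when $\sigma(n)=1$, and such a simplex has every coordinate non-constant, so it lies in no $\phi(\sigma)$ with $\sigma(n)\neq1$ and in no facet, hence not in $J^{n}$.

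Finally, to see that the top map $\Delta^{0}\ast\partial C^{n-1}\hookrightarrow\Delta^{0}\ast C^{n-1}$ underlies a marked anodyne morphism, I would filter over the $\tau$ with $\tau(n)=1$ in the order of Definition~\ref{def:order_on_nondegenerate_simplices}, exactly as in Proposition~\ref{prop:inner_anodyne_cube_filling}. The same swap argument shows that at the stage adjoining $\phi(\tau)$ the intersection with everything accumulated so far is of the form $\Lambda^{n}_{T}$, with $n\in T$ (as $d_{n}\phi(\tau)\in LC^{n}$ by Lemma~\ref{lemma:first_and_last_face}(3)), with $1\in T$ (as the swap applied to $j=\tau^{-1}(1)=n$ gives $d_{1}\phi(\tau)=d_{1}\phi(\tau')$ for a permutation $\tau'$ with $\tau'(n)=2\neq1$, already in $J^{n}$), and with $0\notin T$ (as $d_{0}\phi(\tau)=\sigma_{\tau}$ is a maximal simplex of $C^{n-1}$, hence not in $\partial C^{n-1}$ and the face of no earlier $\phi(\tau')$); since Lemma~\ref{lemma:subset_of_faces_marked_anodyne} requires only $1,n\in T$ and $0\notin T$, the precise identity of $T$ on the remaining indices is immaterial. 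Moreover the edge $\Delta^{\{0,1\}}$ of $\phi(\tau)\cong\Delta^{n}$ is precisely the cube edge $(0,\dots,0,0)\to(0,\dots,0,1)$, and no other edge of the chain $\phi(\tau)$ is both non-degenerate and in $\mathcal{G}$, so the restriction of $\mathcal{G}$ to $\phi(\tau)$ is exactly the marking $\mathcal{F}$ of Lemma~\ref{lemma:subset_of_faces_marked_anodyne}; that lemma therefore makes each of these inclusions marked anodyne, and composing the finitely many of them yields the desired marked anodyne map, from which $j\colon J^{n}\hookrightarrow C^{n}$ arises by pushout. I expect the main obstacle to be the bookkeeping in the identity $J^{n}\cap(\Delta^{0}\ast C^{n-1})=\Delta^{0}\ast\partial C^{n-1}$ — verifying that every cone-on-boundary simplex really does land in $J^{n}$ (the $\{1\}$-facet subcase needs the chain-extension argument) while nothing meeting the edge $(0,\dots,0,0)\to(0,\dots,0,1)$ or the opposite facet nontrivially does — together with the routine check that $\mathcal{G}$ restricts compatibly so that Lemma~\ref{lemma:subset_of_faces_marked_anodyne} applies on the nose at each step.
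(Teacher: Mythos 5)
Your proof is correct and follows exactly the strategy the paper sketches in the paragraph between Proposition~\ref{prop:inner_anodyne_cube_filling} and Lemma~\ref{lemma:subset_of_faces_marked_anodyne}: identify $J^{n}$ with the terminal stage of the inner-anodyne filtration, exhibit the complement as the cone $\Delta^{0}\ast C^{n-1}$ glued along $\Delta^{0}\ast\partial C^{n-1}$, and fill the cone by the same ordered adjunction of maximal simplices using Lemma~\ref{lemma:subset_of_faces_marked_anodyne}. You additionally make explicit the pushout-square verification $J^{n}\cap(\Delta^{0}\ast C^{n-1})=\Delta^{0}\ast\partial C^{n-1}$ and the membership checks $1,n\in T$, $0\notin T$, which the paper states but does not spell out.
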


\subsection{Adjunct data}
\label{ssc:adjunct_data}

When confronted with a lifting problem, one frequently finds a lift by passing to an adjoint lifting problem which has a solution, and transporting the solution back along the adjunction. This relies on the fact that providing data on one side of an adjunction is, in a certain sense, equivalent to providing adjunct data on the other side. In this section, we give a formalization of the the notion of adjunct data in the $\infty$-categorical context. Our main result is \hyperref[prop:can_transport_adjunct_data]{Proposition~\ref*{prop:can_transport_adjunct_data}}, which shows that under certain conditions, we can use data on one side of an adjunction of an $\infty$-categories to fill in data on the other side. First, we recall somewhat explicitly the definition of an adjunction given in \cite{highertopostheory}.

\begin{definition}
  \label{def:adjunction}
  An \defn{adjunction} is a bicartesian fibration $p\colon \mathcal{M} \to \Delta^{1}$. We say that $p$ is \defn{associated} to functors $f\colon \category{C} \to \category{D}$ and $g\colon \category{D} \to \category{C}$ if there exist equivalences $h_{0}\colon \category{C} \to \category{M}_{0}$ and $h_{1}\colon \category{D} \to \category{M}_{1}$, and a commutative diagram
  \begin{equation*}
    \begin{tikzcd}
      \category{C} \times \Delta^{1}
      \arrow[r, "u"]
      \arrow[dr, swap, "\pr"]
      & \mathcal{M}
      \arrow[d, "p"]
      & \category{D} \times \Delta^{1}
      \arrow[l, swap, "v"]
      \arrow[dl, "\pr"]
      \\
      & \Delta^{1}
    \end{tikzcd}
  \end{equation*}
  such that the following conditions are satisfied.
  \begin{itemize}
    \item \emph{The map $u$ is associated to the functor $f$:} the restriction $u|\category{C} \times \{0\} = h_{0}$, the restriction $u|\category{C} \times \{1\} = f \circ h_{1}$, and for each $c \in \category{C}$, the edge $u|\{c\} \times \Delta^{1}$ is $p$-cocartesian.

    \item \emph{The map $v$ is associated to the functor $g$:} the restriction $v|\category{D} \times \{0\} = g \circ h_{0}$, the restriction $u|\category{D} \times \{1\} = h_{1}$, and for each $d \in \category{D}$, the edge $u|\{d\} \times \Delta^{1}$ is $p$-cartesian.
  \end{itemize}

  If $f$ and $g$ are functors to which an adjunction is associated as above, then we say that $f$ is left adjoint to $g$, and equivalently that $g$ is right adjoint to $f$.
\end{definition}

\begin{note}
  As noted in \cite{highertopostheory}, this terminology is slightly imprecise; it would be more correct to say that $p$ is associated to $f$ and $g$ via the data $(h_{0}, h_{1}, u, v)$.
\end{note}

\begin{note}
  It is shown in \cite{highertopostheory} that if $f$ and $g$ are functors such that $f$ is left adjoint to $g$, then we can choose an adjunction $p\colon \mathcal{M} \to \Delta^{1}$ and data $(h_{0}, h_{1}, u, v)$ such that $h_{0}$ and $h_{1}$ are isomorphisms. We can use this to identify $\mathcal{M}_{0}$ with $\category{C}$, and $\mathcal{M}_{1}$ with $\category{D}$. In what follows we will always assume that we have chosen such data, and will thus leave the isomorphisms $h_{0}$ and $h_{1}$ implicit.
\end{note}

\begin{definition}
  \label{def:adjunct_data}
  Let $s\colon K \to \Delta^{1}$ be a map of simplicial sets whose fibers we denote by $K_{0}$ and $K_{1}$, and let $p\colon \category{M} \to \Delta^{1}$ be a bicartesian fibration associated to adjoint functors
  \begin{equation*}
    f : \category{C} \longleftrightarrow \category{D} : g
  \end{equation*}
  via data $(u\colon \category{C} \times \Delta^{1} \to \category{M}, v\colon \category{D} \times \Delta^{1} \to \category{M})$. We say that a map $\alpha\colon K \to \category{C}$ is \defn{adjunct} to a map $\tilde{\alpha}\colon K \to \category{D}$ relative to $s$, and equivalently that $\tilde{\alpha}$ is adjunct to $\alpha$ relative to $s$, if there exists a map $A\colon K \times \Delta^{1} \to \category{M}$ such that the diagram
  \begin{equation*}
    \begin{tikzcd}
      K \times \Delta^{1}
      \arrow[r, "A"]
      \arrow[dr, swap, "\pr_{\Delta^{1}}"]
      & \category{M}
      \arrow[d, "p"]
      \\
      & \Delta^{1}
    \end{tikzcd}
  \end{equation*}
  commutes, and such that the following conditions are satisfied.
  \begin{enumerate}
    \item The restriction $A|_{K \times \{0\}} = \alpha$.

    \item The restriction $A|_{K \times \{1\}} = \tilde{\alpha}$.

    \item The restriction $A|K_{0} \times \Delta^{1}$ is equal to the composition
      \begin{equation*}
        K_{0} \times \Delta^{1} \hookrightarrow K \times \Delta^{1} \overset{\alpha \circ \times \id}{\to} \category{C} \times \Delta^{1} \overset{u}{\to} \mathcal{M}.
      \end{equation*}

    \item The restriction $A|K_{1} \times \Delta^{1}$ is equal to the composition
      \begin{equation*}
        K_{1} \times \Delta^{1} \hookrightarrow K \times \Delta^{1} \overset{\tilde{\alpha} \times \id}{\to} \category{D} \times \Delta^{1} \overset{v}{\to} \mathcal{M}.
      \end{equation*}
  \end{enumerate}
\end{definition}

In the next examples, fix adjoint functors
\begin{equation*}
  f : \category{C} \longleftrightarrow \category{D} : g
\end{equation*}
corresponding to a bicartesian fibration $p\colon \category{M} \to \Delta^{1}$ via data $(u, v)$.

\begin{example}
  Any morphism in $\category{D}$ of the form $fC \to D$ is adjunct to some morphism in $\category{C}$ of the form $C \to gD$, witnessed by any square
  \begin{equation*}
    \begin{tikzcd}
      C
      \arrow[r, "a"]
      \arrow[d]
      & fC
      \arrow[d]
      \\
      gD
      \arrow[r, "b"]
      & D
    \end{tikzcd}
  \end{equation*}
  in $\category{M}$ where $a$ is $u|\{c\} \times \Delta^{1}$ and $b$ is $v|\{d\} \times \Delta^{1}$. This corresponds to the map $s = \id\colon \Delta^{1} \to \Delta^{1}$.
\end{example}

\begin{example}
  Pick some object $D \in \category{D}$, and consider the identity morphism $\id\colon gD \to gD$ in $\category{C}$. This morphism is adjunct to the component of the unit map $\eta_{D}\colon D \to fgD$ relative to $s = \id\colon \Delta^{1} \to \Delta^{1}$.
\end{example}

\begin{example}
  For any simplicial sets $K$ and $K'$, any diagram $K \ast K' \to \category{D}$ such that $K$ is in the image of $f$ is adjunct to some diagram $K \ast K' \to \category{C}$ such that $K'$ is in the image of $g$. This will follow from \hyperref[prop:can_transport_adjunct_data]{Proposition~\ref*{prop:can_transport_adjunct_data}}.
\end{example}

\begin{lemma}
  \label{lemma:left_cylinder_inclusion_marked_anodyne}
  The inclusion of marked simplicial sets
  \begin{equation}
    \label{eq:cocartesian_anodyne_morphism}
    \left( \Delta^{n} \times \Delta^{\{0\}} \coprod_{\partial \Delta^{n} \times \Delta^{\{0\}}} \partial \Delta^{n} \times \Delta^{1}, \mathcal{E} \right) \hookrightarrow (\Delta^{n} \times \Delta^{1}, \mathcal{F})
  \end{equation}
  where the marking $\mathcal{F}$ is the flat marking together with the edge $\Delta^{\{0\}} \times \Delta^{1}$, and $\mathcal{E}$ is the restriction of this marking, is marked (cocartesian) anodyne.
\end{lemma}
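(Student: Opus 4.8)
Write $A := \Delta^{n}\times\Delta^{\{0\}}\cup\partial\Delta^{n}\times\Delta^{1}\subseteq\Delta^{n}\times\Delta^{1}$ for the (underlying simplicial set of the) source; it is the Leibniz product of $\partial\Delta^{n}\hookrightarrow\Delta^{n}$ with $\Delta^{\{0\}}\hookrightarrow\Delta^{1}$. The plan is to realize $A\hookrightarrow\Delta^{n}\times\Delta^{1}$ as a finite composite of pushouts of generating (cocartesian-)marked anodyne morphisms, via the shuffle filtration of the prism. Recall that the nondegenerate $(n+1)$-simplices of $\Delta^{n}\times\Delta^{1}$ are the $n+1$ shuffles $\tau_{0},\dots,\tau_{n}$, where $\tau_{j}\colon\Delta^{n+1}\to\Delta^{n}\times\Delta^{1}$ has first coordinate the degeneracy $s_{j}$ and second coordinate the map sending the first $j+1$ vertices to $0$ and the rest to $1$; concretely $\tau_{j}$ traverses the staircase $(0,0)\to(1,0)\to\dots\to(j,0)\to(j,1)\to(j+1,1)\to\dots\to(n,1)$. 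I would adjoin the $\tau_{j}$ to $A$ one at a time in \emph{decreasing} order of $j$: set $A_{n+1}:=A$ and $A_{j}:=A\cup\tau_{n}\cup\dots\cup\tau_{j}$, show each step $A_{j+1}\hookrightarrow A_{j}$ is a pushout of a generator, and note $A_{0}=\Delta^{n}\times\Delta^{1}$ since every nondegenerate simplex of the prism is a face of some shuffle.

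The combinatorial core is identifying $\tau_{j}^{-1}(A_{j+1})\subseteq\Delta^{n+1}$. A direct face computation gives three things. First, for $k\notin\{j,j+1\}$ the face $d_{k}\tau_{j}$ has non-surjective first coordinate, since deleting vertex $k$ drops a value outside the doubled fiber of $s_{j}$; hence $d_{k}\tau_{j}$ factors through $\partial\Delta^{n}\times\Delta^{1}\subseteq A$. Second, $d_{j+1}\tau_{j}=d_{j+1}\tau_{j+1}$ when $j<n$, while $d_{n+1}\tau_{n}=\Delta^{n}\times\Delta^{\{0\}}\subseteq A$; either way this face already lies in $A_{j+1}$. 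Third, $d_{j}\tau_{j}$ equals $d_{j}\tau_{j-1}$, and among all the shuffles it is a face of exactly $\tau_{j-1}$ and $\tau_{j}$; it is not contained in $A$ (its first coordinate is surjective, its second nonconstant), and $\tau_{j-1}$ is adjoined strictly later, so $d_{j}\tau_{j}$ is the unique face of $\tau_{j}$ missing from $A_{j+1}$. Since $\tau_{j}$ is a monomorphism this yields $\tau_{j}^{-1}(A_{j+1})=\Lambda^{n+1}_{j}$, so the step $A_{j+1}\hookrightarrow A_{j}$ is the pushout of $\Lambda^{n+1}_{j}\hookrightarrow\Delta^{n+1}$ along $\Lambda^{n+1}_{j}\to A_{j+1}$.

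It remains to upgrade these to marked anodyne pushouts, which is where the marking enters. For $1\le j\le n$ the horn $\Lambda^{n+1}_{j}$ is inner, and $\tau_{j}$ carries no nondegenerate marked edge of $\mathcal{F}$, because the unique marked edge of $\Delta^{n}\times\Delta^{1}$ is $\Delta^{\{0\}}\times\Delta^{1}$ and the vertex $(0,1)$ does not lie on $\tau_{j}$ for $j\ge1$; so the step is a pushout of the flat inner horn $(\Lambda^{n+1}_{j})_{\flat}\hookrightarrow\Delta^{n+1}_{\flat}$, which is inner anodyne and hence marked anodyne. For $j=0$ the horn is the outer horn $\Lambda^{n+1}_{0}$, but the edge $\Delta^{\{0,1\}}$ of $\tau_{0}$ is exactly the marked edge $\Delta^{\{0\}}\times\Delta^{1}$; so the step is a pushout of $(\Lambda^{n+1}_{0},\mathcal{E})\hookrightarrow(\Delta^{n+1},\mathcal{E})$ with $\mathcal{E}$ the degenerate edges together with $\Delta^{\{0,1\}}$, a generating cocartesian-marked anodyne morphism --- equivalently, the case $T=\{1,\dots,n+1\}$ of \hyperref[lemma:subset_of_faces_marked_anodyne]{Lemma~\ref*{lemma:subset_of_faces_marked_anodyne}}. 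One checks along the way that at every stage the marking on $A_{j}$ is exactly the restriction of $\mathcal{F}$, so these pushouts are along the generators claimed; the composite $A\hookrightarrow\Delta^{n}\times\Delta^{1}$ is then marked anodyne. (The degenerate case $n=0$ reads $\Delta^{\{0\}}\hookrightarrow(\Delta^{1},\{\Delta^{\{0,1\}}\})$, which is itself a generator.)

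The step I expect to be the main obstacle is the third point above: verifying that the diagonal face $d_{j}\tau_{j}$ occurs in no shuffle besides $\tau_{j-1}$ and $\tau_{j}$, so that the decreasing adjunction order really makes it absent from $A_{j+1}$ and each intersection is a single horn. The rest is routine --- the face computations are mechanical, the marking bookkeeping is straightforward, and the conclusion follows from closure of the marked anodyne morphisms under pushout and (finite) composition.
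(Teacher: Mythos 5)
Your proof is correct and complete. The paper actually states this lemma without proof, treating it as standard; it is a close variant of the pushout--product stability of cocartesian-marked anodyne maps (cf.\ Lurie, HTT Prop.~3.1.2.3), and your shuffle-filtration argument supplies a self-contained combinatorial derivation of exactly the statement as given.

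Two small points worth making explicit. First, the paper's marking $\mathcal{F}$ is \emph{smaller} than the product marking $(\Delta^{n})^{\flat}\times(\Delta^{1})^{\sharp}$ that would appear in the standard Leibniz-product statement --- only $\Delta^{\{0\}}\times\Delta^{1}$ is marked, not every $\{x\}\times\Delta^{1}$ --- so the lemma does not follow by a one-line appeal to the pushout--product axiom, and a direct argument such as yours is genuinely the right move. Your filtration uses the marked edge only in the $j=0$ step, precisely where the $\{0,1\}$-edge of $\tau_{0}$ lands on $\Delta^{\{0\}}\times\Delta^{1}$, so the reduced marking suffices; it would be worth saying this in one sentence so the reader sees the lemma is not just the product-marking version in disguise. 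Second, the key combinatorial claim you flagged --- that $d_{j}\tau_{j}$ is a face of exactly $\tau_{j-1}$ and $\tau_{j}$ among the shuffles --- is settled by observing that $d_{j}\tau_{j}$ contains both the vertex $(j-1,0)$ and the vertex $(j,1)$, forcing any shuffle $\tau_{i}$ containing it to satisfy $j-1\le i\le j$; spelling out that two-line vertex argument would close the only gap you acknowledged.
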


The next proposition shows the power of adjunctions in lifting problems: given data on either side of an adjunction, we can fill in adjunct data on the other side.

\begin{proposition}
  \label{prop:can_transport_adjunct_data}
  Let $\category{M} \to \Delta^{1}$, $\category{C}$, $\category{D}$, $f$, $g$, $u$ and $v$ be as in \hyperref[def:adjunction]{Definition~\ref*{def:adjunction}}, and let
  \begin{equation*}
    \begin{tikzcd}[column sep=small]
      K'
      \arrow[rr, hook, "i"]
      \arrow[dr, swap, "s \circ i"]
      && K
      \arrow[dl, "s"]
      \\
      & \Delta^{1}
    \end{tikzcd},
  \end{equation*}
  be a commuting triangle of simplicial sets, where $i$ is a monomorphim such that $i|_{\{1\}}$ is an isomorphism. Let $\tilde{\alpha}'\colon K' \to \category{D}$ and $\alpha\colon K \to \category{C}$ be maps, and denote $\alpha' = \alpha \circ i$. Suppose that $\tilde{\alpha}'$ is adjunct to $\alpha'$ relative to $s \circ i$. Then there exists a dashed extension
  \begin{equation}
    \label{eq:adjunct_filling_problems}
    \begin{tikzcd}
      K'
      \arrow[r, "\alpha'"]
      \arrow[d, swap, hook, "i"]
      & \category{C}
      \\
      K
      \arrow[ur, swap, "\alpha"]
    \end{tikzcd}
    \quad \rightsquigarrow \quad
    \begin{tikzcd}
      K'
      \arrow[r, "\tilde{\alpha}'"]
      \arrow[d, swap, hook, "i"]
      & \category{D}
      \\
      K
      \arrow[ur, dashed, swap, "\tilde{\alpha}"]
    \end{tikzcd}
  \end{equation}
  such that $\tilde{\alpha}$ is adjunct to $\alpha$ relative to $s$. Furthermore, any two such lifts are equivalent as functors $K \to \category{D}$.

\end{proposition}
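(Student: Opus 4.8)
The plan is to reduce everything to a single lifting problem against the marked anodyne map of Lemma~\ref{lemma:left_cylinder_inclusion_marked_anodyne}, exploiting the fact that a bicartesian fibration is in particular a cocartesian fibration, hence an inner fibration with the marked right lifting property against cocartesian anodyne maps when the source is given the cocartesian marking.

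First I would unwind the hypothesis. The data $\tilde\alpha'\colon K' \to \category{D}$ adjunct to $\alpha' = \alpha \circ i$ relative to $s \circ i$ means precisely that we are given a map $A'\colon K' \times \Delta^{1} \to \category{M}$ lying over $\Delta^{1}$ (via $\pr_{\Delta^1}$), with $A'|_{K' \times \{0\}} = \alpha'$, $A'|_{K' \times \{1\}} = \tilde\alpha'$, and the two compatibility conditions on $A'|_{K'_0 \times \Delta^1}$ and $A'|_{K'_1 \times \Delta^1}$ with $u$ and $v$. What I want to produce is a map $A\colon K \times \Delta^1 \to \category{M}$ over $\Delta^1$ extending $A'$, with $A|_{K \times \{0\}} = \alpha$ and $A|_{K_0 \times \Delta^1}$ given by $u \circ (\alpha \times \id)$, and such that for each $k \in K_0$ the edge $A|_{\{k\} \times \Delta^1}$ is $p$-cocartesian. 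Granting such an $A$, I simply define $\tilde\alpha := A|_{K \times \{1\}}$; then $A|_{K_1 \times \Delta^1}$ will automatically agree with $v \circ (\tilde\alpha|_{K_1} \times \id)$ because $i|_{\{1\}}$ is an isomorphism, so $K_1 = K'_1$ and that restriction is already prescribed by $A'$; this shows $\tilde\alpha$ is adjunct to $\alpha$ relative to $s$, and by construction $\tilde\alpha \circ i = \tilde\alpha'$, solving the dashed lifting problem.

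Now the construction of $A$. Assemble the already-specified data: on $K \times \{0\}$ take $\alpha$; on $K_0 \times \Delta^1$ take $u \circ (\alpha|_{K_0} \times \id)$ (consistent with $\alpha$ on $K_0 \times \{0\}$ since $u|_{\category{C} \times \{0\}}$ is the identity); on $K' \times \Delta^1$ take $A'$. One checks these three are compatible on overlaps — the overlap of $K \times \{0\}$ and $K_0 \times \Delta^1$ is handled by $u|_{\category{C}\times\{0\}} = \id$; the overlap with $A'$ uses that $A'|_{K'_0 \times \Delta^1} = u \circ (\alpha' \times \id)$ by hypothesis and $\alpha' = \alpha \circ i$. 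Together they give a map out of
\begin{equation*}
  Z := \bigl(K \times \{0\}\bigr) \cup \bigl(K_0 \times \Delta^1\bigr) \cup \bigl(K' \times \Delta^1\bigr) \subseteq K \times \Delta^1,
\end{equation*}
and the remaining problem is to extend along $Z \hookrightarrow K \times \Delta^1$. The marking: put on $K \times \Delta^1$ the marking $\mathcal{F}$ consisting of degenerate edges together with all edges $\{k\} \times \Delta^1$ for $k \in K_0$; mark $Z$ by restriction. I claim $(Z, \cdot) \hookrightarrow (K \times \Delta^1, \mathcal{F})$ is marked cocartesian anodyne. This is a filtration argument attaching nondegenerate simplices of $K \setminus K'$ in order of dimension: each attachment of a simplex $\sigma\colon \Delta^n \to K$ adds a pushout of $(\Delta^n \times \{0\}) \cup_{\partial\Delta^n \times \{0\}} (\partial\Delta^n \times \Delta^1) \hookrightarrow \Delta^n \times \Delta^1$ — with the vertex $\{0,\dots,0\} \times \Delta^1$ marked exactly when $\sigma(0) \in K_0$, which is forced since $s$ is a map to $\Delta^1$ and $\{1\}$ is in the image of $i$, so any simplex of $K$ not in $K'$ whose zeroth vertex maps to $1$ would... — here I must be a little careful: the relevant point is that $s$ factors the cylinder direction, so the edge $\{k\}\times\Delta^1$ is degenerate over $\Delta^1$ exactly when $s(k) = 0$ or $s(k)=1$; when $s(k) = 1$ the edge $A|_{\{k\}\times\Delta^1}$ must be degenerate and $k$ lies in $K_1 = K'_1$, already covered by $A'$. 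So the genuinely new simplices have zeroth vertex in $K_0$, and each attachment is exactly the map of Lemma~\ref{lemma:left_cylinder_inclusion_marked_anodyne}, hence marked cocartesian anodyne; a transfinite composition of pushouts of such is again marked cocartesian anodyne. Since $p\colon \category{M} \to \Delta^1$ is a cocartesian fibration, $(\category{M}, \text{cocart. edges}) \to (\Delta^1, \text{all})$ has the right lifting property against marked cocartesian anodyne maps, so $A$ exists with all edges over $K_0$-fibers cocartesian.

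The uniqueness statement I would get from the contractibility built into marked anodyne lifting: two solutions $\tilde\alpha_1, \tilde\alpha_2$ extend to $A_1, A_2$, and one sets up a lifting problem against $(Z \times \Delta^1) \cup (K \times \Delta^1 \times \partial\Delta^1) \hookrightarrow K \times \Delta^1 \times \Delta^1$ — or more cleanly, invokes that the space of $p$-cocartesian lifts of a given edge with fixed source is contractible, propagated simplex-by-simplex, to produce an equivalence $A_1 \simeq A_2$ rel $Z$, whose restriction to $K \times \{1\}$ is an equivalence $\tilde\alpha_1 \simeq \tilde\alpha_2$ in $\Fun(K, \category{D})$.

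The main obstacle is the filtration bookkeeping in the middle paragraph: verifying that attaching the nondegenerate simplices of $K$ rel $K'$ really does express $Z \hookrightarrow K \times \Delta^1$ as a transfinite composition of pushouts of the single generating map of Lemma~\ref{lemma:left_cylinder_inclusion_marked_anodyne}, with the markings matching up — in particular getting the marking $\mathcal{F}$ to restrict correctly at each stage and handling the interaction between the $K_0 \times \Delta^1$ part of $Z$ and the simplices straddling $K_0$ and its complement. Everything else is a formal consequence of the cocartesian fibration lifting property and standard contractibility of cocartesian lifts.
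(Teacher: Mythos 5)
Your proof is correct and takes essentially the same route as the paper's: filter the cylinder inclusion by the generating marked anodyne maps of Lemma~\ref{lemma:left_cylinder_inclusion_marked_anodyne}, using that the initial vertex of each newly attached simplex must lie in $K_{0}$ because $K_{1} = K'_{1}$, then lift against $p\colon \category{M}^{\natural} \to (\Delta^{1})^{\sharp}$ and restrict to $K \times \{1\}$, with contractibility of the lift space giving uniqueness. Your one small refinement---including $K_{0} \times \Delta^{1}$ in the partial domain $Z$, which as you observe does not alter the filtration since $\sigma^{-1}(K_{0}) \times \Delta^{1} \subseteq \partial\Delta^{n} \times \Delta^{1}$ whenever $\sigma$ is not wholly contained in $K_{0}$---makes condition~(3) of Definition~\ref{def:adjunct_data} hold by fiat rather than by the paper's more informal ``choose each lift compatibly'' remark, and the brief tangent about edges being ``degenerate over $\Delta^{1}$'' is a harmless misstep that you self-correct before it affects the argument.
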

\begin{proof}
  Pick some commutative diagram
  \begin{equation*}
    \begin{tikzcd}
      K' \times \Delta^{1}
      \arrow[r, "A"]
      \arrow[dr, swap, "\pr_{\Delta^{1}}"]
      & \category{M}
      \arrow[d]
      \\
      & \Delta^{1}
    \end{tikzcd}
  \end{equation*}
  displaying $\tilde{\alpha}'$ and $\alpha'$ as adjunct. From the map $A$ and the map $\alpha\colon K \to \category{C} \cong \category{M}_{0}$, we can construct the solid commutative square of cocartesian-marked simplicial sets
  \begin{equation*}
    \begin{tikzcd}
      \left(K \times \Delta^{\{0\}} \coprod_{K' \times \Delta^{\{0\}}} K' \times \Delta^{1}\right)^{\heartsuit}
      \arrow[r]
      \arrow[d, swap, "w"]
      & \category{M}^{\natural}
      \arrow[d]
      \\
      (K \times \Delta^{1})^{\heartsuit}
      \arrow[ur, dashed, "\ell"]
      \arrow[r]
      & (\Delta^{1})^{\sharp}
    \end{tikzcd},
  \end{equation*}
  where $(K \times \Delta^{1})^{\heartsuit}$ is the marked simplicial set where the only nondenerate morphisms marked are those of the form $\{k\} \times \Delta^{1}$ for $k \in K|_{s^{-1}\{0\}}$, and the $\heartsuit$-marked pushout-product denotes the restriction of this marking to the pushout-product. We can write the morphism of simplicial sets underlying $w$ as a smash-product $(K' \hookrightarrow K) \wedge (\Delta^{\{0\}} \hookrightarrow \Delta^{1})$. Building $K' \hookrightarrow K$ simplex by simplex, in increasing order of dimension, we can write $w$ as a transfinite composition of pushouts along inclusions of the form given in \hyperref[eq:cocartesian_anodyne_morphism]{Equation~\ref*{eq:cocartesian_anodyne_morphism}} (by assumption each simplex $\sigma$ we adjoin to $K$ is not completely contained in $K'$, so the initial vertex of $\sigma$ must lie in the fiber of $s$ over $0$). We can thus also build our lift $\ell$ by lifting against each of these in turn, and we can always choose each lift so that it is compatible with $v$. We can then take $\tilde{\alpha} = \ell|K \times \{1\}$.

  It is manifest from this technique that the space of lifts is contractible; in particular, any two such lifts are equivalent in $\Map(K \times \Delta^{1}, \category{M})$, so their restrictions to $K \times \{1\}$ are equivalent as functors $K \to \category{D}$.

\end{proof}

\begin{example}
  Let $f : \category{C} \leftrightarrow \category{D} : g$ be an adjunction of $1$-categories, giving an adjunction between quasicategories upon taking nerves. Consider the diagram
  \begin{equation*}
    \begin{tikzcd}[column sep=small]
      K' = \Delta^{\{0,1,3\}} \cup \Delta^{\{0,2,3\}}
      \arrow[dr]
      \arrow[rr, hook, "i"]
      && \Delta^{3} = K
      \arrow[dl]
      \\
      & \Delta^{1}
    \end{tikzcd},
  \end{equation*}
  where both downwards-facing arrows take $0$, $1 \mapsto 0$ and $2$, $3 \mapsto 1$, and fix a map $\tilde{\alpha}\colon K \to N(\category{C})$ and a map $\alpha'\colon K' \to N(\category{D})$ such that $\tilde{\alpha} \circ i$ is adjunct to $\alpha'$. This corresponds to the diagrams
  \begin{equation*}
    \overbrace{
      \begin{tikzcd}[ampersand replacement=\&]
        C
        \arrow[r]
        \arrow[d]
        \& gD
        \arrow[d]
        \\
        C'
        \arrow[r]
        \arrow[ur, dashed]
        \& gD'
      \end{tikzcd}
    }^{\text{in } \category{C}}
    \qquad\qquad
    \overbrace{\begin{tikzcd}[ampersand replacement=\&]
      fC
      \arrow[r]
      \arrow[d]
      \& D
      \arrow[d]
      \\
      fC'
      \arrow[r]
      \& D'
    \end{tikzcd}}^{\text{in }\category{D}}
  \end{equation*}
  where the solid diagrams in each category are adjunct to one another.
  \hyperref[prop:can_transport_adjunct_data]{Proposition~\ref*{prop:can_transport_adjunct_data}} implies that the solution to the lifting problem on the left yields one on the right. Similarly, its dual implies the converse. Thus, \hyperref[prop:can_transport_adjunct_data]{Proposition~\ref*{prop:can_transport_adjunct_data}} implies in particular that such lifting problems are equivalent.
\end{example}


We note that in the proof of \hyperref[prop:can_transport_adjunct_data]{Proposition~\ref*{prop:can_transport_adjunct_data}}, we did not need to use the full power of the statement that $f$ was left adjoint to $g$ (i.e.\ that $p\colon \category{M} \to \Delta^{1}$ was a bicartesian fibration), only that $f$ was \emph{locally} left adjoint to $g$, (i.e.\ that $p$ admitted cocartesian lifts at certain objects). This allows us to generalize \hyperref[prop:can_transport_adjunct_data]{Proposition~\ref*{prop:can_transport_adjunct_data}} somewhat.

\begin{definition}
  \label{def:partial_adjunction}
  Let $g\colon \category{D} \to \category{C}$ be a functor between quasicategories, and pick some cartesian fibration $p\colon \category{M} \to \Delta^{1}$ which classifies it. We assume without loss of generality that $h_{0}\colon \category{M}_{0} \cong \category{C}$ and  $h_{1}\colon \category{M}_{1} \cong \category{D}$ are isomorphisms, and we will notationally suppress them.

  We say that $g$ \defn{admits a left adjoint at $c \in \category{C}$} if $p$ admits a cocartesian lift of the morphism $\id_{\Delta^{1}}$ with source $c \in \category{M}_{0}$.
\end{definition}

\begin{definition}
  \label{def:adjunct_local}
  Let $s\colon K \to \Delta^{1}$ be a map of simplicial sets whose fibers we denote by $K_{0}$ and $K_{1}$, and let $p\colon \category{M} \to \Delta^{1}$ be a cartesian fibration associated to $f\colon \category{D} \to \category{C}$ via a map $v\colon \category{D} \times \Delta^{1} \to \category{M}$. We say that a map $\alpha\colon K \to \category{C}$ is \defn{adjunct} to a map $\tilde{\alpha}\colon K \to \category{D}$ relative to $s$, and equivalently that $\tilde{\alpha}$ is adjunct to $\alpha$ relative to $s$, if there exists a map $A\colon K \times \Delta^{1} \to \category{M}$ such that the diagram
  \begin{equation*}
    \begin{tikzcd}
      K \times \Delta^{1}
      \arrow[r, "A"]
      \arrow[dr, swap, "\pr_{\Delta^{1}}"]
      & \category{M}
      \arrow[d, "p"]
      \\
      & \Delta^{1}
    \end{tikzcd}
  \end{equation*}
  commutes, and such that the following conditions are satisfied.
  \begin{enumerate}
    \item The restriction $A|_{K \times \{0\}} = \alpha$.

    \item The restriction $A|_{K \times \{1\}} = \tilde{\alpha}$.

    \item For each vertex $k \in K_{0}$, the image of $\{k\} \times \Delta^{1}$ under $A$ in $\category{M}$ is $p$-cartesian.

    \item The restriction $A|K_{1} \times \Delta^{1}$ is equal to the composition
      \begin{equation*}
        K_{1} \times \Delta^{1} \hookrightarrow K \times \Delta^{1} \overset{\tilde{\alpha} \times \id}{\to} \category{D} \times \Delta^{1} \overset{v}{\to} \mathcal{M}.
      \end{equation*}
  \end{enumerate}
\end{definition}

\begin{example}
  \label{eg:partial_adjunctions_and_kan_extensions}
  Suppose $f\colon \category{C} \to \category{C}'$ and $F\colon \category{C} \to \category{D}$ are functors. The statement that the left Kan extension $f_{!}F\colon \category{C'} \to \category{D}$ exists is equivalent to the statement that $f^{*}$ admits a left adjoint at $F$.
\end{example}

The proof of \hyperref[prop:can_transport_adjunct_data]{Proposition~\ref*{prop:can_transport_adjunct_data}} can be used as is to show the following statement.

\begin{proposition}
  \label{prop:can_transport_adjunct_data_partial}
  Let $p\colon \category{M} \to \Delta^{1}$ classify $g\colon \category{D} \to \category{C}$ via $v\colon \category{D} \times\Delta^{1} \to \category{M}$, and let
  \begin{equation*}
    \begin{tikzcd}[column sep=small]
      K'
      \arrow[rr, hook, "i"]
      \arrow[dr, swap, "s \circ i"]
      && K
      \arrow[dl, "s"]
      \\
      & \Delta^{1}
    \end{tikzcd},
  \end{equation*}
  be a commuting triangle of simplicial sets, where $i$ is a monomorphim such that $i|_{\{1\}}$ is an isomorphism. Let $\tilde{\alpha}'\colon K' \to \category{D}$ and $\alpha\colon K \to \category{C}$ be maps, and denote $\alpha' = \alpha \circ i$. Suppose that $\tilde{\alpha}'$ is adjunct to $\alpha'$ relative to $s \circ i$. Further suppose that $g$ admits a left adjoint at all vertices belonging to the image $\alpha(K_{0}) \subseteq \category{C}$.

  Then there exists a dashed extension
  \begin{equation}
    \label{eq:adjunct_filling_problems_partial}
    \begin{tikzcd}
      K'
      \arrow[r, "\alpha'"]
      \arrow[d, swap, hook, "i"]
      & \category{C}
      \\
      K
      \arrow[ur, swap, "\alpha"]
    \end{tikzcd}
    \quad \rightsquigarrow \quad
    \begin{tikzcd}
      K'
      \arrow[r, "\tilde{\alpha}'"]
      \arrow[d, swap, hook, "i"]
      & \category{D}
      \\
      K
      \arrow[ur, dashed, swap, "\tilde{\alpha}"]
    \end{tikzcd}
  \end{equation}
  such that $\tilde{\alpha}$ is adjunct to $\alpha$ relative to $s$. Furthermore, any two such lifts are equivalent as functors $K \to \category{D}$.
\end{proposition}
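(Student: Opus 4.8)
The plan is to re-run the proof of Proposition~\ref{prop:can_transport_adjunct_data} essentially verbatim; the single new ingredient is a bookkeeping observation about \emph{which} objects of $\category{C}$ must admit cocartesian lifts along $p$, together with the remark that each lifting step in that proof can be carried out stagewise rather than by appeal to a global right-lifting property (which is unavailable here, since $p$ need not be a cocartesian fibration).

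First I would choose a map $A'\colon K' \times \Delta^{1} \to \category{M}$ over $\Delta^{1}$ witnessing that $\tilde{\alpha}'$ is adjunct to $\alpha'$ relative to $s \circ i$, and set up the task as that of extending $A'$, together with $\alpha\colon K \to \category{C} \cong \category{M}_{0}$, to a map $A\colon K \times \Delta^{1} \to \category{M}$ over $\Delta^{1}$ which restricts to $\alpha$ on $K \times \{0\}$ and satisfies conditions (3) and (4) of Definition~\ref{def:adjunct_local}; then $\tilde{\alpha} := A|_{K \times \{1\}}$ is the desired lift. The underlying extension problem has inclusion of domains
\begin{equation*}
  K \times \Delta^{\{0\}} \coprod_{K' \times \Delta^{\{0\}}} K' \times \Delta^{1} \hookrightarrow K \times \Delta^{1},
\end{equation*}
which is the pushout-product of $K' \hookrightarrow K$ with $\Delta^{\{0\}} \hookrightarrow \Delta^{1}$; building $K' \hookrightarrow K$ one nondegenerate simplex at a time in order of increasing dimension writes it as a transfinite composition of pushouts of the inclusions of Equation~\ref{eq:cocartesian_anodyne_morphism}. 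Crucially, each attached simplex $\sigma$ is not contained in $K'$, hence cannot lie in the fiber $K_{1}$ — which agrees with $K'_{1}$ since $i|_{\{1\}}$ is an isomorphism — so the initial vertex of $\sigma$ lies in $K_{0}$, and its image under $\alpha$ lies in $\alpha(K_{0})$.

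The only place the proof of Proposition~\ref{prop:can_transport_adjunct_data} used that $p$ is bicartesian is in solving these successive pushouts; here I would solve them one at a time. When $\sigma$ has dimension $0$, the marked edge of the source of Equation~\ref{eq:cocartesian_anodyne_morphism} is the whole vertical edge over $\sigma$, which is not yet specified, and I would take it to be a $p$-cocartesian lift of $\id_{\Delta^{1}}$ with source $\alpha(\sigma) \in \alpha(K_{0})$; such a lift exists precisely because $g$ admits a left adjoint at every vertex of $\alpha(K_{0})$. When $\sigma$ has positive dimension, the marked edge of the source lies over the initial vertex of $\sigma$ and has already been supplied — either by $A'$, in which case it is $p$-cocartesian since $A'$ exhibits an adjunct relation (condition (3) of Definition~\ref{def:adjunct_local}, applied with $K'_{0} \subseteq K_{0}$), or at a dimension-zero stage above — and the remaining lifting problem can be solved because $p$ is an inner fibration carrying that single distinguished cocartesian edge, which is the local content underlying Lemma~\ref{lemma:left_cylinder_inclusion_marked_anodyne}. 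As in the original proof, each lift can be chosen compatibly with $v$, which is automatic on $K_{1} = K'_{1}$ since $A'$ already satisfies condition (4) there; running the transfinite composition yields $A$, hence $\tilde{\alpha}$. Uniqueness follows as before: the admissible choices at each stage form a contractible space — of cocartesian edges at the dimension-zero stages, of fillers at the higher-dimensional stages — so the space of such $A$ is contractible, and any two resulting functors $\tilde{\alpha}\colon K \to \category{D}$ are equivalent.

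The point requiring genuine care is the bookkeeping just described: one must verify that no step of the transfinite composition ever demands a cocartesian lift at an object of $\category{C}$ outside $\alpha(K_{0})$. This reduces to the two facts that every nondegenerate simplex of $K$ not lying in $K'$ has its initial vertex in $K_{0}$, and that the marked edge processed by Lemma~\ref{lemma:left_cylinder_inclusion_marked_anodyne} at a positive-dimensional stage always lies over that same initial vertex, so that the cocartesian edges chosen at the dimension-zero stages — together with those already present in $A'$ — suffice for the entire construction.
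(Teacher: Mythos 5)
The proposal is correct and takes essentially the same approach as the paper: the paper's own proof of this proposition consists of a single sentence referring back to the proof of Proposition~\ref{prop:can_transport_adjunct_data}, preceded by a remark that only cocartesian lifts at certain objects were needed there. Your proposal spells out the bookkeeping the paper leaves implicit — that every nondegenerate simplex of $K$ outside $K'$ has its initial vertex in $K_{0}$ (since $K_{1} = K'_{1} \subseteq K'$), hence the only cocartesian lifts required are at vertices of $\alpha(K_{0})$, and that the successive lifting problems can be solved stagewise using inner-fibration structure together with those local cocartesian edges rather than via a global right-lifting property.
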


\subsection{Left Kan implies globally left Kan}
\label{ssc:left_kan_implies_globally_left_kan}

In this subsection, we will prove \hyperref[thm:left_kan_implies_globally_left_kan]{Theorem~\ref*{thm:left_kan_implies_globally_left_kan}}. In order to do that, we must show that we can solve lifting problems of the form
\begin{equation*}
  \label{eq:specific_left_horns_in_cat_infty_bicategories}
  \begin{tikzcd}
    \Delta^{\{0, 1, n+1\}}_{\flat}
    \arrow[d, hook]
    \arrow[dr, "\tau"]
    \\
    (\Lambda^{n+1}_{0})_{\flat}
    \arrow[r]
    \arrow[d, hook]
    & \ICCat
    \\
    \Delta^{n+1}_{\flat}
    \arrow[ur, dashed]
  \end{tikzcd},
\end{equation*}
for $n \geq 2$, where $\tau$ is left Kan. Forgetting for a moment the upper triangle singling out the $2$-simplex $\tau$, and using the adjunction $\Csc \dashv \Nsc$, we will do this by solving equivalent lifting problems of the form\footnote{Note that we are implicitly taking $\C[\Lambda^{n+1}_{0}]$ and $\C[\Delta^{n+1}]$ to carry the flat markings on their mapping spaces.}
\begin{equation}
  \label{eq:left_horns_in_cat_infty_scaled}
  \begin{tikzcd}
    \C[\Lambda^{n+1}_{0}]
    \arrow[r]
    \arrow[d, hook]
    & \QCat
    \\
    \C[\Delta^{n+1}]
    \arrow[ur, dashed, swap, "\mathcal{F}"]
  \end{tikzcd}.
\end{equation}
We expect that the reader is familiar with the basics of (scaled) rigidification, so we give only a rough description of this lifting problem here. The objects of the simplicial category $\C[\Delta^{n+1}]$ are given by the set $\{0, \ldots, n+1\}$, and the mapping spaces are defined by
\begin{equation*}
  \C[\Delta^{n+1}](i, j) =
  \begin{cases}
    N(P_{ij}), &i \leq j \\
    \emptyset, & i > j
  \end{cases}.
\end{equation*}

where $P_{ij}$ is the poset of subsets of the linearly ordered set $\{i, \ldots, j\}$ containing $i$ and $j$, ordered by inclusion. The simplicial category $\C[\Lambda^{n+1}_{0}]$ has the same objects as $\C[\Delta^{n+1}]$, and each morphism space $\C[\Lambda^{n+1}_{0}](i, j)$ is a simplicial subset of $\C[\Delta^{n+1}](i, j)$, as we shall soon describe.

The map $\C[\Lambda^{n+1}_{0}](i, j) \hookrightarrow \C[\Delta^{n+1}](i, j)$ is an isomorphism except for $(i, j) = (1, n+1)$ and $(i, j) = (0, n+1)$; in these cases, it is an inclusion. The missing data corresponds in the case of $(1, n+1)$ to the missing face $d_{0}\Delta^{n+1}$ of $\Lambda^{n+1}_{0}$, and in the case of $(0, n+1)$ to the missing interior. Thus, to find a filling as in \hyperref[eq:left_horns_in_cat_infty_scaled]{Equation~\ref*{eq:left_horns_in_cat_infty_scaled}}, we need to solve the lifting problems
\begin{equation}
  \label{eq:lifting_problem_1}
  \begin{tikzcd}
    \C[\Lambda^{n+1}_{0}](1, n+1)
    \arrow[r]
    \arrow[d, hook]
    & \Fun(\mathcal{F}(1), \mathcal{F}(n+1))
    \\
    \C[\Delta^{n+1}](1, n+1)
    \arrow[ur, dashed, swap, "\ell'"]
  \end{tikzcd}
\end{equation}
and
\begin{equation}
  \label{eq:lifting_problem_2}
  \begin{tikzcd}
    \C[\Lambda^{n+1}_{0}](0, n+1)
    \arrow[r]
    \arrow[d, hook]
    & \Fun(\mathcal{F}(0), \mathcal{F}(n+1))
    \\
    \C[\Delta^{n+1}](0, n+1)
    \arrow[ur, dashed, swap, "\ell"]
  \end{tikzcd}.
\end{equation}
However, these problems are not independent; the filling $\ell$ of the full simplex needs to agree with the filling $\ell'$ we found for the missing face of the horn, corresponding to the condition that the square
\begin{equation}
  \label{eq:lifting_problem_compatibility_condition}
  \begin{tikzcd}
    \C[\Delta^{n+1}](1, n+1)
    \arrow[r, "\ell'"]
    \arrow[d, swap, "{\{0,1\}}^{*}"]
    & \Fun(\mathcal{F}(1), \mathcal{F}(n+1))
    \arrow[d, "\mathcal{F}({\{0, 1\}})^{*}"]
    \\
    \C[\Delta^{n+1}](0, n+1)
    \arrow[r, "\ell"]
    & \Fun(\mathcal{F}(0), \mathcal{F}(n+1))
  \end{tikzcd}
\end{equation}
commute.

\begin{notation}
  \label{notation:simplicial_lifting}
  Recall our desired filling $\mathcal{F}\colon \C[\Delta^{n+1}] \to \QCat$ of \hyperref[eq:left_horns_in_cat_infty_scaled]{Equation~\ref*{eq:left_horns_in_cat_infty_scaled}}. We will denote $\mathcal{F}(i)$ by $X_{i}$, and for each subset $S = \{i_{1}, \ldots, i_{k}\} \subseteq [n]$, we will denote $\category{F}(S) \in \Map(X_{i_{1}}, X_{i_{k}})$ by $f_{i_{k}\cdots i_{1}}$. For any inclusion $S' \subseteq S \subseteq [n]$ preserving minimim and maximum elements, we will denote the corresponding morphism by $\alpha^{S'}_{S}$.
\end{notation}

\begin{theorem}
  For any $n \geq 2$ and any globally left Kan $2$-simplex $\tau$, the solid extension problem
  \begin{equation*}
    \begin{tikzcd}
      \Delta^{\{0, 1, n+1\}}_{\flat}
      \arrow[d, hook]
      \arrow[dr, "\tau"]
      \\
      (\Lambda^{n+1}_{0})_{\flat}
      \arrow[r]
      \arrow[d, hook]
      & \ICCat
      \\
      \Delta^{n+1}_{\flat}
      \arrow[ur, dashed]
    \end{tikzcd}
  \end{equation*}
  admits a dashed filler.
\end{theorem}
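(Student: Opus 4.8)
The plan is to reduce the lifting problem in $\ICCat$ to the simplicial lifting problems \hyperref[eq:lifting_problem_1]{\eqref{eq:lifting_problem_1}} and \hyperref[eq:lifting_problem_2]{\eqref{eq:lifting_problem_2}}, together with the compatibility square \hyperref[eq:lifting_problem_compatibility_condition]{\eqref{eq:lifting_problem_compatibility_condition}}, and then to solve these two by the machinery of the previous subsections: the cube-filling decomposition of \hyperref[prop:cube_filling]{Proposition~\ref*{prop:cube_filling}} handles the inner-anodyne part, and \hyperref[prop:can_transport_adjunct_data_partial]{Proposition~\ref*{prop:can_transport_adjunct_data_partial}} (applied to the adjunction $f_!\dashv f^*$ furnished by the left-Kan hypothesis via \hyperref[eg:partial_adjunctions_and_kan_extensions]{Example~\ref*{eg:partial_adjunctions_and_kan_extensions}}) handles the remaining marked-anodyne part. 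First I would fix notation as in \hyperref[notation:simplicial_lifting]{Notation~\ref*{notation:simplicial_lifting}}: writing $X_i=\mathcal F(i)$, the horn already determines all $X_i$, all the functors $f_{\cdots}$ except those indexed by subsets $S$ with $\min S\in\{0,1\}$ and $\max S=n+1$ and $|S|\ge 3$, and the $2$-simplex $\tau$ pins down $X_0\xrightarrow{f}X_1\xrightarrow{G}X_{n+1}$ together with the exhibiting natural transformation. The hypothesis that $\tau$ is left Kan says exactly that $G=f_!F$ for $F=f_{(n+1)\,0}\colon X_0\to X_{n+1}$ the composite along $\{0,1,n+1\}$, so $f^*\colon\Fun(X_1,X_{n+1})\to\Fun(X_0,X_{n+1})$ admits a left adjoint at $F$, and more generally at every object in the relevant image.

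The key steps, in order: (1) Identify $\C[\Delta^{n+1}](1,n+1)$ with (the nerve of) the cube $C^{n}$ — the poset $P_{1,n+1}$ of subsets of $\{1,\dots,n+1\}$ containing $1$ and $n+1$ is the poset $(\Delta^1)^{n}$ on the intermediate coordinates — and identify $\C[\Lambda^{n+1}_0](1,n+1)$ with the left box $LC^{n}$ of \hyperref[def:left_box]{Definition~\ref*{def:left_box}}, with the distinguished edge $(0,\dots,0,0)\to(0,\dots,0,1)$ corresponding to $\{1,n+1\}\subseteq\{1,2,n+1\}$, i.e.\ to a component of the unit/counit data governed by $\tau$. (2) Apply \hyperref[prop:cube_filling]{Proposition~\ref*{prop:cube_filling}} to factor $LC^{n}\hookrightarrow C^{n}$ as $LC^n\xhookrightarrow{i}J^n\xhookrightarrow{j}C^n$ with $i$ inner anodyne; since $\Fun(X_1,X_{n+1})$ is a quasicategory, the inner-anodyne part extends freely, producing $\ell'$ on $J^n$. (3) For the remaining push-out along $\Delta^0\ast\partial C^{n-1}\hookrightarrow\Delta^0\ast C^{n-1}$, observe that the cone point records the object $f_!F$ and the marked edge records the unit, so filling this is precisely transporting adjunct data: set $K=\Delta^0\ast C^{n-1}$, $K'=\Delta^0\ast\partial C^{n-1}$, $s\colon K\to\Delta^1$ collapsing the cone point to $0$ and the base $C^{n-1}$ to $1$, and apply \hyperref[prop:can_transport_adjunct_data_partial]{Proposition~\ref*{prop:can_transport_adjunct_data_partial}} with the cartesian fibration classifying $f^*$ — the hypothesis that $f^*$ has a left adjoint at the objects of $\alpha(K_0)$ is guaranteed because $\tau$ is left Kan (and by \hyperref[prop:kan_extend_along_composition]{Proposition~\ref*{prop:kan_extend_along_composition}} / \hyperref[prop:homotopy_invariance_of_witness]{Proposition~\ref*{prop:homotopy_invariance_of_witness}} the relevant composites remain left Kan). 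This completes $\ell'$ to all of $C^{n}$. (4) Solve \hyperref[eq:lifting_problem_2]{\eqref{eq:lifting_problem_2}} by the same method one dimension up: $\C[\Delta^{n+1}](0,n+1)\cong N((\Delta^1)^{n+1})=C^{n+1}$, and the data already specified — the horn, the face $\ell'$ just constructed, and the composite along $\{0,1,n+1\}$ coming from $\tau$ — assemble into a map out of a subcomplex which, after the inner-anodyne reduction of \hyperref[prop:cube_filling]{Proposition~\ref*{prop:cube_filling}}, again reduces to transporting adjunct data across $f_!\dashv f^*$; here the "relative to $s$" bookkeeping is set up so that the constructed lift automatically restricts correctly along $\mathcal F(\{0,1\})^*$, which is exactly the commutativity of \hyperref[eq:lifting_problem_compatibility_condition]{\eqref{eq:lifting_problem_compatibility_condition}}.

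The main obstacle I expect is step (3)–(4): matching up the combinatorial description of the "missing" part of the cube ($\Delta^0\ast\partial C^{n-1}\hookrightarrow\Delta^0\ast C^{n-1}$ with its distinguished marked edge) with the hypotheses of \hyperref[prop:can_transport_adjunct_data_partial]{Proposition~\ref*{prop:can_transport_adjunct_data_partial}} — in particular verifying that the map $s\colon K\to\Delta^1$ can be chosen so that $i\colon K'\hookrightarrow K$ is an isomorphism over $\{1\}$, that the already-given data on $K'$ is genuinely adjunct (relative to $s\circ i$) to the restriction of the $\category{C}$-side data coming from $\tau$, and that the "locally left adjoint at $\alpha(K_0)$" condition holds at every vertex in the image. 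The uniqueness clause of \hyperref[prop:can_transport_adjunct_data_partial]{Proposition~\ref*{prop:can_transport_adjunct_data_partial}} is what makes the two solutions $\ell$ and $\ell'$ compatible rather than merely separately existent, so I would emphasize that the lift of \eqref{eq:lifting_problem_2} is built \emph{extending} the lift of \eqref{eq:lifting_problem_1} rather than independently. Once these identifications are in place the argument is essentially formal, and the footnote's remark that taking $h=f$, $g=\id$, $\beta=\id$ recovers the classical universal property is a sanity check that the marked edge has been placed correctly.
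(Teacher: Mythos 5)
Your overall strategy is the paper's: reduce to the two mapping-space lifting problems and the compatibility square, solve the inner-anodyne part by \hyperref[prop:cube_filling]{Proposition~\ref*{prop:cube_filling}}, handle the remaining cone-filling by transporting adjunct data across the adjunction furnished by the left-Kan hypothesis (\hyperref[prop:can_transport_adjunct_data_partial]{Proposition~\ref*{prop:can_transport_adjunct_data_partial}} and \hyperref[eg:partial_adjunctions_and_kan_extensions]{Example~\ref*{eg:partial_adjunctions_and_kan_extensions}}), and use the uniqueness clause to enforce compatibility. However, your combinatorial identifications of the mapping spaces are both swapped and off by one, and that matters. The poset $P_{1,n+1}$ of subsets of $\{1,\dots,n+1\}$ containing $1$ and $n+1$ has only $n-1$ free coordinates ($2,\dots,n$), so $\C[\Delta^{n+1}](1,n+1)\cong C^{n-1}$, and the data missing from $\C[\Lambda^{n+1}_0](1,n+1)$ is the interior of the face $d_0\Delta^{n+1}$, i.e.\ the \emph{full} boundary inclusion $\partial C^{n-1}\hookrightarrow C^{n-1}$, not a left box. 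Dually, $\C[\Delta^{n+1}](0,n+1)\cong C^{n}$ (not $C^{n+1}$), and it is \emph{this} inclusion $\C[\Lambda^{n+1}_0](0,n+1)\hookrightarrow\C[\Delta^{n+1}](0,n+1)$ that is $LC^n\hookrightarrow C^n$. Consequently \hyperref[prop:cube_filling]{Proposition~\ref*{prop:cube_filling}} applies to the $(0,n+1)$ problem, not the $(1,n+1)$ one; a full-boundary filling cannot be reduced to inner-anodyne pushouts on its own, and the $(1,n+1)$ problem is only solved indirectly, as the $\Fun(X_1,X_{n+1})$-side of the transport. Relatedly, the cone point of $\Delta^0\ast C^{n-1}\subseteq C^n\cong\C[\Delta^{n+1}](0,n+1)$ corresponds to the subset $\{0,n+1\}$ and so records $F=f_{n+1,0}$, not $f_!F$; the ``locally left adjoint'' hypothesis is therefore needed only at the single object $F$ (since $K_0$ is the cone point), which is exactly what the left-Kan assumption on $\tau$ gives via \hyperref[eg:partial_adjunctions_and_kan_extensions]{Example~\ref*{eg:partial_adjunctions_and_kan_extensions}}.

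With those identifications corrected, the order of operations also flips relative to what you wrote: one first applies \hyperref[prop:cube_filling]{Proposition~\ref*{prop:cube_filling}} to $(*)$ (the $(0,n+1)$ problem over $\Fun(X_0,X_{n+1})$) to reduce it to a filling of $\Delta^0\ast\partial C^{n-1}\hookrightarrow\Delta^0\ast C^{n-1}$ whose marked edge is the unit (not an equivalence, so not yet fillable); then transports across $f_{1,0}^{*}$ to $\Fun(X_1,X_{n+1})$, where the marked edge becomes an equivalence and \hyperref[prop:cube_filling]{Proposition~\ref*{prop:cube_filling}} does apply; and the resulting filling restricted to $C^{n-1}$ solves $(**)$, while transporting back solves $(*')$, making the compatibility square commute by construction. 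So your toolkit and plan are the right ones, but the identification of which mapping space carries which cube, and which one is the left box, needs fixing before the steps actually fit together.
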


\begin{note}
  The author would like to offer the friendly recommendation that in reading the proof below, it is helpful to follow along with the explanation given at the beginning of \hyperref[sec:horn_filling_via_left_kan_extensions]{Section~\ref*{sec:horn_filling_via_left_kan_extensions}}.
\end{note}

\begin{proof}
  We need to solve the lifting problems of \hyperref[eq:lifting_problem_1]{Equation~\ref*{eq:lifting_problem_1}} and \hyperref[eq:lifting_problem_2]{Equation~\ref*{eq:lifting_problem_2}}, and check that our solutions satisfy the condition of \hyperref[eq:lifting_problem_compatibility_condition]{Equation~\ref*{eq:lifting_problem_compatibility_condition}}. In the discussuion surrounding these equations, we ignored that the simplex $\tau\colon \Delta^{\{0,1,n+1\}}_{\flat} \to \ICCat$ was left Kan. We now reintroduce this information. The information that the simplex $\tau\colon \Delta^{\{0,1,n+1\}}_{\flat} \to \ICCat$ is left Kan tells us the following.
  \begin{itemize}
    \item  Concretely, it tells us that the map $f_{n,1}$ is the left Kan extension of $f_{n, 0}$ along $f_{1, 0}$, and that $\alpha_{\{0,n+1\}}^{\{0,1,n+1\}}\colon f_{n+1,0} \to f_{n+1, 1} \circ f_{1, 0} $ is the unit map.

    \item In particular, \hyperref[eg:partial_adjunctions_and_kan_extensions]{Example~\ref*{eg:partial_adjunctions_and_kan_extensions}} then tells us that the map $f_{1,0}^{*}$ admits a left adjoint at $f_{n+1,0}$.
  \end{itemize}

  The lifting problems as described above are somewhat unwieldy, given in terms of mapping spaces of simplicial categories $\C[\Delta^{n+1}]$ and $\C[\Lambda^{n+1}_{0}]$. It will be useful to give these mapping spaces more down-to-earth descriptions, in terms of simplicial sets with which we are familiar. We have the following succinct descriptions of the inclusions of \hyperref[eq:lifting_problem_1]{Equation~\ref*{eq:lifting_problem_1}} and \hyperref[eq:lifting_problem_2]{Equation~\ref*{eq:lifting_problem_2}} along which we have to extend.
  \begin{itemize}
    \item There is an isomorphism of simplicial sets
      \begin{equation*}
        \C[\Delta^{n+1}](0, n+1) \overset{\cong}{\to} C^{n}
      \end{equation*}
      specified completely by sending a subset $S \subseteq [n+1]$ to the point
      \begin{equation*}
        (z_{1}, \ldots, z_{n}) \in C^{n},\qquad z_{i} =
        \begin{cases}
          1, & i \in S \\
          0, &\text{otherwise.}
        \end{cases}
      \end{equation*}

      The inclusion $\C[\Lambda^{n+1}_{0}](0, n+1) \hookrightarrow \C[\Delta^{n+1}](0, n+1)$ corresponds to the simplicial subset $LC^{n} \hookrightarrow C^{n}$.

    \item There is a similar isomorphism of simplicial sets
      \begin{equation*}
        \C[\Delta^{n+1}](1, n+1) \overset{\cong}{\to} C^{n-1}
      \end{equation*}
      specified completely by sending $S \subseteq \{1, \ldots, n+1\}$ to the point
      \begin{equation*}
        (z_{1}, \ldots, z_{n-1}) \in C^{n-1},\qquad z_{i} =
        \begin{cases}
          1, & i+1 \in S \\
          0, &\text{otherwise.}
        \end{cases}
      \end{equation*}
      The inclusion $\C[\Lambda^{n+1}_{0}](1, n+1) \hookrightarrow \C[\Delta^{n+1}](1, n+1)$ corresponds to the inclusion $\partial C^{n-1} \hookrightarrow C^{n-1}$.
  \end{itemize}

  Using these descriptions, we can write our lifting problems in the more inviting form
  \begin{equation*}
    (*) =
    \begin{tikzcd}
      LC^{n}
      \arrow[r]
      \arrow[d, hook]
      & \Fun(X_{0}, X_{n+1})
      \\
      C^{n}
      \arrow[ur, dashed]
    \end{tikzcd},\qquad (**) =
    \begin{tikzcd}
      \partial C^{n-1}
      \arrow[r, "\tilde{\alpha}'"]
      \arrow[d, hook]
      & \Fun(X_{1}, X_{n+1})
      \\
      C^{n-1}
      \arrow[ur, dashed]
    \end{tikzcd},
  \end{equation*}
  and our condition becomes that the square
  \begin{equation*}
    (\star) =
    \begin{tikzcd}
      C^{n-1}
      \arrow[r]
      \arrow[d, hook]
      & \Fun(X_{1}, X_{n+1})
      \arrow[d, "f_{1,0}^{*}"]
      \\
      C^{n}
      \arrow[r]
      & \Fun(X_{0}, X_{n+1})
    \end{tikzcd}
  \end{equation*}
  commutes, where the left-hand vertical morphism is the inclusion of the right face.

  Using \hyperref[prop:cube_filling]{Proposition~\ref*{prop:cube_filling}}, we can partially solve the lifting problem $(*)$, reducing it to the lifting problem
  \begin{equation*}
    (*') =
    \begin{tikzcd}
      \Delta^{0} \ast \partial C^{n-1}
      \arrow[r, "\alpha"]
      \arrow[d, hook]
      & \Fun(X_{0}, X_{n+1})
      \\
      \Delta^{0} \ast C^{n-1}
      \arrow[ur, dashed]
    \end{tikzcd}.
  \end{equation*}
  The image of the 1-simplex $(0, \ldots, 0) \to (0, \ldots, 1)$ of $\Delta^{0} \ast \partial C^{n-1} \subseteq C^{n}$ under $\alpha$ is the unit map $\alpha^{\{0, 1, n+1\}}_{\{0, n+1\}}\colon f_{n+1, 0} \to f_{n+1, 1} \circ f_{1, 0}$. This is not in general an equivalence, so we cannot use \hyperref[prop:cube_filling]{Proposition~\ref*{prop:cube_filling}} to solve the lifting problem $(*')$ directly. However, the unit map is adjunct to an equivalence in $\Fun(X_{1}, X_{n+1})$ relative to $s = \id_{\Delta^{1}}$. Furthermore, the restriction of $\alpha$ to $\partial C^{n-1}$ is in the image of $f^{*}_{1,0}$, so by \hyperref[prop:can_transport_adjunct_data_partial]{Proposition~\ref*{prop:can_transport_adjunct_data_partial}} and our assumption that $f_{10}^{*}$ admits a left adjoint at $f_{n+1,0}$, we can augment the map $\tilde{\alpha}'$ to a map $\tilde{\alpha}\colon \Delta^{0} \ast \partial C^{n-1} \to \Fun(X_{1}, X_{n-1})$ which is adjunct to $\alpha$ relative to the map
  \begin{equation*}
    s\colon \Delta^{0} \ast \partial C^{n-1} \to \Delta^{1}
  \end{equation*}
  sending $\Delta^{0}$ to $\Delta^{\{0\}}$ and $\partial C^{n-1}$ to $\Delta^{\{1\}}$; in particular, the image of the morphism $(0, \ldots, 0) \to (0, \ldots 1)$ under $\tilde{\alpha}'$ is an equivalence. This allows us to replace the lifting problem $(**)$ by the superficially more complicated lifting problem
  \begin{equation*}
    (**') =
    \begin{tikzcd}
      \Delta^{0} \ast \partial C^{n-1}
      \arrow[r, "\tilde{\alpha}"]
      \arrow[d, hook]
      & \Fun(X_{1}, X_{n+1})
      \\
      \Delta^{0} \ast C^{n-1}
      \arrow[ur, dashed, swap, "\tilde{\beta}"]
    \end{tikzcd}
  \end{equation*}
  However, since image of $(0, \ldots 0) \to (0, \ldots, 1)$ is an equivalence, \hyperref[prop:cube_filling]{Proposition~\ref*{prop:cube_filling}} implies that we can solve the lifting problem $(**')$. Again using (the dual to) \hyperref[prop:can_transport_adjunct_data_partial]{Proposition~\ref*{prop:can_transport_adjunct_data_partial}}, we can transport this filling to a solution to the lifting problem $(*')$. The condition $(\star)$ amounts to demanding that the restriction $\beta|_{C^{n-1}}$ be the image of $\tilde{\beta}|_{C^{n-1}}$ under $f_{1,0}^{*}$, which is true by construction.
\end{proof}

\subsection{A few other tricks with Kan extensions}
\label{ssc:a_few_other_tricks_with_kan_extensions}

We end this section with a few miscellaneous tricks which we can play with maps $\Delta^{n} \to \ICCat$ involving left Kan simplices.

\begin{lemma}
  \label{lemma:transport_left_kan_simplices}
  Denote by $\mathcal{E} \subseteq \Hom(\Delta^{2}, \Delta^{3})$ the subset containing all degenerate $2$-simplices together with the simplices $\Delta^{\{0,2,3\}}$ and $\Delta^{\{1,2,3\}}$. Let $\sigma\colon \Delta^{3}_{\mathcal{E}} \to \ICCat$ such that $\Delta^{\{2,3\}}$ is mapped to an equivalence. Then $\sigma|\Delta^{\{0,1,2\}}$ is left Kan if and only if $\sigma|\Delta^{\{0,1,3\}}$ is left Kan.
\end{lemma}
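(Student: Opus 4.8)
The plan is to unwind what $\sigma$ gives us. A map $\sigma\colon \Delta^{3}_{\mathcal{E}} \to \ICCat$ with the indicated marking amounts, after rigidification, to $\infty$-categories $X_{0}, X_{1}, X_{2}, X_{3}$, functors $f_{ji}\colon X_{i} \to X_{j}$ for $i \le j$, and coherence $2$-cells; the hypothesis that $\Delta^{\{2,3\}}$ is sent to an equivalence says $f_{32}\colon X_{2} \to X_{3}$ is an equivalence. Writing $g = f_{32}$, the $2$-simplex $\sigma|\Delta^{\{0,1,2\}}$ records a natural transformation $\eta$ exhibiting (or not) $f_{21}$ as the left Kan extension of $f_{20}$ along $f_{10}$, while $\sigma|\Delta^{\{0,1,3\}}$ records a transformation $\eta'$ relating $f_{31}$, $f_{30}$, $f_{10}$. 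The faces $\Delta^{\{0,2,3\}}$ and $\Delta^{\{1,2,3\}}$ — being in $\mathcal{E}$, hence ``thin'' — tell us that $f_{30} \simeq g \circ f_{20}$ and $f_{31} \simeq g \circ f_{21}$ compatibly, i.e. postcomposition with $g$ carries the $\{0,1,2\}$-face data to the $\{0,1,3\}$-face data up to coherent homotopy.

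**The core argument.** The key input is Proposition~\ref{prop:kan_ext_invariance_of_target}: if $H\colon \category{C} \to \category{D}$ is an equivalence, then $\eta$ exhibits $G$ as a left Kan extension of $F$ along $f$ if and only if $H\eta$ exhibits $H \circ G$ as a left Kan extension of $H \circ F$ along $f$. Applying this with $H = g$, $F = f_{20}$, $G = f_{21}$, $f = f_{10}$, we get that $\eta$ is left Kan if and only if $g\eta$ is left Kan, where $g\eta\colon g \circ f_{20} \Rightarrow g \circ f_{21} \circ f_{10}$. It then remains to identify $g\eta$ with $\eta'$ (up to the homotopy-invariance of Proposition~\ref{prop:homotopy_invariance_of_witness}): both are obtained by pasting the $2$-cell on $\Delta^{\{0,1,2\}}$ with the thin faces $\Delta^{\{0,2,3\}}$ and $\Delta^{\{1,2,3\}}$, and the fact that the full $3$-simplex $\sigma$ exists means precisely that these pasting composites agree inside $\Fun(X_{0}, X_{3})$. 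Concretely, one reads off from $\sigma$, restricted to the relevant mapping space $\C[\Delta^{3}](0,3) \cong C^{2}$, a homotopy between $\eta' = \alpha^{\{0,1,3\}}_{\{0,3\}}$ and the composite $\alpha^{\{0,1,2,3\}}_{\{0,2,3\}} \ast \alpha^{\{0,1,2\}}_{\{0,2\}}$, which (using that $\Delta^{\{0,2,3\}}$ and $\Delta^{\{1,2,3\}}$ are thin) is homotopic to $g\eta$.

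**Structuring the write-up.** I would organize this as: (1) extract the concrete data and the identification $g = f_{32}$ an equivalence; (2) cite Proposition~\ref{prop:kan_ext_invariance_of_target} to reduce ``$\eta$ left Kan'' to ``$g\eta$ left Kan''; (3) use the existence of the $3$-simplex together with the thinness of the two designated faces to produce a homotopy $g\eta \simeq \eta'$; (4) invoke Proposition~\ref{prop:homotopy_invariance_of_witness} to conclude. Since the statement is symmetric in a suitable sense — $g$ is an equivalence, so one could equally run the argument with $g^{-1}$ — both directions of the ``if and only if'' follow at once from step (2).

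**The main obstacle.** The genuinely fiddly part is step (3): making precise that the pasting of $\sigma|\Delta^{\{0,1,2\}}$ with the thin faces really computes $g \cdot \eta$ rather than merely something equivalent to the $\{0,1,3\}$-face, and tracking the homotopies at the level of the rigidified mapping space $C^{2}$. This is a diagram chase in $\Fun(X_{0}, X_{3})$ of exactly the type carried out in Subsection~\ref{ssc:left_kan_implies_globally_left_kan}; the thinness hypotheses on $\Delta^{\{0,2,3\}}$ and $\Delta^{\{1,2,3\}}$ are what guarantee the relevant comparison edges in $C^{2}$ are equivalences, so that the homotopy can be produced. Everything else is a direct appeal to the cited remarks from \cite{kerodon}.
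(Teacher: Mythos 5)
Your proposal is correct and takes essentially the same route as the paper: both reduce to Proposition~\ref{prop:kan_ext_invariance_of_target} via the equivalence $f_{32}$, using Proposition~\ref{prop:homotopy_invariance_of_witness} to replace the thin faces $\Delta^{\{0,2,3\}}$ and $\Delta^{\{1,2,3\}}$ by strict compositions and identify $\eta'$ with $g\eta$ up to homotopy. You merely spell out the bookkeeping (identifying the pasting in $\C[\Delta^{3}](0,3) \cong C^{2}$) that the paper leaves implicit.
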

\begin{proof}
  Replacing thin simplices by strict compositions using \hyperref[prop:homotopy_invariance_of_witness]{Proposition~\ref*{prop:homotopy_invariance_of_witness}}, this statement reduces to that of \hyperref[prop:kan_ext_invariance_of_target]{Proposition~\ref*{prop:kan_ext_invariance_of_target}}.
\end{proof}

The next lemma is similar but easier.
\begin{lemma}
  \label{lemma:transport_thin_simplices}
  Denote by $\mathcal{E} \subseteq \Hom(\Delta^{2}, \Delta^{3})$ the subset containing all degenerate $2$-simplices together with the simplices $\Delta^{\{0,2,3\}}$ and $\Delta^{\{1,2,3\}}$. Let $\sigma\colon \Delta^{3}_{\mathcal{E}} \to \ICCat$ such that $\Delta^{\{2,3\}}$ is mapped to an equivalence. Then $\sigma|\Delta^{\{0,1,2\}}$ is thin if and only if $\sigma|\Delta^{\{0,1,3\}}$ is thin.
\end{lemma}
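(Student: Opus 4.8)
The plan is to extract from the $3$-simplex $\sigma$ a homotopy-commutative square in the mapping $\infty$-category $\Fun(X_{0}, X_{3})$ and then run a two-out-of-three argument, using that post-composition with an equivalence of $\infty$-categories is conservative. Write $X_{i} = \sigma(i)$, write $f_{ji}\colon X_{i}\to X_{j}$ for the functor $\sigma(\Delta^{\{i,j\}})$, and write $\theta_{ijk}\colon f_{ki}\Rightarrow f_{kj}\circ f_{ji}$ for the natural transformation classified by the face $\sigma|\Delta^{\{i,j,k\}}$. Unwinding definitions, the hypothesis that $\sigma|\Delta^{\{0,1,2\}}$ (resp.\ $\sigma|\Delta^{\{0,1,3\}}$) is thin is exactly the assertion that $\theta_{012}$ (resp.\ $\theta_{013}$) is an equivalence, and the hypothesis on $\Delta^{\{2,3\}}$ says that $f_{32}$ is an equivalence of $\infty$-categories.

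First I would unwind the rigidification $\C[\Delta^{3}]\to\QCat$ classifying $\sigma$, exactly as in \hyperref[ssc:left_kan_implies_globally_left_kan]{Subsection~\ref*{ssc:left_kan_implies_globally_left_kan}}: the $n = 2$ instance of the isomorphism $\C[\Delta^{n+1}](0, n+1)\cong C^{n}$ used there presents the component of the rigidification at $(0,3)$ as a map $\Delta^{1}\times\Delta^{1}\to\Fun(X_{0}, X_{3})$, that is, as a homotopy-commutative square
\begin{equation*}
  \begin{tikzcd}
    f_{30}
    \arrow[r, "\theta_{013}"]
    \arrow[d, swap, "\theta_{023}"]
    & f_{31}\circ f_{10}
    \arrow[d, "\theta_{123}f_{10}"]
    \\
    f_{32}\circ f_{20}
    \arrow[r, swap, "f_{32}\theta_{012}"]
    & f_{32}\circ f_{21}\circ f_{10}
  \end{tikzcd},
\end{equation*}
in which $\theta_{123}f_{10}$ denotes the restriction of $\theta_{123}$ along $f_{10}$ and $f_{32}\theta_{012}$ denotes the post-composition of $\theta_{012}$ with $f_{32}$.

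The hypotheses now enter. Since $\sigma|\Delta^{\{0,2,3\}}$ and $\sigma|\Delta^{\{1,2,3\}}$ are thin, $\theta_{023}$ and $\theta_{123}$ are equivalences; hence so is $\theta_{123}f_{10}$, because restriction along $f_{10}$ is a functor $\Fun(X_{1}, X_{3})\to\Fun(X_{0}, X_{3})$ and therefore preserves equivalences. Since $f_{32}$ is an equivalence, post-composition $f_{32}\circ(-)\colon\Fun(X_{0}, X_{2})\to\Fun(X_{0}, X_{3})$ is an equivalence of $\infty$-categories, hence conservative, so $f_{32}\theta_{012}$ is an equivalence if and only if $\theta_{012}$ is. Finally, commutativity of the square yields $(\theta_{123}f_{10})\circ\theta_{013}\simeq(f_{32}\theta_{012})\circ\theta_{023}$ in $\Fun(X_{0}, X_{3})$; since $\theta_{123}f_{10}$ and $\theta_{023}$ are already known to be equivalences, two-out-of-three shows that $\theta_{013}$ is an equivalence if and only if $f_{32}\theta_{012}$ is, i.e.\ if and only if $\theta_{012}$ is. This is precisely the claimed equivalence of conditions.

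I expect the only real work to be the bookkeeping of the first step---correctly identifying the four edges of the square $\C[\Delta^{3}](0,3)\cong C^{2}\to\Fun(X_{0}, X_{3})$ with $\theta_{013}$, $\theta_{023}$, $\theta_{123}f_{10}$, and $f_{32}\theta_{012}$---since after that the argument is a pure two-out-of-three computation. An essentially equivalent route, closer to the proof of \hyperref[lemma:transport_left_kan_simplices]{Lemma~\ref*{lemma:transport_left_kan_simplices}}, is to rectify the thin faces $\Delta^{\{0,2,3\}}$ and $\Delta^{\{1,2,3\}}$ to strict compositions (so that $\theta_{023}$ and $\theta_{123}$ become identities), whereupon the coherence $3$-cell of $\sigma$ collapses to a homotopy $\theta_{013}\simeq f_{32}\theta_{012}$, and then conservativity of $f_{32}\circ(-)$ plays the role that \hyperref[prop:kan_ext_invariance_of_target]{Proposition~\ref*{prop:kan_ext_invariance_of_target}} played there.
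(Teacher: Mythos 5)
Your argument is correct and matches the paper's intent: the paper leaves this lemma unproved, noting only that it is ``similar but easier'' than Lemma~\ref{lemma:transport_left_kan_simplices}, whose proof strictifies the thin faces and then invokes invariance of the relevant property under post-composition with an equivalence. Your two-out-of-three argument in the homotopy-commutative square $\C[\Delta^{3}](0,3)\cong C^{2}\to\Fun(X_{0},X_{3})$ accomplishes the same thing without the cosmetic strictification step, and the ``alternative route'' you sketch at the end is precisely the paper's intended proof, with conservativity of $f_{32}\circ(-)$ playing the role of \hyperref[prop:kan_ext_invariance_of_target]{Proposition~\ref*{prop:kan_ext_invariance_of_target}}.
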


\begin{lemma}
  \label{lemma:compose_left_kan_simplices}
  Denote by $\mathcal{E}' \subseteq \Hom(\Delta^{2}, \Delta^{3})$ the subset containing all degenerate $2$-simplices, together with the simplex $\Delta^{\{0,1,2\}}$. Let $\sigma\colon \Delta^{3}_{\mathcal{F}} \to \ICCat$ such that $\sigma|\Delta^{\{0,1,3\}}$ is left Kan. Then $\sigma|\Delta^{\{0,2,3\}}$ is left Kan if and only if $\sigma|\Delta^{\{1,2,3\}}$ is left Kan.
\end{lemma}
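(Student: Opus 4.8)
The plan is to strip the coherence data off $\sigma$ and reduce to Proposition~\ref{prop:kan_extend_along_composition}. Applying $\Csc \dashv \Nsc$ and rectifying, write $X, Y, Z, \category{C}$ for the images of the vertices $0,1,2,3$; write $f\colon X\to Y$, $g\colon Y\to Z$, $H\colon Z\to\category{C}$, $k\colon X\to Z$ for the images of the edges $\{0,1\}$, $\{1,2\}$, $\{2,3\}$, $\{0,2\}$; and $F\colon X\to\category{C}$, $G\colon Y\to\category{C}$ for the images of $\{0,3\}$, $\{1,3\}$. The $2$-faces $\Delta^{\{0,1,3\}}$, $\Delta^{\{1,2,3\}}$, $\Delta^{\{0,2,3\}}$, $\Delta^{\{0,1,2\}}$ then carry natural transformations
\[
  \alpha\colon F \Rightarrow G\circ f,\qquad \beta\colon G \Rightarrow H\circ g,\qquad \delta\colon F \Rightarrow H\circ k,\qquad \mu\colon k \Rightarrow g\circ f,
\]
and $\mu$ is an equivalence because $\Delta^{\{0,1,2\}}$ is thin. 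Restricting $\sigma$ to the mapping space $\C[\Delta^{3}](0,3) \cong \Delta^{1}\times\Delta^{1}$ produces a commuting square in $\Fun(X,\category{C})$ with edges $\alpha$, $\delta$, $\beta f$ and $H\mu$, and its commutativity is precisely a homotopy $H\mu \circ \delta \simeq \beta f \circ \alpha$ in $\Fun(X,\category{C})$.

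Now unwind the statement via Definitions~\ref{def:left_kan} and~\ref{def:nat_xfo_exhibiting_left_kan_ext}. The standing hypothesis that $\sigma|\Delta^{\{0,1,3\}}$ is left Kan says that $\alpha$ exhibits $G$ as the left Kan extension of $F$ along $f$; the two conditions to be compared are that $\beta$ exhibits $H$ as the left Kan extension of $G$ along $g$ (i.e.\ $\sigma|\Delta^{\{1,2,3\}}$ left Kan) and that $\delta$ exhibits $H$ as the left Kan extension of $F$ along $k$ (i.e.\ $\sigma|\Delta^{\{0,2,3\}}$ left Kan). Since $\alpha$ already exhibits $G$ as a left Kan extension, Proposition~\ref{prop:kan_extend_along_composition} gives that $\sigma|\Delta^{\{1,2,3\}}$ is left Kan if and only if $\beta f \circ \alpha$ exhibits $H$ as the left Kan extension of $F$ along $g\circ f$; the homotopy $H\mu\circ\delta\simeq\beta f\circ\alpha$ together with Proposition~\ref{prop:homotopy_invariance_of_witness} makes this equivalent to the statement that $H\mu\circ\delta$ exhibits $H$ as the left Kan extension of $F$ along $g\circ f$; and since $\mu\colon k\to g\circ f$ is an equivalence, this is in turn equivalent to the statement that $\delta$ exhibits $H$ as the left Kan extension of $F$ along $k$, i.e.\ that $\sigma|\Delta^{\{0,2,3\}}$ is left Kan. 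Concatenating these equivalences proves the lemma.

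The one point requiring care — and the main obstacle — is the passage between the weak data of $\sigma$ (thin face, coherence square) and the strict hypotheses of Proposition~\ref{prop:kan_extend_along_composition}, and in particular the last equivalence above, which transports the left Kan property along the equivalence $\mu$ between $k$ and $g\circ f$. I would dispose of this as in the proof of Lemma~\ref{lemma:transport_left_kan_simplices}: since $\Delta^{\{0,1,2\}}$ is thin one may first replace $\sigma$ by a homotopic $3$-simplex in which the edge $\{0,2\}$ is the strict composite $g\circ f$ and $\Delta^{\{0,1,2\}}$ is the corresponding identity coherence $2$-simplex, so that $\mu=\id$ and the transport step becomes vacuous; the remaining work is to see that the left Kan property of the face $\Delta^{\{0,2,3\}}$ is unchanged under this replacement, which amounts to the observation that the defining condition of Definition~\ref{def:nat_xfo_exhibiting_left_kan_ext} depends on the functor along which one extends only up to equivalence — the slices $X_{/z}$ of Notation~\ref{notation:rund_um_undercategories} and the comparison maps $X_{/z}\to X^{/z}$ being invariant under replacing $k$ by an equivalent functor. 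Granting this routine invariance, the remainder of the argument is the purely formal chain of equivalences recorded above.
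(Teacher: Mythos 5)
Your proof takes the same route as the paper's: reduce to Proposition~\ref{prop:kan_extend_along_composition} by identifying the natural transformations carried by the faces, reading off the coherence homotopy $H\mu\circ\delta \simeq \beta f\circ\alpha$ from $\C[\Delta^{3}](0,3)$, strictifying the thin face so that $\mu=\id$, and invoking Proposition~\ref{prop:homotopy_invariance_of_witness}. You are also right to flag (and the paper's one-line proof glosses over) the one genuinely nontrivial step, namely that the left Kan property of the witness $\delta\colon F\Rightarrow H\circ k$ must be transported to the witness $H\mu\circ\delta\colon F\Rightarrow H\circ(g\circ f)$ along the equivalence $\mu\colon k\Rightarrow g\circ f$; this is an invariance of the Kan-extension condition under equivalence of the functor one extends along, and it is not literally Proposition~\ref{prop:homotopy_invariance_of_witness}, which only allows homotoping $\eta$ with $f$ held fixed. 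Your sketch of this transport is a little loose where you say the slices $X_{/z}$ are ``invariant'' under replacing $k$ by an equivalent functor --- they are only \emph{equivalent} as quasicategories (the two pullbacks $X\times_{Z}Z_{/z}$ agree up to a categorical equivalence compatible with $\pi$ and $\alpha$, since $Z_{/z}\to Z$ is an isofibration), and one must then invoke that the colimit condition in Definition~\ref{def:nat_xfo_exhibiting_left_kan_ext} is cofinality-invariant. Granting that routine lemma, your argument is complete and matches the paper's intent. (You also silently corrected the direction of $\beta$ in the cited statement of Proposition~\ref{prop:kan_extend_along_composition}, which as printed reads $\beta\colon H\circ g\Rightarrow G$ but must be $\beta\colon G\Rightarrow H\circ g$ for the conclusion to typecheck.)
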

\begin{proof}
  Replacing thin simplices by strict compositions using \hyperref[prop:homotopy_invariance_of_witness]{Proposition~\ref*{prop:homotopy_invariance_of_witness}}, this reduces to \hyperref[prop:kan_extend_along_composition]{Propsition~\ref*{prop:kan_extend_along_composition}}.
\end{proof}

\section{Local systems}
\label{sec:local_systems}

For any $\infty$-category $\category{C}$, a $\category{C}$-local system on some space $X$ is simply a functor $X \to \category{C}$. Thus, the $\infty$-category of $\category{C}$-local systems on $X$ is simply $\LS(\category{C})_{X}:=\Fun(X, \category{C})$. We would like to consider local systems on all spaces $X$ simultaneously, defining an $\infty$-category $\LS(\category{C})$ of $\category{C}$-local systems.

We follow the general strategy laid out in \cite{luriehopkins2013ambidexterity}. We will consider a cartesian fibration 
\begin{equation*}
  p'\colon\smallint\Fun(-, -) \to \ICat \times \ICat\op
\end{equation*}
which classifies the functor
\begin{equation*}
  \Fun(-,-)\colon \ICat\op \times \ICat \to \ICat;\qquad (\category{C}, \category{D}) \mapsto \Fun(\category{C}, \category{D}).
\end{equation*}
The strict pullback $p$ of $p'$ in the diagram
\begin{equation*}
  \begin{tikzcd}
    \LS(\category{C})
    \arrow[r]
    \arrow[d, swap, "p"]
    & \int\Fun(-, -)
    \arrow[d, "p'"]
    \\
    \S \times \{\category{C}\}
    \arrow[r]
    & \ICat \times \ICat\op
  \end{tikzcd}
\end{equation*}
will then classify the functor
\begin{equation*}
  \S \to \ICat;\qquad X \mapsto \LS(\category{C})_{X}.
\end{equation*}
The total space $\LS(\category{C})$ of $p$ will thus be our candidate for our $\infty$-category of local systems. The fibration $p$ remembers the source of the local system.

In \cite{garcia2020enhanced}, a suitable model for $\int\Fun(-, -)$ is given: the so-called \emph{enhanced twisted arrow category} $\Tw(\ICCat)$. In \hyperref[ssc:the_twisted_arrow_category]{Subsection~\ref*{ssc:the_twisted_arrow_category}}, we reproduce the pertinent points of \cite{garcia2020enhanced}, defining the enhanced twisted arrow category. In \hyperref[ssc:the_infinity_category_of_local_systems]{Subsection~\ref*{ssc:the_infinity_category_of_local_systems}}, we use the enhanced twisted arrow category to define the category $\LS(\category{C})$ of local systems, together with a cartesian fibration $p\colon \LS(\category{C}) \to \S$ classifying the functor
\begin{equation*}
  \S\op \to \ICat;\qquad X \to \Fun(X, \category{C}).
\end{equation*}
We note that this functor sends a map of spaces $f\colon X \to Y$ to the pullback functor
\begin{equation*}
  f^{*}\colon \Fun(Y, \category{C}) \to \Fun(X, \category{C}).
\end{equation*}
If $\category{C}$ is cocomplete, each functor $f^{*}$ has a left adjoint $f_{!}$ given by left Kan extension. By abstract nonsense, the cartesian fibration $p$ is also a cocartesian fibration, whose cocartesian edges correspond to left Kan extension. In \hyperref[ssc:the_fibration]{Subsection~\ref*{ssc:the_fibration}} we will show this explicitly, using results built up in \hyperref[ssc:two_lemmas_about_marked_scaled_anodyne_morphisms]{Subsection~\ref*{ssc:two_lemmas_about_marked_scaled_anodyne_morphisms}} and \hyperref[ssc:the_infinity_category_of_local_systems]{Subsection~\ref*{ssc:the_infinity_category_of_local_systems}}; if the reader is willing to take this on faith, these sections can be safely skipped.

\begin{note}
  The reader may notice that we are being inefficient here. If we were only interested in the construction outlined above, then using the twisted arrow category would be much more combinatorially strenuous than necessary; the objects of the twisted arrow category $\Tw(\ICCat)$ are functors of $\infty$-categories $\category{D} \to \category{C}$, with $\category{C}$ and $\category{D}$ both allowed to vary. If, in the very next breath, we fix the target $\category{C}$, then it makes more sense to use some sort of overcategory $(\ICCat)_{/\category{C}}$ from the beginning. There is a method to our madness; in \hyperref[sec:the_monoidal_construction]{Section~\ref*{sec:the_monoidal_construction}}, we will introduce a monoidal version of this construction, and here we will need to allow the targets of our functors to vary.
\end{note}

\subsection{The enhanced twisted arrow category}
\label{ssc:the_twisted_arrow_category}

For a 2-category $\mathsf{C}$, the twisted arrow 1-category $\Tw(\mathsf{C})$ has the following description.
\begin{itemize}
  \item The objects of $\Tw(\mathsf{C})$ are the morphisms of $\mathsf{C}$:
    \begin{equation*}
      f\colon c \to c'.
    \end{equation*}

  \item For two objects $(f\colon c \to c')$ and $(g\colon d \to d')$, the morphisms $f \to g$ are given by diagrams
    \begin{equation*}
      \begin{tikzcd}
        c
        \arrow[r, "a"]
        \arrow[d, "f"{swap}, ""{name=L}]
        & d
        \arrow[d, "g", ""{name=R, swap}]
        \\
        c'
        & d'
        \arrow[l, "a'"]
        \arrow[from=L, to=R, Rightarrow, shorten=2ex, "\alpha"]
      \end{tikzcd},
    \end{equation*}
    where $\alpha$ is a 2-morphism $f \Rightarrow a' \circ g \circ a$.

  \item The composition of morphisms is given by concatenating the corresponding diagrams.
    \begin{equation*}
      \begin{tikzcd}
        c
        \arrow[r, "a"]
        \arrow[d, "f"{swap}, ""{name=L}]
        & d
        \arrow[d, "g"{description}, ""{name=LM}, ""{name=RM, swap}]
        \arrow[r, "b"]
        & e
        \arrow[d, "h", ""{name=R, swap}]
        \\
        c'
        & d'
        \arrow[l, "a'"]
        & e'
        \arrow[l, "b'"]
        \arrow[from=L, to=RM, Rightarrow, shorten=2ex, "\alpha"]
        \arrow[from=LM, to=R, Rightarrow, shorten=2ex, "\beta"]
      \end{tikzcd}
    \end{equation*}
\end{itemize}

In \cite{garcia2020enhanced}, a homotopy-coherent version of the twisted arrow category of an $\infty$-bicategory is defined. For any $\infty$-bicategory $\CC$, the twisted arrow $\infty$-category of $\CC$, denoted $\Tw(\CC)$, is a quasicategory whose $n$-simplices are diagrams $\Delta^{n} \star (\Delta^{n})\op \to \CC$, together with a scaling to ensure that the information encoded in an $n$-simplex is determined, up to contractible choice, by data
\begin{equation*}
  \overbrace{
    \begin{tikzcd}[ampersand replacement=\&]
      c_{0}
      \arrow[r, "a_{0}"]
      \arrow[d, "f_{0}"{swap}, ""{name=L1}]
      \& c_{1}
      \arrow[d, "f_{1}"{description}, ""{swap, name=R1}]
      \arrow[r, dotted]
      \& \cdots
      \arrow[r, dotted]
      \& c_{n-1}
      \arrow[r, "a_{n-1}"]
      \arrow[d, "f_{n-1}"{description}, ""{name=Ln}]
      \& c_{n}
      \arrow[d, "f_{n}", ""{swap, name=Rn}]
      \\
      c'_{0}
      \& c'_{1}
      \arrow[l, "a'_{0}"]
      \& \cdots
      \arrow[l, dotted]
      \& c'_{n-1}
      \arrow[l, dotted]
      \& c'_{n}
      \arrow[l, "a'_{n-1}"]
      \arrow[from=L1, to=R1, Rightarrow, "\alpha_{0}", shorten=2ex]
      \arrow[from=Ln, to=Rn, Rightarrow, "\alpha_{n-1}", shorten=2ex]
    \end{tikzcd}
  }^{n\text{ squares}}
\end{equation*}
in $\CC$. Here, the top row of morphisms corresponds to the spine of the $n$-simplex $\Delta^{n} \subset \Delta^{n} \star (\Delta^{n})\op$, and the bottom row of morphisms to the spine of $(\Delta^{n})\op \subset \Delta^{n} \star (\Delta^{n})\op$. To make this correspondence more clear, we will introduce the following notation.

\begin{notation}
  For $i \in [n] \subset [2n+1]$, we will write $\overline{i} := 2n+1-i$. Thus, $\overline{0} = 2n+1$, $\overline{1} = 2n$, etc.
\end{notation}

Thus, the $i$th column in the above diagram corresponds to the image of the morphism $i \to \overline{i}$ in $\Delta^{n} \star (\Delta^{n})\op$.

The scaling mentioned above is defined in the following way.

\begin{definition}
  We define a cosimplicial object $\tilde{Q}\colon \Delta \to \SSetsc$ by sending
  \begin{equation*}
    \tilde{Q}([n]) = (\Delta^{n} \star (\Delta^{n})\op, \dagger),
  \end{equation*}
  where $\dagger$ is the scaling consisting of all degenerate 2-simplices, together with all 2-simplices of the following kinds:
  \begin{enumerate}
    \item All simplices $\Delta^{2} \to \Delta^{n} \star (\Delta^{n})\op$ factoring through $\Delta^{n}$.

    \item All simplices $\Delta^{2} \to \Delta^{n} \star (\Delta^{n})\op$ factoring through $(\Delta^{n})\op$.

    \item All simplices $\Delta^{\{i, j, \overline{k}\}} \subseteq \Delta^{n} \star (\Delta^{n})\op$, $i < j \leq k$.

    \item All simplices $\Delta^{\{k, \overline{j}, \overline{i}\}} \subseteq \Delta^{n} \star (\Delta^{n})\op$, $i < j \leq k$.
  \end{enumerate}
\end{definition}

Note that $\Delta^{n} \star (\Delta^{n})\op \cong \Delta^{2n+1}$. We will use this identification freely.

This extends to a nerve-realization adjunction
\begin{equation*}
  Q : \SSet \longleftrightarrow \SSetsc : \Tw,
\end{equation*}
where the functor $Q$ is the extension by colimits of the functor $\tilde{Q}$, and for any scaled simplicial set $X$, the simplicial set $\Tw(X)$ has $n$-simplices
\begin{equation*}
  \Tw(\CC)_{n} = \Hom_{\SSetsc}(Q(\Delta^{n}), X).
\end{equation*}

\begin{notation}
  For any simplicial set $X$, $Q(X)$ carries the scaling given simplex-wise by that described above. We will denote this also by $\dagger$. Using the identification $\Delta^{n} \star (\Delta^{n})\op \cong \Delta^{2n+1}$, we can write $Q(\Delta^{n})$ explicitly and compactly as $\Delta^{2n+1}_{\dagger}$, which we will often do.
\end{notation}

The inclusion $\Delta^{n} \amalg (\Delta^{n})\op \hookrightarrow \Delta^{n} \star (\Delta^{n})\op$ provides, for any $\infty$-bicategory $\CC$ with underlying $\infty$-category $\category{C}$, a morphism of simplicial sets
\begin{equation*}
  \Tw(\CC) \to \category{C} \times \category{C}\op.
\end{equation*}
In \cite{garcia2020enhanced}, the following is shown.

\begin{theorem}
  \label{thm:mainthm_walker_fernando}
  Let $\CC$ be an $\infty$-bicategory (presented as a fibrant scaled simplicial set), and let $\category{C}$ be the underlying $\infty$-category (presented as a quasicategory). Then the map
  \begin{equation*}
    p_{\CC}\colon \Tw(\CC) \to \category{C} \times \category{C}\op.
  \end{equation*}
  is a cartesian fibration between quasicategories, and a morphism $f\colon \Delta^{1} \to \Tw(\CC)$ is $p_{\CC}$-cartesian if and only if the morphism $\sigma\colon \Delta^{3}_{\dagger} \to \CC$ to which it is adjunct is fully scaled, i.e.\ factors through the map $\Delta^{3}_{\dagger} \to \Delta^{3}_{\sharp}$. Furthermore, the cartesian fibration $p_{\CC}$ classifies the functor
  \begin{equation*}
    \Map_{\CC}(-, -)\colon \category{C}\op \times \category{C} \to \ICat.
  \end{equation*}
\end{theorem}

%

\subsection{Two lemmas about marked-scaled anodyne morphisms}
\label{ssc:two_lemmas_about_marked_scaled_anodyne_morphisms}

In the section following this one, we provide a fairly explicit construction of a family of marked-scaled anodyne inclusions. Constructing these inclusions directly from the classes of generating marked-scaled anodyne inclusions given in \hyperref[def:ms-anodyne_morphisms]{Definition~\ref*{def:ms-anodyne_morphisms}} would be possible but tedious. To lighten this load somewhat, we reproduce in this section two lemmas from \cite{garcia2cartesianfibrationsii}. The majority of this section is a retelling of \cite[Sec.~2.3~and~2.4]{garcia2cartesianfibrationsii}. Note however that our needs will be rather different than those of \cite{garcia2cartesianfibrationsii}, and this is reflected in some minor differences of notation.

We first need a compact notation for specifying simplicial subsets of $\Delta^{n}$. Denote by $P\colon \Set \to \Set$ the power set functor.

\begin{definition}
  Let $T$ be a finite linearly ordered set, so $T \cong [n]$ for some $n \in \N$. For any subset $\mathcal{A} \subseteq  P(T)$, define a simplicial subset
  \begin{equation*}
    \S^{\mathcal{A}}_{T} := \bigcup_{S \in \mathcal{A}} \Delta^{T \smallsetminus S} \subseteq \Delta^{T}.
  \end{equation*}
\end{definition}

\begin{notation}
  If the linearly ordered set $T$ is clear from context, we will drop it, writing $\S^{\mathcal{A}}$.
\end{notation}

\begin{note}
  \label{note:correspondence_simplicial_subsets_power_set}
  The assignment $\mathcal{A} \mapsto \S^{\mathcal{A}}$ does not induce is not a one-to-one correspondence between subsets $\mathcal{A} \subseteq P([n])$ and simplicial subsets $S^{\mathcal{A}} \subseteq \Delta^{n}$, since for any two subsets $S \subsetneq S' \subset [n]$, we have that $\Delta^{[n] \smallsetminus S'} \subsetneq \Delta^{[n] \smallsetminus S}$.
\end{note}

\begin{example}
  Take $T = [2]$. Taking $\mathcal{A} = \{\{2\}, \{1,2\}\}$ gives the simplicial subset $\S^{\mathcal{A}}_{T} = \Delta^{\{0,1\}} \cup \Delta^{\{0\}} = \Delta^{\{0,1\}} \subset \Delta^{2}$, as does taking $\mathcal{A} = \{\{2\}\}$.
\end{example}

It will be useful to consider two subsets $\mathcal{A}$ and $\mathcal{A}' \subseteq P(T)$ to be equivalent if they produce the same simplicial subset of $\Delta^{T}$.

\begin{definition}
  We will write $\mathcal{A} \sim \mathcal{A}'$ if $\S^{\mathcal{A}} = \S^{\mathcal{A}'}$.
\end{definition}

The relation $\sim$ is an equivalence relation, but we will not use this.

The set $P([n])$ forms a poset, ordered by inclusion, and any $\mathcal{A} \subseteq P([n])$ a subposet. An element $q$ of a poset $Q$ is said to be \emph{minimal} if there are no elements of $Q$ which are strictly less than $q$. By \hyperref[note:correspondence_simplicial_subsets_power_set]{Note~\ref*{note:correspondence_simplicial_subsets_power_set}} only the minimal elements of $\mathcal{A}$ contribute to the union defining $\S^{\mathcal{A}}$. We have just shown the following.

\begin{lemma}
  \label{lemma:replace_poset_by_minimal_elements}
  For any $\mathcal{A} \subseteq P([n])$, we have $\mathcal{A} \sim \mathrm{min}(\mathcal{A})$, where $\mathrm{min}(\mathcal{A}) \subseteq \mathcal{A}$ is the set of minimal elements of $\mathcal{A}$.
\end{lemma}

Simplicial subsets of the form $\S^{\mathcal{A}}_{T}$ enjoy the following easily-proved calculation rules.

\begin{lemma}
  \label{lemma:subset_of_simplex_contains_k_simplices}
  Suppose $\mathcal{A} \subseteq P([n])$ is a subset such that for all $S$, $T \in \mathcal{A}$, it holds that $S \cap T = \emptyset$. Then $\S^{\mathcal{A}}$ contains each $k$-simplex of $\Delta^{n}$ for all $k < \abs{\mathcal{A}} - 1$.
\end{lemma}
\begin{proof}
  Let $X \subseteq [n]$. The simplex $\Delta^{X} \to \Delta^{n}$ factors through $\S^{\mathcal{A}}$ if and only if it factors through $\Delta^{[n] \smallsetminus T}$ for some (possibly not unique) $T \in \mathcal{A}$. This in turn is true if and only if $X$ and $T$ do not have any elements in common. For $\abs{X} < \abs{\mathcal{A}}$, there is always a set $T \in \mathcal{A}$ which does not have any elements in common with $X$.
\end{proof}

\begin{lemma}
  \label{lemma:add_a_simplex}
  Let $\mathcal{A} \subseteq P([n])$ and $T \subseteq [n]$. Then
  \begin{equation*}
    \S^{\mathcal{A}}_{[n]} \cup \Delta^{T} = \S^{\mathcal{A} \cup \{[n] \smallsetminus T\}}_{[n]}.
  \end{equation*}
\end{lemma}

The next lemma will be particularly useful in building inclusions $\S^{\mathcal{A}}_{[n]} \hookrightarrow \Delta^{n}$ simplex-by-simplex.

\begin{lemma}
  \label{lemma:bicartesian_square}
  Let $\mathcal{A} \subset P([n])$, and let $T \subset [n]$. Then the square
  \begin{equation*}
    \begin{tikzcd}
      \S^{\mathcal{A}|T}_{T}
      \arrow[r, hook]
      \arrow[d, hook]
      & \Delta^{T}
      \arrow[d, hook]
      \\
      \S^{\mathcal{A}}_{[n]}
      \arrow[r, hook]
      & \S^{\mathcal{A}}_{[n]} \cup \Delta^{T}
    \end{tikzcd}
  \end{equation*}
  is bicartesian, where $\mathcal{A}|T$ is the subset of $P(T)$ given by
  \begin{equation*}
    \mathcal{A}|T = \{S \cap T \mid S \in \mathcal{A}\}.
  \end{equation*}
\end{lemma}
\begin{proof}
  We have an equality
  \begin{equation*}
    \mathcal{S}_{T}^{\mathcal{A}|T} = \S^{\mathcal{A}}_{[n]} \cap \Delta^{T}.
  \end{equation*}
  as subsets of $\Delta^{n}$.
\end{proof}

\begin{notation}
  For any marked-scaled $n$-simplex $(\Delta^{n}, E, T)$ we will by minor abuse of notation reuse the same letters $E$ and $T$ to denote the restriction of the markings and scalings to any simplicial subset $S \subseteq \Delta^{n}$.
\end{notation}

In the remainder of this section we reproduce two lemmas from \cite{garcia2cartesianfibrationsii} which provide criteria for the inclusion $(\S^{\mathcal{A}}, E, T) \subseteq (\Delta^{n}, E, T)$ to be marked-scaled anodyne.

\begin{definition}
  Let $\mathcal{A} \subseteq P([n])$. We call $X \in P([n])$ an \defn{$\mathcal{A}$-basal set} if it contains precisely one element from each $S \in \mathcal{A}$. We denote the set of all $\mathcal{A}$-basal sets by $\Bas(\mathcal{A})$.
\end{definition}

\begin{definition}[\cite{garcia2020enhanced}, Definition 1.3]
  \label{def:inner_dull}
  We will call a subset $\mathcal{A} \subseteq P([n])$ \defn{inner dull} if it satisfies the following conditions.
  \begin{itemize}
    \item It does not include the empty set; $\emptyset \notin \mathcal{A}$.

    \item There exists $0 < i < n$ such that for all $S \in \mathcal{A}$, we have $i \notin S$.

    \item For every $S$, $T \in \mathcal{A}$, it follows that $S \cap T = \emptyset$.

    \item For each $\mathcal{A}$-basal set $X \in P([n])$, there exist $u$, $v \in X$ such that $u < i < v$.
  \end{itemize}

  The element $i \in [n]$ is known as the \emph{pivot point.}
\end{definition}

\begin{note}
  The last condition of \hyperref[def:inner_dull]{Definition~\ref*{def:inner_dull}} is always satisfied if $\mathcal{A}$ contains two singletons $\{u\}$ and $\{v\}$ such that $u < i < v$.
\end{note}

\begin{definition}
  Let $\mathcal{A} \subseteq P([n])$ be an inner dull subset with pivot point $i$, and let $X \in \mathcal{A}$. We will denote the adjacent elements of $X$ surrounding $i$ by $\ell^{X} < i < u^{X}$.
\end{definition}

\begin{lemma}[The Pivot Trick: \cite{garcia2cartesianfibrationsii}, Lemma 2.3.5]
  \label{lemma:pivot_trick}
  Let $\mathcal{A} \subseteq P([n])$ be an inner dull subset with pivot point $i$, and let $(\Delta^{n}, E, T)$ be a marked-scaled simplex. Further suppose that the following conditions hold:
  \begin{enumerate}
    \item Every marked edge $e \in E$ which does not contain $i$ factors through $\S^{\mathcal{A}}$.

    \item For all scaled simplices $\sigma = \{a < b < c\}$ of $\Delta^{n}$ which do not factor through $\S^{\mathcal{A}}$, and which do not contain the pivot point $i$, we have $a < i < c$, and $\sigma \cup \{i\}$ is fully scaled.

    \item For all $X \in \Bas(\mathcal{A})$ and all $r$, $s \in [n]$ such that $\ell^{X} \leq r < i < s \leq u^{X}$, the triangle $\{r, i, s\}$ is scaled
  \end{enumerate}

  Then the inclusion $(\mathcal{S}^{\mathcal{A}}, E, T) \hookrightarrow (\Delta^{n}, E, T)$ is marked-scaled anodyne.
\end{lemma}

\begin{definition}
  We will call a subset $\mathcal{A} \subseteq P([n])$ \defn{right dull} if it satisfies the following conditions.
  \begin{itemize}
    \item It is nonempty; $\mathcal{A} \neq \emptyset$.

    \item It does not include the empty set; $\emptyset \notin \mathcal{A}$.

    \item For all $S \in \mathcal{A}$, we have $n \notin S$.

    \item For every $S$, $T \in \mathcal{A}$, it follows that $S \cap T = \emptyset$.
  \end{itemize}

  In this case, we will refer to $n$ as the pivot point.
\end{definition}

\begin{lemma}[Right-anodyne pivot trick]
  \label{lemma:right-anodyne_pivot_trick}
  Let $\mathcal{A} \subset P([n])$ be a right dull subset (whose pivot point is by definition $n$), and let $(\Delta^{n}, E, T)$ be a marked-scaled simplex. Further suppose that the following conditions hold:
  \begin{itemize}
    \item Every marked edge $e \in E$ which does not contain $n$ factors through $\S^{\mathcal{A}}$.

    \item Let $\sigma = \{a < b < c\}$ be a scaled simplex not containing $n$. Then either $\sigma$ factors through $\S^{\mathcal{A}}$, or $\sigma \cup \{n\}$ is fully scaled, and $c \to n$ is marked.

    \item For all Z in  $\Bas(\category{A})$, For all $r$, $s \in [n]$ with $r \leq \min(Z) \leq \max(Z) \leq s < n$, the triangle $\Delta^{\{r, s, n\}}$ is scaled, and the simplex $\Delta^{\{s, n\}}$ is marked.
  \end{itemize}

  Then the inclusion $(\mathcal{S}^{\mathcal{A}}, E, T) \hookrightarrow (\Delta^{n}, E, T)$ is marked-scaled anodyne.
\end{lemma}

\subsection{The infinity-category of local systems}
\label{ssc:the_infinity_category_of_local_systems}

In \hyperref[ssc:the_twisted_arrow_category]{Subsection~\ref*{ssc:the_twisted_arrow_category}}, we defined the twisted arrow category $\Tw(\CC)$ of an $\infty$-bicategory $\CC$, and noted that the natural cartesian fibration $\Tw(\CC) \to \category{C} \times \category{C}\op$ classifies the mapping functor $\Map_{\CC}(-, -)$. Thus, the objects $\Tw(\CC)$ are simply the morphisms in $\CC$.

We can use this to define, for any cocomplete $\infty$-category $\category{C}$, the $\infty$-category of $\category{C}$-local systems; we simply consider $\Tw(\ICCat)$, whose objects are all functors between $\infty$-categories, and restrict to those functors whose domain is an $\infty$-groupoid, and whose codomain is $\category{C}$.

\begin{definition}
  For any cocomplete $\infty$-category $\category{C}$, we define the $\infty$-category of $\category{C}$-local systems $\LS(\category{C})$ together with a map of simplicial sets $p\colon \LS(\category{C}) \to \S$ by the following pullback diagram.
  \begin{equation}
    \label{eq:pullback_square_defining_local_systems}
    \begin{tikzcd}
      \LS(\category{C})
      \arrow[r, hook]
      \arrow[d, swap, "p"]
      & \Tw(\ICCat)
      \arrow[d, "p'"]
      \\
      \S \times \{\category{C}\}
      \arrow[r, hook]
      & \ICat \times \ICat\op
    \end{tikzcd}
  \end{equation}
\end{definition}

\begin{note}
  It is natural to wonder if we are making life unnecessarily difficult by not simply considering the category $\S \times_{\ICCat}(\ICCat)_{/\category{C}}$, which is after all also an $\infty$-category of functors from spaces into $\category{C}$. The reason for the more complicated construction given here will become apparent in \hyperref[ssc:the_monoidal_twisted_arrow_category]{Subsection~\ref*{ssc:the_monoidal_twisted_arrow_category}}, when we define the monoidal structure on $\LS(\category{C})$.
\end{note}

It is shown in \cite{garcia2020enhanced} that $p'$, hence also $p$, is a cartesian fibration. The cartesian fibration $p$ classifies the functor $\S\op \to \ICat$ sending
\begin{equation*}
  f\colon X \to Y \qquad \longmapsto \qquad f^{*}\colon \Fun(Y, \category{C}) \to \Fun(X, \category{C}).
\end{equation*}
Under the assumption that $\category{C}$ is cocomplete, each pullback map $f^{*}$ has a left adjoint $f_{!}$, given by left Kan extension. It follows on abstract grounds that $p$ is also a cocartesian fibration, whose cocartesian edges represent left Kan extension. In \hyperref[ssc:the_fibration]{Subsection~\ref*{ssc:the_fibration}}, we show this explicitly. Our goal in this section is to do some legwork to facilitate the proof of this result.

In investigating the map $p\colon \LS(\category{C}) \to \S$, it will be useful to factor the pullback square in \hyperref[eq:pullback_square_defining_local_systems]{Equation~\ref*{eq:pullback_square_defining_local_systems}} into the two pullback squares
\begin{equation*}
  \begin{tikzcd}
    \LS(\category{C})
    \arrow[r, hook]
    \arrow[d, swap, "p"]
    & \mathcal{R}
    \arrow[r, hook]
    \arrow[d, "p''"]
    & \Tw(\ICCat)
    \arrow[d, "p'"]
    \\
    \S \times \{\category{C}\}
    \arrow[r, hook]
    & \ICat \times [\category{C}]
    \arrow[r, hook]
    & \ICat \times \ICat\op
  \end{tikzcd},
\end{equation*}
where $[\category{C}]$ denotes the path component of $\category{C}$ in $( \ICat\op )^{\simeq}$. In order to show that $p$ is a cocartesian fibration, it will help us to understand the map $p''$. The $n$-simplices of $\category{R}$ are given by maps
\begin{equation*}
  Q(\Delta^{n}) = (\Delta^{n} \star (\Delta^{n})\op)_{\dagger} \to \ICCat
\end{equation*}
such that
\begin{itemize}
  \item each object in $(\Delta^{n})\op \subseteq Q(\Delta^{n})$ is sent to a quasicategory which is equivalent to $\category{C}$ (and in particular cocomplete), and

  \item each morphism in $(\Delta^{n})\op \subseteq Q(\Delta^{n})$ is mapped to an equivalence in $\ICCat$.
\end{itemize}
We can more usefully encode the second condition by endowing $\ICCat$ and $Q$ with a marking.

\begin{definition}
  We define the following markings.
  \begin{itemize}
    \item We denote by $\ICCat^{\natural}$ the marked-scaled simplicial set whose underlying scaled simplicial set is $\ICCat$, and where all equivalences have been marked.

    \item We denote by $\heartsuit$ the marking on $\Delta^{n} \star (\Delta^{n})\op$ consisting of all morphisms belonging to $(\Delta^{n})\op$, and by $(\Delta^{n} \star (\Delta^{n})\op)^{\heartsuit}_{\dagger}$ the corresponding marked-scaled simplicial set.
  \end{itemize}
\end{definition}

\begin{definition}
  \label{def:cosimplicial_obj_R}
  We define a cosimplicial object
  \begin{equation*}
    \tilde{R}\colon \Delta \to \SSetms;\qquad [n] \mapsto (\Delta^{n} \star (\Delta^{n})\op)^{\heartsuit}_{\dagger}.
  \end{equation*}

  We denote the extension of $\tilde{R}$ by colimits by
  \begin{equation*}
    R\colon \SSet \to \SSetms;\qquad X \mapsto \colim_{\Delta^{n} \to X} R(n).
  \end{equation*}
\end{definition}

The marked-scaled simplicial sets $R(\Delta^{n})$ are rather complicated, and showing that $p''$ is a cartesian fibration by solving the necessary lifting problems explicitly would be impractical. We will instead replace $R(\Delta^{n})$ by something simpler, considering the simplicial subsets coming from the inclusions
\begin{equation}
  \label{eq:inclusion_j_into_r}
  \Delta^{n} \star \Delta^{\{\overline{0}\}} \subseteq \Delta^{n} \star (\Delta^{n})\op,
\end{equation}
which we understand to inherit the marking and scaling.

\begin{definition}
  \label{def:cosimplicial_obj_J}
  We will denote by $\tilde{J}\colon \Delta \to \SSetms$ the cosimplicial object
  \begin{equation*}
    \tilde{J}\colon \Delta \to \SSetms;\qquad [n] \mapsto (\Delta^{n} \star \Delta^{\{\overline{0}\}})^{\heartsuit}_{\dagger},
  \end{equation*}
  and by $J$ the extension by colimits
  \begin{equation*}
    J\colon \SSet \to \SSetms;\qquad X \mapsto \colim_{\Delta^{n} \to X} \tilde{J}(n).
  \end{equation*}
\end{definition}

The inclusions of \hyperref[eq:inclusion_j_into_r]{Equation~\ref*{eq:inclusion_j_into_r}} induce for each $n \geq 0$ an inclusion of marked-scaled simplicial sets
\begin{equation*}
  v_{n}\colon \tilde{J}(n) \to \tilde{R}(n).
\end{equation*}

\begin{lemma}
  \label{lemma:lower_morphism_equivalence}
  For all $n \geq 0$, the map $v_{n}$ is marked-scaled anodyne, hence a weak equivalence in the model structure on marked-scaled simplicial sets.
\end{lemma}

We postpone the proof of \hyperref[lemma:lower_morphism_equivalence]{Lemma~\ref*{lemma:lower_morphism_equivalence}} until the end of this section.

\begin{note}
  \label{note:not_natural}
  Some care is warranted: the $v_{n}$ are not the components of a natural transformation $\tilde{J} \Rightarrow \tilde{R}$! The necessary squares simply do not commute for morphisms $\phi\colon [m] \to [n]$ in $\Delta$ such that $\phi(0) \neq 0$. This means that we do not, for a general simplicial set $X$, get a canonical weak equivalence of marked-scaled simplicial sets $J(X) \to R(X)$.
\end{note}

Despite the warning given in \hyperref[note:not_natural]{Note~\ref*{note:not_natural}}, it is still possible to get maps $J(X) \to R(X)$ in some cases. In the remainder of this section, we check that we can produce a marked-scaled weak equivalence $J(\Lambda^{n}_{0}) \to R(\Lambda^{n}_{0})$.

\begin{notation}
  Denote by $\mathring{\Delta}$ the subcategory of $\Delta$ on morphisms $[m] \to [n]$ which send $0 \mapsto 0$. Denote by $I$ the inclusion $\mathring{\Delta} \hookrightarrow \Delta$.
\end{notation}

It is easy to check the following.
\begin{lemma}
  \label{lemma:restricted_natural_transformation}
  The morphisms $v_{n}$ form a natural transformation $v\colon \tilde{J} \circ I \Rightarrow \tilde{R} \circ I$.
\end{lemma}

\begin{lemma}
  \label{lemma:marked_scaled_equivalence_left_horn}
  For all $n \geq 1$, there is a marked-scaled equivalence $J(\Lambda^{n}_{0}) \to R(\Lambda^{n}_{0})$.
\end{lemma}
\begin{proof}
  We can write $J(\Lambda^{n}_{0})$ as a colimit of the composition $\tilde{J} \circ b$ coming from the bottom of the diagram
  \begin{equation*}
    \begin{tikzcd}
      P_{n}
      \arrow[d, hook]
      \arrow[r, "a"]
      & \mathring{\Delta}
      \arrow[d, swap, "I"]
      \arrow[dr, "\tilde{J} \circ I"]
      \\
      (\Delta \downarrow \Lambda^{n}_{0})^{\mathrm{nd}}
      \arrow[r, "b"]
      & \Delta
      \arrow[r, "\tilde{J}"]
      &\SSetms
    \end{tikzcd},
  \end{equation*}
  where $(\Delta \downarrow \Lambda^{n}_{0})^{\mathrm{nd}}$ is the category of nondegenerate simplices of $\Lambda^{n}_{0}$. Denote by $P_{n}$ the poset of proper subsets of $[n]$ such that $0 \in S$. There is an obvious inclusion $P_{n} \hookrightarrow (\Delta \downarrow \Lambda^{n}_{0})^{\mathrm{nd}}$, and one readily checks using Quillen's Theorem A that this inclusion is cofinal. Further note that the functor $P_{n} \to (\Delta \downarrow \Lambda^{n}_{0}) \to \Delta$ factors through $\mathring{\Delta}$ via a map $a\colon P_{n} \to \mathring{\Delta}$. Thus, we can equally express $J(\Lambda^{n}_{0})$ as the colimit of the functor $\tilde{J} \circ I \circ a$. Precisely the same reasoning tells us that we can express $R(\Lambda^{n}_{0})$ as the colimit of $\tilde{R} \circ I \circ a$. The result now follows from \hyperref[lemma:restricted_natural_transformation]{Lemma~\ref*{lemma:restricted_natural_transformation}}, \hyperref[lemma:lower_morphism_equivalence]{Lemma~\ref*{lemma:lower_morphism_equivalence}}, and the fact that each strict colimit is the model for the homotopy colimit.
\end{proof}

\begin{proof}[Proof of Lemma \ref{lemma:lower_morphism_equivalence}]
  In this proof, all simplicial subsets of $\Delta^{2n+1}$ will be assumed to carry the marking $\heartsuit$ and the scaling $\dagger$.

  We can write each $v_{n}$ as a composition
  \begin{equation*}
    \Delta^{\{0, \ldots, n, \overline{0}\}} \overset{v''_{n}}{\hookrightarrow} \Delta^{\{0, \ldots, n, \overline{0}\}} \cup \Delta^{\{\overline{n}, \ldots, \overline{0}\}} \overset{v'_{n}}{\hookrightarrow} \Delta^{2n+1}.
  \end{equation*}
  Here, the map $v_{n}''$ is a pushout along the inclusion $(\Delta^{\{n\}})^{\sharp}_{\sharp} \hookrightarrow (\Delta^{n})^{\sharp}_{\sharp}$, which is marked-scaled anodyne by \hyperref[prop:sharp_marked_right_anodyne]{Proposition~\ref*{prop:sharp_marked_right_anodyne}}. It remains to show that each of the maps $v_{n}'$ is marked-scaled anodyne.

  To this end, we introduce some notation. Let $M^{n}_{0} = \Delta^{\{0, \ldots, n, \overline{0}\}} \cup \Delta^{\{\overline{n}, \ldots \overline{0}\}} \subset \Delta^{2n+1}$, and for $1 \leq k \leq n$, define
  \begin{equation*}
    M^{n}_{k} := M^{n}_{0} \cup \left(\bigcup_{\ell = 1}^{k} \Delta^{[2n+1] \smallsetminus \{\ell, \overline{\ell}\}}\right).
  \end{equation*}
  There is an obvious filtration
  \begin{equation}
    \label{eq:filtration_by_adding_sides}
    M^{n}_{0} \overset{i^{n}_{0}}{\hookrightarrow} M^{n}_{1} \overset{i^{n}_{1}}{\hookrightarrow} \cdots \overset{i^{n}_{n-1}}{\hookrightarrow} M^{n}_{n} \overset{i^{n}_{n}}{\hookrightarrow} \Delta^{2n+1}.
  \end{equation}

  Define
  \begin{equation*}
    j^{n}_{k} := i^{n}_{n} \circ \cdots \circ i^{n}_{k}\colon M^{n}_{k} \hookrightarrow \Delta^{2n+1},\qquad 0 \leq k \leq n.
  \end{equation*}
  In particular, note that $j^{n}_{0} = v_{n}'$.

  This allows us to replace our goal by something superficially more difficult: we would like to show that, for each $n \geq 0$ and each $0 \leq k \leq n$, the map $i^{n}_{k}$ is marked-scaled anodyne. We proceed by induction. We take as our base case $n=0$, where we have the trivial filtration
  \begin{equation*}
    M^{0}_{0} \overset{i^{0}_{0}}{=} \Delta^{1}.
  \end{equation*}
  This is an equality of subsets of $\Delta^{1}$, hence certainly an equivalence.

  We now suppose that the result holds true for $n-1$; that is, that the maps $i^{n-1}_{k}$ are marked-scaled anodyne for all $0 \leq k \leq n-1$. We aim to show that each $i^{n}_{k}$ is marked-scaled anodyne for each $0 \leq k \leq n$.

  For $0 \leq k < n$, we can write $i^{n}_{k}$ as the inclusion
  \begin{equation*}
    \S^{\mathcal{A}}_{[2n+1]} \hookrightarrow \S^{\mathcal{A}}_{[2n+1]} \cup \Delta^{[2n+1] \smallsetminus \{k, \overline{k}\}},\qquad \mathcal{A}
    = \left\{ \substack{ \{\overline{n}, \ldots, \overline{1}\} \\ \{0, \ldots n\} \\ \{1, \overline{1}\} \\ \vdots \\ \{k-1, \overline{k-1}\} } \right\}.
  \end{equation*}
  By \hyperref[lemma:bicartesian_square]{Lemma~\ref*{lemma:bicartesian_square}}, we have a pushout square
  \begin{equation*}
    \begin{tikzcd}
      \S^{\mathcal{A}|([2n+1] \smallsetminus \{k, \overline{k}\})}_{[2n+1] \smallsetminus \{k, \overline{k}\}}
      \arrow[r, hook]
      \arrow[d, hook]
      & \Delta^{[2n+1] \smallsetminus \{k, \overline{k}\}}
      \arrow[d, hook]
      \\
      \S^{\mathcal{A}}_{[2n+1]}
      \arrow[r, hook]
      & \S^{\mathcal{A}}_{[2n+1]} \cup \Delta^{[2n+1] \smallsetminus \{k, \overline{k}\}}.
    \end{tikzcd}.
  \end{equation*}
  Therefore, it suffices to show that the top morphism is marked-scaled anodyne. One checks that this map is of the form $j^{n-1}_{k}$, and is thus marked-scaled anodyne by the inductive hypothesis. Therefore, it remains only to show that $i^{n}_{n}$ is marked-scaled anodyne.

  The case $n = 0$ is an isomorphism, so there is nothing to show. We treat the case $n = 1$ separately. In this case, $i^{1}_{1}$ takes the form
  \begin{equation*}
    \Delta^{\{0,1,\overline{0}\}} \cup \Delta^{\{\overline{1},\overline{0}\}} \overset{v_{1}'}{\hookrightarrow} \Delta^{3},
  \end{equation*}
  which we construct in the following way.
  \begin{enumerate}
    \item First we fill the simplex $\Delta^{\{1,\overline{1},\overline{0}\}}$ (together with its marking and scaling) as a pushout along a morphism of type \ref{item:outerms}.

    \item Then we fill the simplex $\Delta^{\{0,1,\overline{1}\}}$ (together with its marking and scaling) as a pushout along a morphism of type \ref{item:innerms}.

    \item Finally, we fill the full simplex $\Delta^{3}$ (together with its marking and scaling) as a pushout along a morphism of type \ref{item:innerms}.
  \end{enumerate}

  Now we may assume that $n \geq 2$. In this case, we can write $i^{n}_{n}\colon M^{n}_{n} \hookrightarrow \Delta^{2n+1}$ as an inclusion
  \begin{equation*}
    \S^{\mathcal{A}}_{[2n+1]} \subseteq \Delta^{2n+1},\qquad \mathcal{A} = \left\{ \substack{ \{\overline{n}, \ldots, \overline{1}\} \\ \{0, \ldots n\} \\ \{1, \overline{1}\} \\ \vdots \\ \{n, \overline{n}\} } \right\}.
  \end{equation*}

  We will express this inclusion as the following composition of fillings:
  \begin{enumerate}
    \item We first add the simplices $\Delta^{\{n, \overline{n}, \ldots, \overline{0}\}}$, $\Delta^{\{n-1, n, \overline{n}, \ldots, \overline{0}\}}$, \dots, $\Delta^{\{2, \ldots, n, \overline{n}, \ldots, \overline{0}\}}$.

    \item We next add $\Delta^{\{0, \ldots, n, \overline{n}, \overline{0}\}}$, $\Delta^{\{0, \ldots, n, \overline{n}, \overline{n-1}, \overline{0}\}}$, \dots, $\Delta^{\{0, \ldots, n, \overline{n}, \ldots, \overline{2}, \overline{0}\}}$

    \item We next add $\Delta^{\{1, \ldots n, \overline{n}, \ldots, \overline{0}\}}$.

    \item We finally add $\Delta^{\{0, \ldots n, \overline{n}, \ldots, \overline{0}\}}$.
  \end{enumerate}
  We proceed.
  \begin{enumerate}
    \item
      \begin{itemize}
        \item Using \hyperref[lemma:bicartesian_square]{Lemma~\ref*{lemma:bicartesian_square}}, we see that the square
          \begin{equation*}
            \begin{tikzcd}
              \S^{\mathcal{A}|\{n, \overline{n}, \ldots, \overline{0}\}}_{\{n, \overline{n}, \ldots, \overline{0}\}}
              \arrow[r, hook]
              \arrow[d, hook]
              & \Delta^{\{n, \overline{n}, \ldots, \overline{0}\}}
              \arrow[d, hook]
              \\
              \S^{\mathcal{A}}_{[2n+1]}
              \arrow[r, hook]
              & \S^{\mathcal{A}}_{[2n+1]} \cup \Delta^{\{n, \overline{n}, \ldots, \overline{0}\}}
            \end{tikzcd}
          \end{equation*}
          is pushout. In order to show that the bottom inclusion is marked-scaled anodyne, it thus suffices to show that the top inclusion is marked-scaled anodyne. We have
          \begin{equation*}
            \begin{tikzcd}
              \mathcal{A}|\{n, \overline{n}, \ldots, \overline{0}\}
              = \left\{ \substack{ \{\overline{n}, \ldots, \overline{1}\} \\ \{n\} \\ \{\overline{1}\} \\ \vdots \\ \{\overline{n-1}\} \\ \{n, \overline{n}\} } \right\}
              \sim \left\{ \substack{ \{n\} \\ \{\overline{n-1}\} \\ \vdots \\ \{\overline{1}\} } \right\} =: \mathcal{A}'
            \end{tikzcd}.
          \end{equation*}
          This is a dull subset of $\{n, \overline{n}, \ldots, \overline{0}\}$ with pivot $\overline{n}$. The only $\mathcal{A}'$-basal set is $\{n, \overline{n-1}, \ldots, \overline{1}\}$. One checks that $\mathcal{A}'$, together with the marking $\heartsuit$ and scaling $\dagger$, satisfies the conditions of the \hyperref[lemma:pivot_trick]{Pivot~Trick}:
          \begin{itemize}
            \item For $n = 2$, the only the marked edge not containing the pivot $\overline{2}$ is $\overline{1} \to \overline{0}$, which belongs to $\S^{\mathcal{A}'}_{\{n, \overline{n}, \ldots, \overline{0}\}}$. The only scaled simplex which does not contain $\overline{2}$, and which does not factor through $\S^{\mathcal{A}'}$, is $\sigma = \{2 < \overline{1} < \overline{0}\}$. The simplex $\sigma \cup \{\overline{2}\}$ is fully scaled.

            \item For $n = 3$, each marked edge is contained in $\S^{\mathcal{A}'}$. The only scaled triangle which does not contain the pivot $\overline{3}$, and which does not factor through $\S^{\mathcal{A}'}$, is $\sigma = \{3 < \overline{2} < \overline{1}\}$. The simplex $\sigma \cup \{\overline{3}\}$ is fully scaled.

            \item For $n \geq 4$, all scaled and marked simplices belong to $\S^{\mathcal{A}'}$ by \hyperref[lemma:subset_of_simplex_contains_k_simplices]{Lemma~\ref*{lemma:subset_of_simplex_contains_k_simplices}}.
          \end{itemize}
          In each case, the simplex $\{n, \overline{n}, \overline{n-1}\}$ is scaled, so the top inclusion is marked-scaled anodyne by the \hyperref[lemma:pivot_trick]{Pivot Trick}. We can write
          \begin{equation*}
            \S^{\mathcal{A}}_{[2n+1]} \cup \Delta^{\{n, \overline{n}, \ldots, \overline{0}\}} = \S^{\mathcal{A} \cup ([2n+1] \smallsetminus \{n, \overline{n}, \ldots, \overline{0}\})}_{[2n+1]},
          \end{equation*}
          We see that
          \begin{equation*}
            \mathcal{A} \cup ([2n+1] \smallsetminus \{n, \overline{n}, \ldots, \overline{0}\}) \sim \left\{ \substack{ \{\overline{n}, \ldots, \overline{1}\} \\ \{0, \ldots n-1\} \\ \{1, \overline{1}\} \\ \vdots \\ \{n, \overline{n}\} } \right\},
          \end{equation*}
          which we denote by $\mathcal{A}_{n-1}$.

        \item We proceed inductively. Suppose we have added the simplices $\Delta^{\{n, \overline{n}, \ldots, \overline{0}\}}$, $\Delta^{\{n-1, n, \overline{n}, \ldots, \overline{0}\}}$, \dots, $\Delta^{\{k+1, \ldots, n, \overline{n}, \ldots, \overline{0}\}}$, for $2 \leq k \leq n-1$. Using \hyperref[lemma:add_a_simplex]{Lemma~\ref*{lemma:add_a_simplex}} and \hyperref[lemma:replace_poset_by_minimal_elements]{Lemma~\ref*{lemma:replace_poset_by_minimal_elements}} can write the result of these additions as
          \begin{equation*}
            \S^{\mathcal{A}}_{[2n+1]} \cup \left( \bigcup_{i = k+1}^{n} \Delta^{\{i, \ldots, n, \overline{n}, \ldots, \overline{0}\}}\right) = \S^{\mathcal{A}_{k+1}}_{[2n+1]},
          \end{equation*}
          where
          \begin{equation*}
            \mathcal{A}_{k+1} = \left\{ \substack{ \{\overline{n}, \ldots, \overline{1}\} \\ \{0, \ldots, k\} \\ \{1, \overline{1}\} \\ \vdots \\ \{n, \overline{n}\} } \right\}.
          \end{equation*}

          Using \hyperref[lemma:bicartesian_square]{Lemma~\ref*{lemma:bicartesian_square}}, we see that the square
          \begin{equation*}
            \begin{tikzcd}
              \S^{\mathcal{A}_{k+1}|\{k, \ldots, n, \overline{n}, \ldots, \overline{0}\}}_{\{k, \ldots, n, \overline{n}, \ldots, \overline{0}\}}
              \arrow[r, hook]
              \arrow[d, hook]
              & \Delta^{\{k, \ldots, n, \overline{n}, \ldots, \overline{0}\}}
              \arrow[d, hook]
              \\
              \S^{\mathcal{A}_{k+1}}_{[2n+1]}
              \arrow[r, hook]
              & \S^{\mathcal{A}_{k+1}}_{[2n+1]} \cup \Delta^{\{k, \ldots, n, \overline{n}, \ldots, \overline{0}\}}
            \end{tikzcd}
          \end{equation*}
          is pushout, so in order to show that the bottom morphism is marked-scaled anodyne, it suffices to show that the top morphism is. We see that
          \begin{equation*}
            \mathcal{A}_{k+1}|\{k, \ldots, n, \overline{n}, \ldots, \overline{0}\} =
            \left\{ \substack{ \{\overline{n}, \ldots, \overline{1}\} \\ \{k\} \\ \{\overline{1}\} \\ \vdots \\ \{\overline{k-1}\} \\ \{k, \overline{k}\} \\ \vdots \\ \{n, \overline{n}\} } \right\}
            \sim \left\{ \substack{ \{k\} \\ \{\overline{1}\} \\ \vdots \\ \{\overline{k-1}\} \\ \{k+1, \overline{k+1}\} \\ \vdots \\ \{n, \overline{n}\} } \right\} =: \mathcal{A}_{k+1}'.
          \end{equation*}

          This is a dull subset of $P(\{k, \ldots, n, \overline{n}, \ldots, \overline{0}\})$ with pivot $\overline{k}$, and one checks that the conditions of the \hyperref[lemma:pivot_trick]{Pivot Trick} are satisfied:
          \begin{itemize}
            \item For $n = 3$, where the only value of $k$ is $k = 2$, each marked edge is contained in $\S^{\mathcal{A}_{k+1}'}$ by \hyperref[lemma:subset_of_simplex_contains_k_simplices]{Lemma~\ref*{lemma:subset_of_simplex_contains_k_simplices}}, and one can check that each scaled triangle factors through $\S^{\mathcal{A}_{k+1}'}$.

            \item For $n \geq 4$, all scaled and marked simplices belong to $\S^{\mathcal{A}'}$ by \hyperref[lemma:subset_of_simplex_contains_k_simplices]{Lemma~\ref*{lemma:subset_of_simplex_contains_k_simplices}}.
          \end{itemize}
          The basal sets are of the form
          \begin{equation*}
            \{k, a_{1}, \ldots, a_{n-k}, \overline{k-1}, \ldots, \overline{1}\},
          \end{equation*}
          where each $a_{1}$, \dots, $a_{n-k}$ is of the form $\ell$ or $\overline{\ell}$ for $k+1 \leq \ell \leq n$. In each case, the simplex $\{a_{n-k}, \overline{k}, \overline{k-1}\}$ is scaled. Thus, the conditions of the \hyperref[lemma:pivot_trick]{Pivot Trick}, the top morphism is marked-scaled anodyne.
      \end{itemize}

      We have now added the simplices promised in part 1., and are left with the simplicial subset $\S^{\mathcal{A}_{2}}_{[2n+1]}$, where
      \begin{equation*}
        \mathcal{A}_{2}
        = \left\{ \substack{ \{\overline{n}, \ldots, \overline{1}\} \\ \{0, 1\} \\ \{1, \overline{1}\} \\ \vdots \\ \{n, \overline{n}\} } \right\}.
      \end{equation*}

    \item Each step in this sequence is solved exactly like those above. The calculations are omitted. The end result is the simplicial subset
      \begin{equation*}
        \mathcal{B}_{2} = 
        \left\{ \substack{ \{\overline{1}\} \\ \{0, 1\} \\ \{2, \overline{2}\} \\ \vdots \\ \{n, \overline{n}\} } \right\}.
      \end{equation*}

    \item Using \hyperref[lemma:bicartesian_square]{Lemma~\ref*{lemma:bicartesian_square}}, we see that the square
      \begin{equation*}
        \begin{tikzcd}
          \S^{\mathcal{B}_{2}|\{1, \ldots, n, \overline{n}, \ldots, \overline{0}\}}_{\{1, \ldots, n, \overline{n}, \ldots, \overline{0}\}}
          \arrow[r, hook]
          \arrow[d, hook]
          & \Delta^{\{1, \ldots, n, \overline{n}, \ldots, \overline{0}\}}
          \arrow[d, hook]
          \\
          \S^{\mathcal{B}_{2}}_{[2n+1]}
          \arrow[r, hook]
          & \S^{\mathcal{B}_{2}}_{[2n+1]} \cup \Delta^{\{1, \ldots, n, \overline{n}, \ldots, \overline{0}\}}
        \end{tikzcd}
      \end{equation*}
      is pushout. We thus have to show that the top morphism is marked-scaled anodyne. We have
      \begin{equation*}
        \mathcal{B}_{2}|\{1, \ldots, n, \overline{n}, \ldots, \overline{0}\} \sim
        \left\{ \substack{ \{\overline{1}\} \\ \{2, \overline{2}\} \\ \vdots \\ \{n, \overline{n}\} \\ \{\overline{1}\} } \right\}.
      \end{equation*}
      One readily sees that this is a right-dull subset, and that the conditions of the \hyperref[lemma:right-anodyne_pivot_trick]{Right-Anodyne Pivot Trick} are satisfied.

    \item One solves this as before, checking that the conditions of the \hyperref[lemma:pivot_trick]{Pivot Trick} are satisfied, with pivot point $2$.
  \end{enumerate}
\end{proof}

\subsection{The map governing local systems is a cocartesian fibration}
\label{ssc:the_fibration}

Our aim is to show that the map of quasicategories $p\colon \LS(\category{C}) \to \S$ defined by the diagram
\begin{equation*}
  \begin{tikzcd}
    \LS(\category{C})
    \arrow[r, hook]
    \arrow[d, swap, "p"]
    & \mathcal{R}
    \arrow[r, hook]
    \arrow[d, "p''"]
    & \Tw(\ICCat)
    \arrow[d, "p'"]
    \\
    \S \times \{\category{C}\}
    \arrow[r, hook]
    & \ICat \times [\category{C}]
    \arrow[r, hook]
    & \ICat \times \ICat\op
  \end{tikzcd},
\end{equation*}
in which both squares are pullback, is a cocartesian fibration. We now define the class of morphisms in $\LS(\category{C})$ which we claim are $p$-cocartesian.

\begin{definition}
  \label{def:left_kan_simplex}
  A morphism $\tilde{\sigma}\colon \Delta^{1} \to \LS(\category{C})$ is said to be \defn{left Kan} if the simplex $\sigma\colon \Delta^{3}_{\dagger} \to \ICCat$ to which it is adjoint has the property that the restriction $\sigma|\Delta^{\{0,1,\overline{0}\}}$ is left Kan in the sense of \hyperref[def:left_kan]{Definition~\ref*{def:left_kan}}.
\end{definition}

We draw the `front' and `back' of a general $3$-simplex $\sigma\colon \Delta^{3}_{\dagger} \to \ICCat$ corresponding to some morphism $\Delta^{1} \to \LS(\category{C})$.
\begin{equation*}
  \begin{tikzcd}[column sep=large, row sep=large]
    X
    \arrow[r, "f"]
    \arrow[d, "\mathcal{F}"{swap}, ""{name=L}]
    \arrow[dr, "\mathcal{H}"]
    & Y
    \arrow[d, "\mathcal{G}"]
    \\
    \category{C}
    & \category{C}
    \arrow[l, "\id_{\category{C}}"]
    \arrow[from=L, Rightarrow, shorten=3ex, "\zeta"{swap}]
  \end{tikzcd}
  \qquad\text{and}\qquad
  \begin{tikzcd}[column sep=large, row sep=large]
    X
    \arrow[r, "f"]
    \arrow[d, "\mathcal{F}"{swap}, ""{name=L}]
    & Y
    \arrow[d, "\mathcal{G}"]
    \arrow[dl, "\mathcal{G}'"]
    \arrow[from=L, Rightarrow, shorten=3ex, "\eta"]
    \\
    \category{C}
    & \category{C}
    \arrow[l, "\id_{\category{C}}"]
  \end{tikzcd}
\end{equation*}
Here, the $2$-simplices which are notated without natural transformations are constrained by the scaling $\dagger$ to be thin. The morphism $\tilde{\sigma}$ is left Kan if and only if $\mathcal{G}'$ is a left Kan extension of $\mathcal{F}$ along $f$, and $\eta$ is a unit map.

\begin{notation}
  We endow the quasicategory $\LS(\category{C})$ with the marking $\clubsuit \subseteq \LS(\category{C})_{1}$ consisting of all left Kan edges.
\end{notation}

\begin{theorem}
  \label{thm:cocartesian_edges_of_p}
  The map $p\colon \LS(\category{C}) \to \S$ is a cocartesian fibration, and an edge $\Delta^{1} \to \LS(\category{C})$ is $p$-cocartesian if and only if it is left Kan.
\end{theorem}
\begin{proof}
  By \cite[Prop.~3.1.1.6]{highertopostheory}, it suffices to show that the map $\LS(\category{C})^{\clubsuit} \to \S^{\sharp}$ has the right lifting property with respect to each of the classes of generating marked anodyne morphisms. We check these one-by-one.
  \begin{enumerate}
    \item We need to check that all lifting problems
      \begin{equation*}
        \begin{tikzcd}
          (\Lambda^{n}_{i})^{\flat}
          \arrow[r]
          \arrow[d]
          & \LS(\category{C})^{\clubsuit}
          \arrow[d]
          \\
          (\Delta^{n})^{\flat}
          \arrow[r]
          \arrow[ur, dashed]
          & \S^{\sharp}
        \end{tikzcd},\qquad n \geq 2, \quad 0 < i < n
      \end{equation*}
      admit solutions. This follows from \hyperref[thm:mainthm_walker_fernando]{Theorem~\ref*{thm:mainthm_walker_fernando}}.

    \item We need to show that each of the lifting problems
      \begin{equation*}
        \begin{tikzcd}
          (\Lambda^{n}_{0})^{\mathcal{L}}
          \arrow[r]
          \arrow[d]
          & \LS(\category{C})^{\clubsuit}
          \arrow[d]
          \\
          (\Delta^{n})^{\mathcal{L}}
          \arrow[r]
          \arrow[ur, dashed]
          & \S^{\sharp}
        \end{tikzcd},\qquad n \geq 1.
      \end{equation*}
      has a solution. Given any such lifting problem, it suffices to find a solution to the outer lifting problem
      \begin{equation*}
        \begin{tikzcd}
          (\Lambda^{n}_{0})^{\mathcal{L}}
          \arrow[r]
          \arrow[d]
          & \LS(\category{C})
          \arrow[d]
          \arrow[r]
          & \mathcal{R}
          \arrow[d]
          \\
          (\Delta^{n})^{\mathcal{L}}
          \arrow[r]
          \arrow[urr, dashed]
          & \S
          \arrow[r, hook]
          & \ICat \times [\category{C}]
        \end{tikzcd}.
      \end{equation*}
      Passing to the adjoint lifting problem, we need to find a solution to the lifting problem
      \begin{equation}
        \label{eq:unsimplified_lifting_problem}
        \begin{tikzcd}
          R(\Lambda^{n}_{0}) \displaystyle\coprod_{(\Lambda^{n}_{0})^{\flat}_{\sharp} \amalg ((\Lambda^{n}_{0})\op)^{\sharp}_{\sharp}}(\Delta^{n})^{\flat}_{\sharp} \amalg ((\Delta^{n})\op)^{\sharp}_{\sharp}
          \arrow[r, "\sigma"]
          \arrow[d, hook]
          & \ICCat
          \\
          R(\Delta^{n})
          \arrow[ur, dashed]
        \end{tikzcd}
      \end{equation}
      such that $\sigma|\{0,1,\overline{0}\}$ is left Kan. Here $R$ is the functor of \hyperref[def:cosimplicial_obj_R]{Definition~\ref*{def:cosimplicial_obj_R}}.

      We note the existence of a commutative square
      \begin{equation*}
        \begin{tikzcd}
          J(\Lambda^{n}_{0}) \displaystyle\coprod_{(\Lambda^{n}_{0})^{\flat}_{\sharp} \amalg (\Delta^{\{\overline{0}\}})^{\sharp}_{\sharp}}(\Delta^{n})^{\flat}_{\sharp} \amalg (\Delta^{\{\overline{0}\}})^{\sharp}_{\sharp}
          \arrow[r, hook, "b"]
          \arrow[d, hook]
          & R(\Lambda^{n}_{0}) \displaystyle\coprod_{(\Lambda^{n}_{0})^{\flat}_{\sharp} \amalg ((\Lambda^{n}_{0})\op)^{\sharp}_{\sharp}}(\Delta^{n})^{\flat}_{\sharp} \amalg ((\Delta^{n})\op)^{\sharp}_{\sharp}
          \arrow[d, hook]
          \\
          J(\Delta^{n})
          \arrow[r, hook, "v_{n}"]
          & R(\Delta^{n})
        \end{tikzcd},
      \end{equation*}
      where $J$ is the functor defined in \hyperref[def:cosimplicial_obj_J]{Definition~\ref*{def:cosimplicial_obj_J}}. Here, $v_{n}$ is the morphism of \hyperref[lemma:lower_morphism_equivalence]{Lemma~\ref*{lemma:lower_morphism_equivalence}}. The morphism $b$ is defined component-wise via the following morphisms.
      \begin{itemize}
        \item The morphism $J(\Lambda^{n}_{0}) \to R(\Lambda^{n}_{0})$ comes from \hyperref[lemma:marked_scaled_equivalence_left_horn]{Lemma~\ref*{lemma:marked_scaled_equivalence_left_horn}}, where it is proven to be a weak equivalence in the marked-scaled model structure.

        \item The map $(\Delta^{\{\overline{0}\}})^{\sharp}_{\sharp} \to ((\Lambda^{n}_{0})\op)^{\sharp}_{\sharp}$ is marked-scaled anodyne by \hyperref[prop:sharp_marked_right_anodyne]{Proposition~\ref*{prop:sharp_marked_right_anodyne}}.

        \item The maps $(\Delta^{\{\overline{0}\}})^{\sharp}_{\sharp} \to ((\Delta^{n})\op)^{\sharp}_{\sharp}$ is marked-scaled anodyne by \hyperref[prop:sharp_marked_right_anodyne]{Proposition~\ref*{prop:sharp_marked_right_anodyne}}.

        \item The rest of the morphisms connecting the components are isomorphisms.
      \end{itemize}

      We showed in \hyperref[lemma:lower_morphism_equivalence]{Lemma~\ref*{lemma:lower_morphism_equivalence}} that the lower morphism in this square is a marked-scaled equivalence. We would now like to show that the upper morphism is a marked-scaled equivalence. Recall that, since the cofibrations in the model structure on marked-scaled simplicial sets are simply those morphisms whose underlying morphism of simplicial sets is a monomorphism, the colimits defining $b$ are models for the homotopy colimits, so the result follows from the observation that each component is a weak equivalence in the marked-scaled model structure.

      Using \cite[Prop.~A.2.3.1]{highertopostheory}, we see that in order to solve the lifting problem of \hyperref[eq:unsimplified_lifting_problem]{Equation~\ref*{eq:unsimplified_lifting_problem}}, it suffices to show that the lifting problem
      \begin{equation*}
        \begin{tikzcd}
          J(\Lambda^{n}_{0}) \displaystyle\coprod_{(\Lambda^{n}_{0})^{\flat}_{\sharp} \amalg (\Delta^{\{\overline{0}\}})^{\sharp}_{\sharp}}(\Delta^{n})^{\flat}_{\sharp} \amalg (\Delta^{\{\overline{0}\}})^{\sharp}_{\sharp}
          \arrow[r, "b \circ \sigma"]
          \arrow[d, hook]
          & \ICCat
          \\
          J(\Delta^{n})
          \arrow[ur, dashed, swap, "\ell"]
        \end{tikzcd}
      \end{equation*}
      has a solution whose restriction $\ell|\{0,1,\overline{0}\}$ is left Kan. However, we note that there exists a pushout square
      \begin{equation*}
        \begin{tikzcd}
          (\Lambda^{\{0, \ldots, n, \overline{0}\}}_{0})^{\flat}_{\flat}
          \arrow[r]
          \arrow[d]
          & J(\Lambda^{n}_{0}) \displaystyle\coprod_{(\Lambda^{n}_{0})^{\flat}_{\sharp} \amalg (\Delta^{\{\overline{0}\}})^{\sharp}_{\sharp}}(\Delta^{n})^{\flat}_{\sharp} \amalg (\Delta^{\{\overline{0}\}})^{\sharp}_{\sharp}
          \arrow[d]
          \\
          (\Delta^{\{0, \ldots, n, \overline{0}\}})^{\flat}_{\flat}
          \arrow[r]
          & J(\Delta^{n})
        \end{tikzcd}
      \end{equation*}
      in which the rightward-facing morphisms are isomorphisms on underlying simplicial sets, and where the only new decorations being added are $(\Delta^{\{0, \ldots, n\}})^{\flat}_{\flat} \hookrightarrow (\Delta^{\{0, \ldots, n\}})^{\flat}_{\sharp}$. Therefore, it suffices to solve the lifting problems
      \begin{equation*}
        \begin{tikzcd}
          (\Lambda^{\{0, \ldots, n, \overline{0}\}}_{0})^{\flat}_{\flat}
          \arrow[r]
          \arrow[d]
          & \ICCat
          \\
          (\Delta^{\{0, \ldots, n, \overline{0}\}})^{\flat}_{\flat}
          \arrow[ur, dashed, swap, "\ell'"]
        \end{tikzcd},\qquad n \geq 1
      \end{equation*}
      such that $\ell'|\{0,1,\overline{0}\}$ is left Kan. That these lifting problems admit solutions for $n \geq 2$ is the content of \hyperref[thm:left_kan_implies_globally_left_kan]{Theorem~\ref*{thm:left_kan_implies_globally_left_kan}}. The case $n=1$ is the statement that left Kan extensions of functors into cocomplete categories exist along functors between small categories.

    \item We need to show that every lifting problem of the form
      \begin{equation*}
        \begin{tikzcd}
          (\Lambda^{2}_{1})^{\sharp} \amalg_{( \Lambda^{2}_{1} )^{\flat}}(\Delta^{2})^{\sharp}
          \arrow[r]
          \arrow[d]
          & \LS(\category{C})^{\clubsuit}
          \arrow[d]
          \\
          (\Delta^{2})^{\sharp}
          \arrow[r]
          \arrow[ur, dashed]
          & \S^{\sharp}
        \end{tikzcd}
      \end{equation*}
      has a solution. Considering the adjoint lifting problem, we find that it suffices to show that for any $\sigma\colon  \Delta^{5}_{\dagger} \to \ICCat$ such that the restrictions $\sigma|\{0,1,\overline{0}\}$ and $\sigma|\{1,2,\overline{1}\}$ are left Kan and the morphisms belonging to $\sigma|\{\overline{2}, \overline{1}, \overline{0}\}$ are equivalences, the restriction $\sigma|\{0,2,\overline{0}\}$ is left Kan. Applying \hyperref[lemma:transport_left_kan_simplices]{Lemma~\ref*{lemma:transport_left_kan_simplices}} to $\sigma|\{1,2,\overline{1},\overline{0}\}$, we see that $\sigma|\{1,2,\overline{1}\}$ is left Kan if and only if $\sigma|\{1,2,\overline{0}\}$ is left Kan. Applying \hyperref[lemma:compose_left_kan_simplices]{Lemma~\ref*{lemma:compose_left_kan_simplices}} to $\sigma|\{0,1,2,\overline{0}\}$ guarantees that $\sigma|\{0,2,\overline{0}\}$ is left Kan as required.

    \item We need to show that for all Kan complexes $K$, the lifting problem
      \begin{equation*}
        \begin{tikzcd}
          K^{\flat}
          \arrow[r]
          \arrow[d]
          & \LS(\category{C})^{\clubsuit}
          \arrow[d]
          \\
          K^{\sharp}
          \arrow[r]
          \arrow[ur, dashed]
          & \S^{\sharp}
        \end{tikzcd}
      \end{equation*}
      has a solution. To see this, note that each morphism in $K$ must be mapped to an equivalence in $\LS(\category{C})$, and a morphism $a\colon \mathcal{F} \to \mathcal{G}$ in $\LS(\category{C})$ is an equivalence and only if it is $p$-cartesian, and lies over an equivalence in $\S$.
      \begin{itemize}
        \item By \hyperref[thm:mainthm_walker_fernando]{Theorem~\ref*{thm:mainthm_walker_fernando}}, the morphism $a$ is $p$-cartesian if and only if the map $\sigma\colon \Delta^{3}_{\dagger} \to \ICCat$ to which it is adjoint factors through the map $\Delta^{3}_{\dagger} \to \Delta^{3}_{\sharp}$. In particular, the restriction $\sigma|\Delta^{\{0,1,\overline{0}\}}$ is thin.

        \item The image of $a$ in $\S$ is the restriction $\sigma|\Delta^{\{0,1\}}$, which is therefore an equivalence.
      \end{itemize}
      It follows from \hyperref[eg:strictly_commuting_left_kan]{Example~\ref*{eg:strictly_commuting_left_kan}} that the morphism $a$ is automatically left Kan. Thus, each morphism in $K$ is mapped to a left Kan morphism in $\LS(\category{C})$, and we are justified in marking them; our lifting problems admit solutions.
  \end{enumerate}
\end{proof}

\section{Pull-push of local systems}
\label{sec:the_non_monoidal_construction}

For any cocomplete $\infty$-category $\category{C}$, we have constructed an $\infty$-category $\LS(\category{C})$ of local systems on $\category{C}$, together with a map $p\colon \LS(\category{C}) \to \S$. We have further shown that this map is a bicartesian fibration.
\begin{itemize}
  \item As a cartesian fibration, it classifies the pullback functor $(-)^{*}\colon \S\op \to \ICat$; for any morphism $X \to Y$ in $\S$, this functoriality gives us a map
    \begin{equation*}
      \begin{tikzcd}
        f^{*}\colon \Fun(Y, \category{C}) \to \Fun(X, \category{C}).
      \end{tikzcd}
    \end{equation*}

  \item As a cocartesian fibration, it classifies the left Kan extension functor $(-)_{!}\colon \S \to \ICat$; for any morphism $X \to Y$, in $\S$, this functoriality gives us a map
    \begin{equation*}
      f_{!}\colon \Fun(X, \category{C}) \to \Fun(Y, \category{C}).
    \end{equation*}
\end{itemize}

We might wonder if we can combine these functorialities. Given a \emph{span} of morphisms in $\S$, i.e.\ a diagram in $\S$ of the form
\begin{equation}
  \label{eq:span_of_spaces}
  \begin{tikzcd}
    & Y
    \arrow[dl, swap, "g"]
    \arrow[dr, "f"]
    \\
    X
    && X'
  \end{tikzcd},
\end{equation}
we can pull back along $g$ and push forward along $f$, giving a map $f_{!} \circ g^{*}\colon \Fun(X, \category{C}) \to \Fun(X', \category{C})$ via the composition
\begin{equation*}
  \begin{tikzcd}[column sep=tiny]
    & \Fun(Y, \category{C})
    \arrow[dr, "f_{!}"]
    \\
    \Fun(X, \category{C})
    \arrow[rr, dashed, swap, "f_{!} \circ g^{*}"]
    \arrow[ur, "g^{*}"]
    && \Fun(X', \category{C})
  \end{tikzcd}.
\end{equation*}
Our goal in this section is to upgrade this construction to a functor $\hat{r}\colon \Span(\S) \to \ICat$, where $\Span(\S)$ is an $\infty$-category with the following rough description.
\begin{itemize}
  \item The objects of $\Span(\S)$ are the same as the objects of $\S$, i.e.\ spaces $X$, $Y$, etc.

  \item The morphisms from $X$ to $X'$ are given by spans in $\S$, i.e.\ diagrams of the form given in \hyperref[eq:span_of_spaces]{Equation~\ref*{eq:span_of_spaces}}.

  \item The $2$-simplices witnessing the composition of two morphisms
    \begin{equation*}
      X \overset{h}{\leftarrow} Y \overset{f}{\rightarrow} X' \quad\text{and}\quad X' \overset{g}{\leftarrow} Y \overset{j}{\rightarrow} X''
    \end{equation*}
    are diagrams of the form
    \begin{equation}
      \label{eq:diagram_witnessing_composition}
      \begin{tikzcd}
        && Z
        \arrow[dl, swap, "g'"]
        \arrow[dr, "f'"]
        \\
        & Y
        \arrow[dl, swap, "h"]
        \arrow[dr, "f"]
        && Y'
        \arrow[dl, swap, "g"]
        \arrow[dr, "j"]
        \\
        X
        && X'
        && X''
      \end{tikzcd},
    \end{equation}
    where the square formed is pullback. The corresponding composition is then given by the span
    \begin{equation*}
      \begin{tikzcd}
        & Z
        \arrow[dl, swap, "h \circ g'"]
        \arrow[dr, "j \circ f'"]
        \\
        X
        && X''
      \end{tikzcd}.
    \end{equation*}
\end{itemize}
The non-trivial part of constructing such a functor $\hat{r}$ will be showing that it respects composition in a homotopy-coherent way; we need that for any diagram of the form given in \hyperref[eq:diagram_witnessing_composition]{Equation~\ref*{eq:diagram_witnessing_composition}}, both ways of composing morphisms from left to right in the diagram
\begin{equation*}
  \begin{tikzcd}[column sep=tiny]
    && \Fun(Z, \category{C})
    \arrow[dr, "(f')_{!}"]
    \\
    & \Fun(Y, \category{C})
    \arrow[ur, "(g')^{*}"]
    \arrow[dr, "f_{!}"]
    && \Fun(Y', \category{C})
    \arrow[dr, "j_{!}"]
    \\
    \Fun(X, \category{C})
    \arrow[ur, "h^{*}"]
    && \Fun(X', \category{C})
    \arrow[ur, "g^{*}"]
    && \Fun(X'', \category{C})
  \end{tikzcd},
\end{equation*}
agree up to a specified natural equivalence. The arrows $h^{*}$ and $j_{!}$ are the same in both cases, and can be ignored. This condition can thus be distilled down to the so-called \emph{Beck-Chevalley condition:}
\begin{itemize}
  \item For any pullback square in $\S$
    \begin{equation*}
      \begin{tikzcd}
        Z
        \arrow[r, "f'"]
        \arrow[d, swap, "g'"]
        & Y'
        \arrow[d, "g"]
        \\
        Y
        \arrow[r, "f"]
        & X
      \end{tikzcd},
    \end{equation*}
    the comparison map
    \begin{equation*}
      f_{!} \circ g^{*} \overset{\eta}{\Rightarrow} f_{!} \circ g^{*} \circ (f')^{*} \circ (f')_{!} \overset{\simeq}{\Rightarrow} f_{!} \circ f^{*} \circ (g')^{*} \circ (f')_{!} \overset{\epsilon}{\Rightarrow} (g')^{*} \circ (f')_{!}
    \end{equation*}
    is an equivalence.
\end{itemize}

One can phrase the Beck-Chevalley condition at the level of fibrations rather than functors into $\ICat$. This is a classical definition, here more or less lifted from \cite{luriehopkins2013ambidexterity}.

\begin{definition}
  \label{def:beck_chevalley_fibration}
  A bicartesian fibration of quasicategories $p\colon \category{X} \to \category{T}$ such that $\category{T}$ admits pullbacks is called a \defn{Beck-Chevalley fibration} if it has the following property.
  \begin{itemize}
    \item[(BC)]\label{item:beck_chevalley_condition} For any commuting square in $\category{X}$
      \begin{equation*}
        \begin{tikzcd}
          z
          \arrow[r, "f'"]
          \arrow[d, swap, "g'"]
          & y
          \arrow[d, "g"]
          \\
          y'
          \arrow[r, "f"]
          & x
        \end{tikzcd}
      \end{equation*}
      lying over a pullback square in $\category{T}$, if the morphism $f$ is $p$-cocartesian and the morphisms $g$ and $g'$ are $p$-cartesian, then the morphism $f'$ is $p$-cocartesian.
  \end{itemize}
\end{definition}

In fact, it turns out that this condition is sufficient to guarantee that the the pull-push procedure is functorial.

\begin{proposition}
  \label{prop:beck_chevalley_fibration_straightens_to_pp_functoriality}
  Let $p\colon \category{X} \to \category{T}$ be a Beck-Chevalley fibration. Then there is a functor $\Span(\category{T}) \to \ICat$ sending an object $t \in \Span(\category{T})$ to the fiber $\category{X}_{t}$, and a span $t \overset{b}{\leftarrow} s \overset{a}{\rightarrow} t'$ to the composition $a_{!} \circ b^{*}\colon \category{X}_{t} \to \category{X}_{t'}$.
\end{proposition}
\begin{proof}
  It follows immediately from \hyperref[thm:new_barwick]{Theorem~\ref*{thm:new_barwick}} that there is a cocartesian fibration
  \begin{equation*}
    \tilde{p}\colon \Span^{\mathrm{cart}}(\category{X}) \to \Span(\category{T}),
  \end{equation*}
  where $\Span^{\mathrm{cart}}$ denotes the category of spans whose backwards-facing legs are constrained to be $p$-cartesian; that is, $\category{X}^{\dagger} = \category{X}^{\mathrm{cart}}$. Straightening gives the result that we want.
\end{proof}

We would like to show that pull-push of local systems is functorial. According to \hyperref[prop:beck_chevalley_fibration_straightens_to_pp_functoriality]{Proposition~\ref*{prop:beck_chevalley_fibration_straightens_to_pp_functoriality}}, it suffices to show the following, the proof of which will come at the end of this section.

\begin{proposition}
  \label{prop:local_systems_are_beck_chevalley}
  The functor $p\colon \LS(\category{C}) \to \S$ is a Beck-Chevalley fibration.
\end{proposition}

We first prove a helpful lemma. This is simply a reformulation of the definition of a left Kan extension (\hyperref[def:nat_xfo_exhibiting_left_kan_ext]{Definition~\ref*{def:nat_xfo_exhibiting_left_kan_ext}}) in the special case that we consider left Kan extensions along maps of Kan complexes.
\begin{lemma}
  \label{lemma:left_kan_ext_along_kan_complexes}
  Let $f\colon X \to Y$ be a map between Kan complexes, let $F\colon X \to \category{C}$ and $G\colon Y \to \category{C}$ be functors, and let $\eta\colon F \to G \circ y$ be a natural transformation. Then $\eta$ exhibits $G$ as a left Kan extension of $F$ along $f$ if and only if, for all $y \in Y$, the natural transformatin $F \circ \pi \Rightarrow \underline{G(y)}$ defined by the pasting diagram
  \begin{equation*}
    \begin{tikzcd}[row sep=large, column sep=large]
      X_{/y}
      \arrow[r, "\pi", ""{name=LA, swap}]
      \arrow[d, ""{name=LD}]
      & X
      \arrow[d, swap, "f"]
      \arrow[r, "F", ""{below, name=M}]
      & \category{C}
      \\
      \{y\}
      \arrow[r, hook, ""{name=LB}]
      & Y
      \arrow[ur, swap, "G"]
      \arrow[from=M, Rightarrow, shorten=2ex, swap, "\eta"]
    \end{tikzcd},
  \end{equation*}
  exhibits $G(y)$ as the colimit of $F \circ \pi$, where the left-hand square is homotopy pullback.
\end{lemma}
\begin{proof}
  We note that we can factor the above square into three squares
  \begin{equation*}
    \begin{tikzcd}
      X_{/y}
      \arrow[r, "\simeq"]
      \arrow[d]
      & X^{/y}
      \arrow[r]
      \arrow[d]
      & Y^{\Delta^{1}} \times_{Y} X
      \arrow[r, "\simeq"]
      \arrow[d]
      & X
      \arrow[d, "f"]
      \\
      \{y\}
      \arrow[r, equals]
      & \{y\}
      \arrow[r]
      & Y
      \arrow[r, equals]
      & Y
    \end{tikzcd},
  \end{equation*}
  where the middle square is a strict pullback. Since the map $Y^{\Delta^{1}} \times_{Y}X \to Y$ is a Kan fibration, the middle square is also a homotopy pullback. We note that the left and right squares are also homotopy pullbacks, since the horizontal morphisms are weak equivalences. Thus, the outer square is a homotopy pullback; one easily checks that the natural equivalence $f \circ \pi \Rightarrow \underline{y}$ defined by pasting the three squares agrees with the natural transformation $\alpha$ defined in \hyperref[notation:rund_um_undercategories]{Notation~\ref*{notation:rund_um_undercategories}}.
\end{proof}

\begin{proof}[Proof of Theorem \ref{prop:local_systems_are_beck_chevalley}]
  We have already shown that $p$ is a bicartesian fibration, and we know that $\S$ admits pullbacks. Therefore, it suffices to show that $p$ has \hyperref[item:beck_chevalley_condition]{Property~(BC)}.

  We consider a square $\sigma\colon \Delta^{1} \times \Delta^{1} \cong \Delta^{\{0,1,2\}} \amalg_{\Delta^{\{0,2\}}}\Delta^{\{0,1',2\}} \to \LS(\category{C})$ with the following properties.
  \begin{enumerate}
    \item The restriction $\sigma|\Delta^{\{0,1\}}$ is $p$-cartesian.

    \item The restriction $\sigma|\Delta^{\{1',2\}}$ is $p$-cartesian.

    \item The restriction $\sigma|\Delta^{\{1,2\}}$ is $p$-cocartesian.

    \item The square $\sigma$ lies over a pullback square
      \begin{equation*}
        p(\sigma) =
        \begin{tikzcd}
          X_{0}
          \arrow[r, "f'"]
          \arrow[d, swap, "g'"]
          & X_{1'}
          \arrow[d, "g"]
          \\
          X_{1}
          \arrow[r, "f"]
          & X_{2}
        \end{tikzcd}
      \end{equation*}
      in $\S$.
  \end{enumerate}
  We need to show that $\sigma|\Delta^{\{0,1'\}}$ is $p$-cocartesian.

  The map $\sigma$ adjunct to a map
  \begin{equation*}
    \tau\colon \Delta^{\{0,1,2,\overline{2}, \overline{1}, \overline{0}\}}_{\dagger}\amalg_{\Delta^{\{0,2,\overline{2}, \overline{0}\}}_{\dagger}} \Delta^{\{0,1',2,\overline{2}, \overline{1}', \overline{0}\}}_{\dagger} \to \ICCat
  \end{equation*}
  such that $\tau|\Delta^{\{\overline{2}, \overline{1}, \overline{0}\}} \amalg_{\Delta^{\{\overline{2},\overline{0}\}}} \Delta^{\{\overline{2}, \overline{1}', \overline{0}\}}$ is the constant functor with value $\category{C}$. We can read off the following further properties of $\tau$, corresponding to the properties of $\sigma$ above.
  \begin{enumerate}
    \item The simplices $\tau|\Delta^{\{0,1,\overline{0}\}}$ and $\tau|\Delta^{\{0,\overline{1},\overline{0}\}}$ are thin.

    \item The simplices $\tau|\Delta^{\{1',2,\overline{1}'\}}$ and $\tau|\Delta^{\{1',\overline{2},\overline{1}'\}}$ are thin.

    \item The simplex $\tau|\Delta^{\{1,2,\overline{1}\}}$ is left Kan.

    \item The restriction $\tau|\Delta^{\{0,1,2\}} \amalg_{\Delta^{\{0,2\}}}\Delta^{\{0,1',2\}}$ is equal to $p(\sigma)$.
  \end{enumerate}
  The condition that $\sigma|\Delta^{\{0, 1'\}}$ is $p$-cocartesian corresponds to the condition that the simplex $\tau|\Delta^{\{0,1,\overline{0}\}}$ is left Kan.

  The diagram $\tau$ contains a lot of redundant data, which we would now like to consolidate. We first shuffle some data around our diagram. Applying \hyperref[lemma:transport_left_kan_simplices]{Lemma~\ref*{lemma:transport_left_kan_simplices}} to the simplex $\tau|\Delta^{\{1,2,\overline{1}, \overline{0}\}}$, we see that $\tau|\Delta^{\{1,2,\overline{0}\}}$ is left Kan. Applying \hyperref[lemma:transport_thin_simplices]{Lemma~\ref*{lemma:transport_thin_simplices}} to the simplex $\tau|\Delta^{\{1',2, \overline{1}', \overline{0}\}}$, we see that $\tau|\Delta^{\{1',2, \overline{0}\}}$ is thin.

  We can now study a subdiagram which contains all the information we need. We consider the restriction
  \begin{equation*}
    \tau|\Delta^{\{0,1,2,\overline{0}\}} \amalg_{\Delta\{0,2,\overline{0}\}} \Delta^{\{0,1',2,\overline{0'}\}}.
  \end{equation*}
  This is determined, up to specific choices of compositions, by the diagram
  \begin{equation*}
    \begin{tikzcd}[row sep=large, column sep=large]
      X_{0}
      \arrow[r, "f'"]
      \arrow[d, swap, "g'"]
      & X_{1'}
      \arrow[d, swap, "g"]
      \arrow[r, "F", ""{swap, name=M}]
      & \category{C}
      \\
      X_{1}
      \arrow[r, "f"]
      & X_{2}
      \arrow[ur, swap, "G"]
      \arrow[from=M, Rightarrow, shorten=2ex, swap, "\eta"]
    \end{tikzcd}.
  \end{equation*}
  Here, the square pictured is $\sigma$, and the triangle is $\tau|\Delta^{\{1',2,\overline{0}\}}$.

  Since the simplices $\tau|\Delta^{\{0,1,\overline{0}\}}$ and $\tau|\Delta^{\{1,2, \overline{0}\}}$ are thin, the pasting of the square and the triangle above is homotopic to the restriction $\tau|\Delta^{\{0,1,\overline{0}\}}$. This is the triangle which we wish to show is left Kan. According to \hyperref[lemma:left_kan_ext_along_kan_complexes]{Lemma~\ref*{lemma:left_kan_ext_along_kan_complexes}} it suffices to show that for all $x \in X_{1}$, the pasting diagram
  \begin{equation*}
    \begin{tikzcd}[row sep=large, column sep=large]
      (X_{0})_{/x}
      \arrow[r, "\pi"]
      \arrow[d]
      & X_{0}
      \arrow[r, "f'"]
      \arrow[d, swap, "g'"]
      & X_{1'}
      \arrow[d, swap, "g"]
      \arrow[r, "F", ""{swap, name=M}]
      & \category{C}
      \\
      \{x\}
      \arrow[r, hook]
      & X_{1}
      \arrow[r, "f"]
      & X_{2}
      \arrow[ur, swap, "G"]
      \arrow[from=M, Rightarrow, shorten=2ex, swap, "\eta"]
    \end{tikzcd}
  \end{equation*}
  exhibits $G(f(x))$ as the colimit of $F \circ f' \circ \pi$. But applying the pasting law for homotopy pullbacks, this follows directly from the assumption that $\eta$ exhibits $G$ as a left Kan extension of $F$ along $f$.
\end{proof}

\section{Monoidal pull-push of local systems}
\label{sec:the_monoidal_construction}

The classical Grothendieck construction provides an equivalence between functors $\hat{p}\colon \category{D} \to \Cat$ and cartesian fibrations $p\colon \int \hat{p} \to \category{D}\op$. There are many conditions one can impose upon the functor $\hat{p}$, and many structures one can endow it with; it is natural to wonder whether these properties and structures can be captured in the fibration $p$.

An answer is known in the case that $\hat{p}$ is lax monoidal (the author learned about this fact from \cite{moeller2018monoidal}). Suppose $p\colon \category{C} \to \category{D}$ is a cartesian fibration of 1-categories which classifies a pseudofunctor $\hat{p}\colon \category{D}\op \to \Cat$. Further suppose the following:
\begin{itemize}
  \item The category $\category{C}$ carries a symmetric monoidal structure $(\otimes, I_{\category{C}}, \ldots)$.

  \item The category $\category{D}$ carries a symmetric monoidal structure $(\boxtimes, I_{\category{D}}, \ldots)$.

  \item The functor $p$ is strong monoidal: for objects $x$, $y \in \category{C}$, we have that $p(x \otimes y) = p(x) \boxtimes p(y)$, and $p(I_{\category{C}}) = I_{\category{D}}$.\footnote{The reason for the strict equality in this condition is that we have chosen specific monoidal functors $\otimes$ and $\boxtimes$, and specific unit objects $I_{\category{C}}$ and $I_{\category{D}}$. Later, we will replace these strict choices by fibrations which give the same data up to coherent homotopy.}

  \item The monoidal product $\otimes$ preserves $p$-cartesian morphisms in the sense that if $f$ and $g$ are $p$-cartesian morphisms, then $f \otimes g$ is also $p$-cartesian.
\end{itemize}
Under these conditions, the pseudofunctor $\hat{p}$ carries a lax monoidal structure $(\category{D}\op, \boxtimes) \to (\Cat, \times)$, described as follows. 
\begin{itemize}
  \item Restricting the tensor product
    \begin{equation*}
      \otimes\colon \category{C} \times \category{C} \to \category{C}
    \end{equation*}
    to the fibers $\category{C}_{d}$ and $\category{C}_{d'}$ of $p$ over $d$ and $d'$ yields the structure maps
    \begin{equation*}
      \hat{p}(d) \times \hat{p}(d') \to \hat{p}(d \boxtimes d').
    \end{equation*}
    That these structure maps form a pseudonatural transformation, i.e.\ that the necessary squares commute up to specified homotopy, follows from the assumption that the monoidal product preserves $p$-cartesian morphisms in each slot.

  \item The requirement that $p(I_{\category{C}}) = I_{\category{D}}$ can be rephrased to say that $I_{\category{D}} \in \hat{p}(I_{\category{C}})$. This gives us the structure map $\ast \to \hat{p}(I_{\category{D}})$, where $\ast$ is the unit object in $\Cat$.
\end{itemize}

If we want to study covariant rather than contravariant pseudofunctors, we should replace cartesian fibrations by cocartesian fibrations; of course, now we must assume that $\otimes$ preserves cocartesian edges rather than cartesian. The rest of the theory remains unchanged. These results also remain true in the $\infty$-categorical case, as we will show in \hyperref[ssc:the_lax_monoidal_grothendieck_construction]{Subsection~\ref*{ssc:the_lax_monoidal_grothendieck_construction}}.

As we saw in the last chapter, if we have a bicartesian fibration which satisfies the Beck-Chevalley condition (i.e.\ a Beck-Chevalley fibration), we can combine both cartesian and cocartesian functorialities into pull-push functoriality. Our aim in \hyperref[ssc:monoidal_beck_chevalley_fibrations]{Subsection~\ref*{ssc:monoidal_beck_chevalley_fibrations}} will be to show that the theory of Beck-Chevalley fibrations admits a monoidal generalization. More specifically, we will show the following. Consider a Beck-Chevalley fibration $r\colon \category{X} \to \category{T}$ such that $\category{X}$ carries a symmetric monoidal structure $\otimes$, and $\category{T}$ carries a symmetric monoidal structure $\boxtimes$. Under the assumption that $r$ is strong monoidal, and that $\otimes$ preserves both cartesian and cocartesian edges, the induced functor
\begin{equation*}
  \Span(\category{T}) \to \ICat
\end{equation*}
constructed in the previous chapter carries a lax monoidal structure. Note that this statement is not entirely original. A similar result in a somewhat different context is proved in \cite{spectralmackeyfunctors2}.

We will then apply our results to local systems. We will show that if $\category{C}$ is a symmetric monoidal $\infty$-category, then the $\infty$-category of $\category{C}$-local systems carries a symmetric monoidal structure, defined on objects by
\begin{equation*}
  (\mathcal{F}\colon X \to \category{C}) \otimes (\mathcal{G}\colon Y \to \category{C}) \quad=\quad X \times Y \overset{\mathcal{F} \times \mathcal{G}}{\to} \category{C} \times \category{C} \overset{\otimes}{\to} \category{C}
\end{equation*}
and that the functor $\LS(\category{C}) \to \S$ is can be given the structure of a monoidal Beck-Chevalley fibration, thus classifying a lax monoidal functor
\begin{equation*}
  (\Span(\S), \tilde{\times}) \to (\ICat, \times).
\end{equation*}


\subsection{The lax monoidal Grothendieck construction}
\label{ssc:the_lax_monoidal_grothendieck_construction}

Our aim in this section is to show an $\infty$-categorical version of the statement that a monoidal cartesian fibration $p\colon (\category{C}, \otimes) \to (\category{D}, \boxtimes)$ such that $\otimes$ preserves $p$-cartesian morphisms straightens to a lax monoidal pseudofunctor $\category{D}\op \to \Cat$ (as explained more concretely in the introduction to \hyperref[sec:the_monoidal_construction]{Section~\ref*{sec:the_monoidal_construction}}). We will assume familiarity with the theory of symmetric monoidal $\infty$-categories as laid out in \cite[Chap.~2]{luriehigheralgebra}; roughly, a symmetric monoidal $\infty$-category is defined to be a cocartesian fibration classifying a commutative monoid in $\ICat$. Our first goal will be to rephrase some of the results there in terms of cartesian fibrations rather than cocartesian fibrations.

\begin{notation}
  We will frequently refer to the following maps in $\Finp\op$.
  \begin{itemize}
    \item Denote by $\rho^{i}\colon \langle n \rangle \to \langle 1 \rangle$ the map in $\Finp$ sending $i \mapsto 1$ and everything else to $\ast$. Denote the same map in $\Finp\op$ by $\rho_{i}$.

    \item Denote by $\mu\colon \langle 2 \rangle \to \langle 1 \rangle$ the active map in $\Finp$. We will denote the same map in $\Finp\op$ also by $\mu$.
  \end{itemize}
\end{notation}

\begin{definition}
  A \defn{CSMC} (contravariantly-presented symmetric monoidal $\infty$-category) is a cartesian fibration $p\colon \category{C}_{\otimes} \to \Finp\op$ such that the contravariant transport maps $\rho_{i}^{*}\colon (\category{C}_{ \otimes})_{\langle n \rangle} \to (\category{C}_{ \otimes})_{\langle 1 \rangle}$ are the canonical projections exhibiting $(\category{C}_{ \otimes})_{\langle n \rangle}$ as an $n$-fold homotopy product.
\end{definition}

\begin{notation}
  For any CSMC $\category{C}_{\otimes} \to \Finp\op$, we will denote the fiber $(\category{C}_{\otimes})_{\langle 1 \rangle}$ simply by $\category{C}$.
\end{notation}

The equivalence $(\category{C}_{\otimes})_{\langle n \rangle} \simeq \category{C}^{n}$ allow us to trade maps into $(\category{C}_{\otimes})_{\langle n \rangle}$ for $n$ maps into $\category{C}$, well-defined up to equivalence. Given a diagram $a\colon K \to (\category{C}_{\otimes})_{\langle n \rangle}$, we will often abuse this terminology by calling any such corresponding diagrams $a_{i}\colon K \to \category{C}$ `the' components of $a$, as long as we are making reference only to properties of these components which are preserved under equivalence.

\begin{definition}
  \label{def:terminology_about_csmcs}
  A \defn{map} between CSMCs $q\colon \category{C}_{\otimes} \to \Finp\op$ and $p\colon \category{D}_{\boxtimes} \to \Finp\op$ is a functor $r$ making the diagram
  \begin{equation*}
    \begin{tikzcd}
      \category{C}_{\otimes}
      \arrow[rr, "r"]
      \arrow[dr, swap, "q"]
      && \category{D}_{\boxtimes}
      \arrow[dl, "p"]
      \\
      & \Finp\op
    \end{tikzcd}
  \end{equation*}
  commute. Given a map of CSMCs as above, we will further make use of the following terminology.
  \begin{itemize}
    \item The map $r$ is a \defn{monoidal functor} if it sends $q$-cartesian morphisms to $p$-cartesian morphisms.

      \textit{This implies, for example, that $r(x \otimes y) \simeq r(x) \boxtimes r(y)$, and that $r(I_{\category{C}}) \simeq I_{\category{D}}$. It also automatically implies that the diagrams encoding associativity, etc.,\ commute up to coherent homotopy.}

    \item An edge $f$ in $(\category{C}_{\otimes})_{\langle n \rangle}$ is \defn{componentwise cartesian (resp.\ componentwise cocartesian)} if for each $1 \leq i \leq n$, the transport $\rho_{i}^{*}(f)$ is $r|\langle 1 \rangle$-cartesian (resp.\ cocartesian).

      \textit{We can think of a morphism $f$ in $(\category{C}_{\otimes})_{\langle n \rangle}$ as an $n$-tuple of morphisms $f_{i}$ in $\category{C}$. We say that $f$ is componentwise (co)cartesian if each component $f_{i}$ is (co)cartesian as an edge of the underlying fibration $\category{C} \to \category{D}$.}

    \item The tensor product $\otimes$ \defn{preserves cartesian (resp.\ cocartesian) edges} if for all $\phi\colon \langle n \rangle \leftarrow \langle m \rangle \in \Finp\op$, the associated functor $\phi^{*}\colon (\category{C}_{\otimes})_{\langle m \rangle} \to (\category{C}_{\otimes})_{\langle n \rangle}$ sends componentwise cartesian (resp.\ componentwise cocartesian) morphisms contained in the fiber $(\category{C})_{\langle m \rangle}$ to componentwise cartesian (resp.\ componentwise cocartesian) morphisms contained in the fiber $(\category{C}_{\otimes})_{\langle n \rangle}$.

      \textit{We will show in \hyperref[lemma:co_cartesian_preservation_determined_by_bifunctor]{Lemma~\ref*{lemma:co_cartesian_preservation_determined_by_bifunctor}} that this is equivalent to demanding that if morphisms $f$ and $g$ in $\category{C}$ are $r|\langle 1 \rangle$-(co)cartesian, then $f \otimes g$ is as well.}

    \item The CSMC $q\colon \category{C}_{\otimes} \to \Finp\op$ is \defn{cartesian-compatible (resp.\ cocartesian-compatible)} an edge $f$ in the fiber $(\category{C}_{\otimes})_{\langle n \rangle}$ is $r$-cartesian (resp.\ cocartesian) if and only if it is componentwise cartesian.

      \textit{This is a technical condition which is true for all CSMCs which we will consider; it says that the way we encode the monoidal structure on $\category{C}$ into a cartesian fibration is well-behaved with respect to the fibration $r$.}
  \end{itemize}
\end{definition}

\begin{lemma}
  \label{lemma:co_cartesian_preservation_determined_by_bifunctor}
  Let
  \begin{equation*}
    \begin{tikzcd}
      \category{C}_{\otimes}
      \arrow[rr, "r"]
      \arrow[dr, swap, "q"]
      && \category{D}_{\boxtimes}
      \arrow[dl, "p"]
      \\
      & \Finp\op
    \end{tikzcd}
  \end{equation*}
  be a map between CSMCs. The tensor product $\otimes$ preserves cartesian (resp.\ cocartesian) morphisms if and only if for all $r|\langle 1 \rangle$-cartesian (resp.\ $r|\langle 1 \rangle$-cocartesian) morphisms $f$ and $g$ in $\category{C}$, the morphism $f \otimes g$ is also $r|\langle 1 \rangle$-cartesian (resp.\ $r|\langle 1 \rangle$-cocartesian).
\end{lemma}
\begin{proof}
  We consider the cartesian case. The cocartesian case is identical.

  Suppose that $\otimes$ preserves $r$-cartesian morphisms, and let $f$ and $g$ be $r|\langle 1 \rangle$-cartesian morphisms in $\category{C}$. Then there is a morphism $[f, g]$ in $( \category{C}_{\otimes} )_{\langle 2 \rangle}$ with components $f$ and $g$. Taking $\phi$ to be the active map $\langle 2 \rangle \to \langle 1 \rangle$ shows that $f \otimes g$ is $r|\langle 1 \rangle$-cartesian.

  Conversely, suppose that for any $r|\langle 1 \rangle$-cartesian morphisms $f$ and $g$, the morphism $f \otimes g = \phi^{*}([f, g])$ in $\category{C}$ is $r|\langle 1 \rangle$-cartesian. By associativity of the tensor product and induction, we have that the $n$-ary product of $r|\langle 1 \rangle$-cartesian morphisms is again $r|\langle 1 \rangle$-cartesian for $n \geq 2$. The image of the map $\alpha^{*}$, for $\alpha\colon \langle 0 \rangle \to \langle 1 \rangle$, i.e.\ the 0-ary tensor product, is equivalent to $\id_{I}$, where $I$ is the unit object of $\category{C}$, and is thus an equivalence, hence also $r|\langle 1 \rangle$-cartesian.

  Now let $\psi\colon \langle m \rangle \to \langle n \rangle$ be a general map in $\Finp$, and let $f = [f_{1}, \ldots, f_{m}]$ be a componentwise cartesian map. The $i$th component of $\psi^{*}(f)$ is given by $\bigotimes_{\psi(j) = i} f_{j}$, which is $r|\langle 1 \rangle$-cartesian no matter the cardinality of $\psi^{-1}(i)$. Thus, $\psi^{*}(f)$ is $r$-cartesian.
\end{proof}


\begin{lemma}
  \label{lemma:equivalent_conditions_to_be_cartesan}
  Let $q\colon \category{C}_{\otimes} \to \Fin\op$ and $p\colon \category{D}_{\otimes} \to \Finp\op$ be CSMCs, and let $r$ be an inner fibration $\category{C}_{\otimes} \to \category{D}_{\boxtimes}$ such that the diagram
  \begin{equation*}
    \begin{tikzcd}
      \category{C}_{\otimes}
      \arrow[rr, "r"]
      \arrow[dr, swap, "q"]
      && \category{D}_{\boxtimes}
      \arrow[dl, "p"]
      \\
      & \Finp\op
    \end{tikzcd}
  \end{equation*}
  commutes. Then the following are equivalent.
  \begin{enumerate}
    \item The map $r$ is a cartesian fibration.

    \item The map $r$ has the following properties.
      \begin{enumerate}
        \item The restriction $r|\langle 1 \rangle$ is a cartesian fibration.

        \item The map $r$ is a monoidal functor.

        \item The tensor product $\otimes$ preserves cartesian edges.

        \item The CSMC $q$ is cartesian-compatible.
      \end{enumerate}
  \end{enumerate}
\end{lemma}
\begin{proof}
  Suppose that 1.\ holds, i.e.\ that $r$ is a cartesian fibration. Then a) holds: $r|\langle 1 \rangle$ is a cartesian fibration because the pullback of a cartesian fibration is again a cartesian fibration.

  We now show that b) holds, i.e.\ that the map $r$ sends $q$-cartesian morphisms to $p$-cartesian morphisms. To this end, let $f\colon c \to c' \in \category{C}_{\otimes}$ be a $q$-cartesian morphism, and consider the image $p(f)\colon r(c) \to r(c')$ in $\category{D}_{\otimes}$. Let $g\colon \tilde{d} \to r(c')$ be a $p$-cartesian lift of $q(f) \in \Finp\op$, and $\hat{g}\colon \hat{d} \to c'$ a $r$-cartesian lift of $g$. By \cite[Prop.~2.4.1.3]{highertopostheory}, $\hat{g}$ is a $q$-cartesian lift of $q(f)$, so $f$ and $\hat{g}$ are equivalent as morphisms in $\category{C}_{\otimes}$. Thus $r(f)$ and $g$ are equivalent as morphisms in $\category{D}_{\otimes}$, so $r(f)$ is $p$-cartesian since $g$ is. This proves b).

  We now show that d) holds. We do this in two steps. First, we show that if some morphism $f$ in $(\category{C}_{\otimes})_{\langle n \rangle}$ is $r$-cartesian, then for all $\phi\colon \langle n \rangle \to \langle m \rangle$, the morphism $\phi^{*}(f)$ is $r$-cartesian. This follows from the fact that for a square in $\category{C}_{\otimes}$
  \begin{equation*}
    \begin{tikzcd}
      c_{0}'
      \arrow[r, "\phi^{*}(f)"]
      \arrow[d, swap, "u"]
      & c_{1}'
      \arrow[d, "v"]
      \\
      c_{0}
      \arrow[r, "f"]
      & c_{1}
    \end{tikzcd}
  \end{equation*}
  in which $u$ and $v$ are $q$-cartesian (and hence $r$-cartesian) and $f$ is $r$-cartesian, $\phi^{*}(f)$ is also $r$-cartesian. Applying this result to the inert maps $\rho^{i}\colon \langle n \rangle \to \langle 1 \rangle$ implies that if a morphism $f$ in $(\category{C}_{\otimes})_{\langle n \rangle}$ is $r$-cartesian, then the components $f_{i}$ are $r$-cartesian, hence also $r|\langle 1 \rangle$-cartesian. This is one direction of d).

  We now show the other direction of d): if a morphism in $(\category{C}_{\otimes})_{\langle n \rangle}$ is componentwise cartesian, then it is $r$-cartesian. To see this, let $f_{i}$ be $r|\langle 1 \rangle$-cartesian morphisms, $1 \leq i \leq n$. We can find a morphism $f \in (\category{C}_{\otimes})_{\langle n \rangle}$ with components $f_{i}$. We would like to show that $f$ is $r$-cartesian. To this end, consider the morphisms $r(f_{i}) = \overline{f}_{i}$. We can find a morphism $\overline{f}$ in $(\category{D}_{\boxtimes})_{\langle n \rangle}$ with components $\overline{f}_{i}$, and can take a $r$-cartesian lift $\hat{f}$ of $\overline{f}$. The components of $\hat{f}$ are equivalent to the $f_{i}$, and are hence $r$-cartesian. But then $f$ is equivalent to $\hat{f}$, so $f$ is $r$-cartesian as promised. This proves d).

  We have now shown that under the assumption that 1.\ holds, a the pushforward maps preserve $r$-cartesian morphisms. This, together with d), proves c).

  Now, suppose that 2.\ holds. We immediately note that c) implies that for each $\langle n \rangle \in \Finp\op$, the restriction $r|\langle n \rangle$ is a cartesian fibration, and an edge is $r|\langle n \rangle$-cartesian if and only if it is $r$-cartesian.

  We now show that $r$ admits cartesian lifts. Let $f\colon d \to d'$ be an edge in $\category{D}_{\boxtimes}$ lying over an edge $\phi\colon \langle n \rangle \leftarrow \langle m \rangle$ in $\Finp\op$, and let $c'$ be a lift of $d'$ to $\category{C}_{\otimes}$. We can take a $q$-cartesian lift $g\colon c'' \to c$ of $f$, whose image in $\category{D}_{\boxtimes}$ is by b) a $p$-cartesian map $h\colon d'' \to d'$. This gives us the solid data
  \begin{equation*}
    \begin{tikzcd}
      & c''
      \arrow[dr, "g"]
      \\
      c
      \arrow[ur, dashed, "k"]
      \arrow[rr, dashed, swap, "\ell"]
      && c'
    \end{tikzcd}
    \qquad\text{in }\category{C}_{\otimes},
  \end{equation*}
  \begin{equation*}
    \begin{tikzcd}
      & d''
      \arrow[dr, "h"]
      \\
      d
      \arrow[ur, dashed, "j"]
      \arrow[rr, swap, "f"]
      && d'
    \end{tikzcd}
    \qquad\text{in }\category{D}_{\boxtimes}
  \end{equation*}
  \begin{equation*}
    \begin{tikzcd}
      & \langle n \rangle
      \arrow[dl, equals]
      \\
      \langle n \rangle
      && \langle m \rangle
      \arrow[ul, swap, "\phi"]
      \arrow[ll, "\phi"]
    \end{tikzcd}.
    \qquad\text{in }\Finp\op.
  \end{equation*}
  Using the fact that $h$ is $p$-cartesian, we can fill the $2$-simplex in $\category{D}_{\boxtimes}$, giving us in particular a map $j\colon d \to d''$. We can lift $j$ to an $r|\langle n \rangle$-cartesian morphism $k\colon c \to c''$, which is therefore also $r$-cartesian. Using that $r$ is an inner fibration, we can compose $g$ and $k$ relative to the simplex in $\category{D}_{\boxtimes}$, giving us a lift $\ell$ of $f$. But $\ell$ is the composition of two $r$-cartesian morphisms, and hence itself $r$-cartesian.
\end{proof}

\begin{note}
  \hyperref[lemma:equivalent_conditions_to_be_cartesan]{Lemma~\ref*{lemma:equivalent_conditions_to_be_cartesan}} gives us an $\infty$-categorical version of the monoidal Grothendieck construction of \cite{moeller2018monoidal} discussed at the begininning of this section. There, a monoidal structure is modelled as tuple $(\category{C}, \otimes, \dots)$. Because of the specific choice of a bifunctor $\otimes$ and the surrounding corresponding coherence data, strong conditions must be placed on the functor $r$: it must be monoidal `on the nose.' Here, we model our monoidal structure as a cartesian fibration, leaving these choices unmade. The strong notion of monoidality is thus replaced by the usual one.

  The connection between the above result and the monoidal Grothendieck construction, somewhat explicitly, is as follows. \hyperref[lemma:equivalent_conditions_to_be_cartesan]{Lemma~\ref*{lemma:equivalent_conditions_to_be_cartesan}} tells us that the data of a functor $(\category{C}, \otimes) \to (\category{D}, \boxtimes)$ satisfying conditions analogous to those given in \cite{moeller2018monoidal} is equivalent to a cartesian fibration between the corresponding CSMCs. A cartesian fibration $r$ between CSMCs $p\colon \category{C}_{\otimes} \to \Finp\op$ and $q\colon \category{D}_{\boxtimes} \to \Finp\op$ as above straightens to a $\category{D}_{\boxtimes}\op$-monoid in $\ICat$. But a $\category{D}_{\boxtimes}\op$-monoid in $\ICat$ can be essentially uniquely extended to an $\category{D}_{\boxtimes}\op$-algebra object in $\ICatCart$ \cite[Prop~2.4.2.4--2.4.2.6]{luriehigheralgebra}, which is to say, a lax monoidal functor $(\category{D}\op, \boxtimes) \to (\ICat, \times)$.
\end{note}

In the remainder of this section, we study a special type of CSMC, those whose tensor product is given by the cartesian product.

\begin{definition}
  A \defn{CSMC} $p\colon \category{D}_{\boxtimes} \to \Finp\op$ is \defn{cartesian} if the unit object $\ast$ is final, and if for all objects $X$ and $Y$ in $\category{D}$, the canonical maps $X \times \ast \leftarrow X \boxtimes Y \to \ast \times Y$ exhibit $X \boxtimes Y$ as the product of $X$ and $Y$.
\end{definition}

\begin{example}
  Let $\category{T}$ be a category with finite products. Then $\category{T}\op$ admits finite coproducts, and we can consider the cocartesian monoidal structure
  \begin{equation*}
    (\category{T}\op)^{\amalg} \to \Finp
  \end{equation*}
  of \cite[Construction~2.4.3.1]{luriehigheralgebra}. Taking the opposite of this functor gives us a cartesian fibration
  \begin{equation*}
    \category{T}_{\times} \to \Finp\op.
  \end{equation*}
  That this is a cartesian CSMC follows immediately from the fact that $(\category{T}\op)^{\amalg} \to \Finp$ is a cocartesian symmetric monoidal category.
\end{example}

In any category $\category{T}$ with pullbacks, one can form a category $\Span(\category{T})$ of spans in $\category{T}$. If $\category{T}$ also has a terminal object, hence all finite limits (including, of course, finite products), then the category of spans inherits a monoidal structure via
\begin{equation*}
  \left(
  \begin{tikzcd}[row sep=small, column sep=small]
    & Z
    \arrow[dl]
    \arrow[dr]
    \\
    X
    && Y
  \end{tikzcd}
  \right) \otimes \left(
  \begin{tikzcd}[row sep=small, column sep=small]
    & Z'
    \arrow[dl]
    \arrow[dr]
    \\
    X'
    && Y'
  \end{tikzcd}
  \right) \quad =
  \begin{tikzcd}[row sep=small, column sep=small]
    & Z \times Z'
    \arrow[dl]
    \arrow[dr]
    \\
    X \times X'
    && Y \times Y'
  \end{tikzcd}
\end{equation*}

We now construct the monoidal structure on this category of spans explicitly, starting from any cartesian CSMC $\category{T}_{\times} \to \Finp\op$.

\begin{proposition}
  \label{prop:cartesian_CSMC_always_exists}
  Suppose $\category{T}_{\times} \to \Finp\op$ is a cartesian CSMC whose underlying category $\category{T}$ admits pullbacks. Then there exists a (cocartesian-presented) symmetric monoidal category $\Span(\category{T})^{\times} \to \Finp$.
\end{proposition}
\begin{proof}
  We upgrade this functor to a functor of triples. We will consider the following triples.
  \begin{itemize}
    \item We define a triple structure $\triple{F}$ on $\Finp\op$, where
      \begin{itemize}
        \item $\category{F} = \Finp\op$

        \item $\category{F}\downdag = (\Finp\op)^{\simeq}$

        \item $\category{F}\updag = \Finp\op$
      \end{itemize}
      This is obviously adequate.

    \item We define a triple $\triple{T}$ as follows.
      \begin{itemize}
        \item $\category{T} = \category{T}_{\times}$

        \item $\category{T}\downdag = \category{T}_{\times} \times_{\Finp\op}(\Finp\op)^{\simeq}$

        \item $\category{T}\updag = \category{T}_{\times}$
      \end{itemize}
  \end{itemize}

  To see that this is adequate, we first note that $p$ admits relative pullbacks. To see this, note that each fiber admits pullbacks by virtue our assumption that $\category{T}$ admits pullbacks, and the identifications $( \category{T}_{\times} )_{\langle n \rangle} \simeq \category{T}^{n}$. The functoriality coming from $p$ implements products, and therefore commute with pullbacks. Thus, $\category{T}_{\times}$ admits pullbacks, and $p$ preserves pullbacks. This immediately implies that $\triple{T}$ is adequate:
  \begin{enumerate}
    \item Pullbacks of this form are simply squares with horizontal morphisms given by equivalences.

    \item Any pullback square lies over a pullback square in $\Finp\op$, so this reduces to the lemma about cartesian morphisms.
  \end{enumerate}

  We thus consider the map of triples
  \begin{equation*}
    \pi\colon \triple{T} \to \triple{F}.
  \end{equation*}

  One readily checks that $\pi$ satisfies the conditions of \hyperref[thm:old_barwick]{Theorem~\ref*{thm:old_barwick}}:
  \begin{itemize}
    \item The first condition holds because cocartesian morphisms lying over equivalences are themselves equivalences, hence also cartesian, so we can solve the relevant lifting problems using cartesian lifts.

    \item The second condition holds because a square whose bottom-horizontal morphism is an equivalence is pullback if and only if its top-horizontal morphism is an equivalence.
  \end{itemize}

  This gives us a functor $\Span\triple{T} \to \Span\triple{F}$. Pulling back along the equivalence $\Finp \to \Span\triple{F}$ gives us the cocartesian fibration functor $\Span(\category{T})^{\times} \to \Finp$. It remains only to check that the maps $\rho^{i}_{*}\colon \Span(\category{T})^{\times}_{\langle n \rangle} \to \Span(\category{T})^{\times}_{\langle 1 \rangle}$ are the canonical projections exhibiting
  \begin{equation*}
    \Span(\category{T})^{\times}_{\langle n \rangle} \simeq \left( \Span(\category{T})^{\times}_{\langle 1 \rangle} \right)^{n}.
  \end{equation*}

  In order to show this, we should show that for each simplicial set $K$, the map $a$ in the diagram
  \begin{equation*}
    \begin{tikzcd}
      \Map(K, \Span(\category{T})^{\times}_{\langle n \rangle})
      \arrow[r, "a"]
      \arrow[d, swap, "\simeq"]
      & \prod_{i = 1}^{n} \Map(K, \Span(\category{T})^{\times}_{\langle 1 \rangle})
      \arrow[d, "\simeq"]
      \\
      \Map^{\aCart}(\sd(K), (\category{T}_{\times})_{\langle n \rangle}) \arrow[r, "b"]
      \arrow[d, hook, "i"{swap}]
      & \prod_{i = 1}^{n} \Map^{\aCart}(\sd(K), (\category{T}_{\times})_{\langle 1 \rangle})
      \arrow[d, hook, "j"]
      \\
      \Map(\sd(K), (\category{T}_{\times})_{\langle n \rangle}) \arrow[r, "c", "\simeq"{swap}]
      & \prod_{i = 1}^{n} \Map(\sd(K), (\category{T}_{\times})_{\langle 1 \rangle})
    \end{tikzcd}
  \end{equation*}
  is an isomorphism in the category $\h \Kan$. In order to show this, it suffices to show that the map $b$ is an isomorphism in $\h \Kan$. We note that the morphisms $i$ and $j$ are inclusions of connected components, and the map $c$ is an isomorphism because $\category{T}_{\times} \to \Finp\op$ is a CSMC, so in order to show that $b$ is an isomorphism in $\h\Kan$, it suffices to show that it is essentially surjective. This follows from the fact that a square in $(\category{T}_{\times})_{\langle n \rangle} \simeq ((\category{T}_{\times})_{\langle 1 \rangle})^{n}$ is pullback if and only if each component is pullback.
\end{proof}

\subsection{Monoidal Beck-Chevalley fibrations}
\label{ssc:monoidal_beck_chevalley_fibrations}

In this section, we combine several results from earlier sections.

\begin{itemize}
  \item In \hyperref[sec:the_non_monoidal_construction]{Section~\ref*{sec:the_non_monoidal_construction}}, we showed that a bicartesian fibration $p\colon \category{X} \to \category{T}$ satisfying the Beck-Chevalley condition allows us to combine the functoriality $\category{T} \to \ICat$ and $\category{T}\op \to \ICat$ into push-pull functoriality $\Span(\category{T}) \to \ICat$. We called such bicartesian fibrations \emph{Beck-Chevalley fibrations.}

  \item In \hyperref[ssc:the_lax_monoidal_grothendieck_construction]{Subsection~\ref*{ssc:the_lax_monoidal_grothendieck_construction}} we showed that a cartesian fibration $p\colon \category{C} \to \category{D}$ between monoidal categories whose tensor products were subject to certain compatibility conditions classifies a lax monoidal functor $\category{D}\op \to \ICat$; and dually, that a cocartesian fibration $p$ satisfying dual compatibility conditions classifies a lax monoidal functor $\category{D} \to \ICat$.
\end{itemize}

We will now show that a Beck-Chevalley fibration $p\colon \category{X} \to \category{T}$ between monoidal categories whose tensor products satisfy appropriate compatibility conditions classifies a lax monoidal functor $\Span(\category{T}) \to \ICat$. We will call such fibrations \emph{monoidal Beck-Chevalley fibrations.}

\begin{definition}
  \label{def:monoidal_beck_chevalley_fibration}
  A \defn{monoidal Beck-Chevalley fibration} is a functor $r$ of CSMCs
  \begin{equation*}
    \begin{tikzcd}
      \category{X}_{\otimes}
      \arrow[rr, "r"]
      \arrow[dr, swap, "q"]
      && \category{T}_{\times}
      \arrow[dl, "p"]
      \\
      & \Finp\op
    \end{tikzcd},
  \end{equation*}
  where $\category{T}_{\times}$ is a cartesian CSMC and $\category{X}_{\otimes}$ is cartesian-compatible, with the following characteristics.
  \begin{enumerate}
    \item[(M1)] The map $r|\langle 1 \rangle$ is a Beck-Chevalley fibration.

    \item[(M2)] The map $r$ is monoidal.

    \item[(M3)] The tensor product $\otimes$ preserves $r$-cartesian morphisms.

    \item[(M4)] The tensor product $\otimes$ preserves $r$-cocartesian morphisms.

    \item[(M5)] The CSMC $q$ is cartesian-compatible.\footnote{By \hyperref[lemma:cocartesian_iff_componentwise_cocartesian]{Lemma~\ref*{lemma:cocartesian_iff_componentwise_cocartesian}}, $q$ is automatically cocartesian-compatible, so we needn't add this as a separate condition.}






  \end{enumerate}
\end{definition}

\begin{lemma}
  \label{lemma:cocartesian_iff_componentwise_cocartesian}
  Let $p$ and $q$ be CSMCs as below. Then for any monoidal functor $r$, the CSMC $q$ is cocartesian-compatible.
  \begin{equation*}
    \begin{tikzcd}
      \category{C}_{\otimes}
      \arrow[rr, "r"]
      \arrow[dr, swap, "q"]
      && \category{D}_{\boxtimes}
      \arrow[dl, "p"]
      \\
      & \Finp\op
    \end{tikzcd}
  \end{equation*}
\end{lemma}
\begin{proof}
  We need to show that $f$ in $(\category{C}_{\otimes})_{\langle n \rangle}$ is $r$-cocartesian if and only if it is componentwise cocartesian. Consider the following diagram of pullback squares.
  \begin{equation*}
    \begin{tikzcd}
      (\category{C}_{\otimes})_{\langle n \rangle}
      \arrow[r]
      \arrow[d, swap, "r|\langle n \rangle"]
      & \category{C}_{\otimes}
      \arrow[d, "r"]
      \\
      (\category{D}_{\otimes})_{\langle n \rangle}
      \arrow[r]
      \arrow[d, swap, "p|\langle n \rangle"]
      & \category{D}_{\otimes}
      \arrow[d, "p"]
      \\
      \{\langle n \rangle\}
      \arrow[r]
      & \Finp\op
    \end{tikzcd}
  \end{equation*}
  It follows from the dual to \cite[Cor.~4.3.1.15]{highertopostheory}\footnote{Note an unfortunate notational clash: our maps $p$, $q$, and $r$ do not agree with Lurie's.} that a morphism in $(\category{C}_{\otimes})_{\langle n \rangle}$ is $r|\langle n \rangle$-cocartesian if and only if its image in $\category{C}_{\otimes}$ is $r$-cocartesian. But a morphism is $r|\langle n \rangle$-cocartesian if and only if each component is $r|\langle 1 \rangle$-cocartesian.
\end{proof}

The conditions in \hyperref[def:monoidal_beck_chevalley_fibration]{Definition~\ref*{def:monoidal_beck_chevalley_fibration}} are to do only with the properties of the functor $r|\langle 1 \rangle$, together with properties of the tensor product $\otimes$; the only exception is (M5), which is a compatibility condition with how the monoidal structure is encoded as a cartesian fibration. We can also express these properties in terms directly in terms of the fibrations $p$, $q$, and $r$.
\begin{lemma}
  \label{lemma:equivalent_conditions_beck_chevalley_fibration}
  Consider a functor $r$ of CSMCs, where $\category{T}_{\times}$ is a cartesian CSMC.
  \begin{equation*}
    \begin{tikzcd}
      \category{X}_{\otimes}
      \arrow[rr, "r"]
      \arrow[dr, swap, "q"]
      && \category{T}_{\times}
      \arrow[dl, "p"]
      \\
      & \Finp\op
    \end{tikzcd}
  \end{equation*}
  The following are equivalent.
  \begin{enumerate}
    \item The map $r$ is a monoidal Beck-Chevalley fibration.

    \item The map $r$ has the following properties.
      \begin{enumerate}
        \item The category $\category{T}$ underlying $\category{T}_{\times}$ admits pullbacks.

        \item The map $r$ is a cartesian fibration.

        \item The functor $r|\langle 1 \rangle$ is a cocartesian fibration.

        \item The functor $r$ obeys the following interchange law: for any diagram
          \begin{equation}
            \label{eq:diagram_in_total_space_interchange_law}
            \begin{tikzcd}
              \vec{z}
              \arrow[r, "f'"]
              \arrow[d, swap, "g'"]
              & \vec{y}
              \arrow[d, "g"]
              \\
              \vec{y}'
              \arrow[r, "f"]
              & \vec{x}
            \end{tikzcd}
          \end{equation}
          in $\category{X}_{\otimes}$ whose image in $\category{T}_{\boxtimes}$ is pullback, and which lies over a square
          \begin{equation}
            \label{eq:diagram_in_base_space_interchange_law}
            \begin{tikzcd}
              \langle n \rangle
              & \langle n \rangle
              \arrow[l, "\alpha"{above}, "\simeq"{below}]
              \\
              \langle m \rangle
              \arrow[u, "\phi"]
              & \langle m \rangle
              \arrow[l, "\beta"{above}, "\simeq"{below}]
              \arrow[u, swap, "\psi"]
            \end{tikzcd},
          \end{equation}
          in $\Finp\op$ such that $\alpha$ and $\beta$ are equivalences, if $g$ and $g'$ are $r$-cartesian and $f$ is $r$-cocartesian, then $f'$ is $r$-cocartesian.
      \end{enumerate}
  \end{enumerate}
\end{lemma}
\begin{proof}
  That 1.\ implies a) and c) is clear, and b) follows from \hyperref[lemma:equivalent_conditions_to_be_cartesan]{Lemma~\ref*{lemma:equivalent_conditions_to_be_cartesan}}.

  We now show 1.\ implies d). Consider the diagram
  \begin{equation*}
    \begin{tikzcd}
      \langle n \rangle
      \arrow[rr, equals]
      \arrow[dd, equals]
      \arrow[dr, equals]
      && \langle n \rangle
      \arrow[dd, equals]
      \arrow[dr, leftarrow, "\alpha"]
      \\
      & \langle n \rangle
      \arrow[rr, leftarrow, crossing over, near end, "\alpha"]
      && \langle n \rangle
      \arrow[dd, leftarrow, "\psi"]
      \\
      \langle n \rangle
      \arrow[rr, equals]
      \arrow[dr, leftarrow, swap, "\phi"]
      && \langle n \rangle
      \arrow[dr, leftarrow, "\phi \circ \beta"]
      \\
      & \langle m \rangle
      \arrow[uu, near start, crossing over, swap, "\phi"]
      \arrow[rr, leftarrow, "\beta"]
      && \langle m \rangle
    \end{tikzcd}
    \qquad \text{in }\Finp\op,
  \end{equation*}
  which we should think of as a natural transformation from \hyperref[eq:diagram_in_base_space_interchange_law]{Diagram~\ref*{eq:diagram_in_base_space_interchange_law}} to a constant diagram.

  Beginning with the solid diagram below coming from \hyperref[eq:diagram_in_total_space_interchange_law]{Diagram~\ref*{eq:diagram_in_total_space_interchange_law}}, we can find $q$-cartesian lifts of the diagonal arrows. Filling we find a dashed cube
  \begin{equation*}
    \begin{tikzcd}
      \vec{u}
      \arrow[rr, dashed, "h'"]
      \arrow[dd, dashed]
      \arrow[dr, dashed, "\simeq"]
      && \vec{v}
      \arrow[dd, dashed]
      \arrow[dr, dashed, "\simeq"]
      \\
      & \vec{x}
      \arrow[rr, crossing over, near end, "f'"]
      && \vec{y}
      \arrow[dd]
      \\
      \vec{v}'
      \arrow[rr, dashed, "h", near start]
      \arrow[dr, dashed]
      && \vec{w}
      \arrow[dr, dashed]
      \\
      & \vec{y}'
      \arrow[uu, near start, crossing over, leftarrow]
      \arrow[rr, "f"]
      && \vec{z}
    \end{tikzcd}
    \qquad \text{in }\category{X}_{\otimes}
  \end{equation*}
  lying over the cube in $\Finp\op$, whose diagonal arrows are $q$-cartesian, and with equivalences as marked. Mapping this cube down to $\category{T}_{\times}$, we find a cube
  \begin{equation*}
    \begin{tikzcd}
      r(\vec{u})
      \arrow[rr, dashed]
      \arrow[dd, dashed]
      \arrow[dr, dashed, "\simeq"]
      && r(\vec{v})
      \arrow[dd, dashed]
      \arrow[dr, dashed, "\simeq"]
      \\
      & r(\vec{x})
      \arrow[rr, crossing over, near end]
      && r(\vec{y})
      \arrow[dd]
      \\
      r(\vec{v}')
      \arrow[rr, dashed]
      \arrow[dr, dashed]
      && r(\vec{w})
      \arrow[dr, dashed]
      \\
      & r(\vec{y}')
      \arrow[uu, near start, crossing over, leftarrow]
      \arrow[rr]
      && r(\vec{z})
    \end{tikzcd}
    \qquad \text{in }\category{T}_{\times}
  \end{equation*}
  lying over the cube in $\Finp\op$, whose diagonal morphisms are $p$-cartesian, and whose front face is pullback by assumption. The bottom face is pullback since it lies over a pullback square in $\Finp\op$ and the diagonal morphsims are $p$-cartesian, and the top face is pullback because the diagonal morphisms are equivalences. The cube lemma thus implies that the back square is pullback. Note that the back square is entirely contained in the fiber over $\langle n \rangle$, and thus can be thought of as consisting of $n$ component squares in $\category{T}$; by the equivalence $(\category{T}_{\otimes})_{\langle n \rangle} \simeq (\category{T})^{n}$, these component squares are themselves pullback.

  We now return our attention to the diagram in $\category{X}_{\otimes}$. The morphism $f$ is $r$-cocartesian by assumption. That the tensor product $\otimes$ preserves $r$-cocartesian morphisms implies that $h$ is $r$-cocartesian. Applying the Beck-Chevalley condition componentwise to the back face yields that $h'$ is componentwise cocartesian, hence $r$-cocartesian. Since $f'$ is equivalent to $h'$, $f'$ is also $r$-cocartesian. Thus, d) holds.

  We now show that 2.\ implies 1. Restricting each of the conditions of 2.\ to the fibers over $\langle 1 \rangle$ immediately implies that $r|\langle 1 \rangle$ is a Beck-Chevalley fibration, and b) implies that $r$ is monoidal, and that $q$ is cartesian-compatible. It remains to show that the tensor product $\otimes$ preserves $r$-cartesian and $r$-cocartesian morphisms. To this end, consider a square
  \begin{equation*}
    \sigma =
    \begin{tikzcd}
      \vec{x}
      \arrow[r, "f'"]
      \arrow[d, swap, "g'"]
      & \vec{y}
      \arrow[d, "g"]
      \\
      \vec{y}'
      \arrow[r, "f"]
      & \vec{z}
    \end{tikzcd}
    \qquad \text{in } \category{X}_{\otimes}
  \end{equation*}
  such that $g$ and $g'$ are $q$-cartesian, lying over a square
  \begin{equation}
    \begin{tikzcd}
      \langle n \rangle
      & \langle n \rangle
      \arrow[l, equals]
      \\
      \langle m \rangle
      \arrow[u, "\phi"]
      & \langle m \rangle
      \arrow[l, equals]
      \arrow[u, swap, "\phi"]
    \end{tikzcd}
    \qquad \text{in } \Finp\op.
  \end{equation}
  Note that since $r$ is monoidal, the image of $\sigma$ in $\category{T}_{\times}$ is automatically pullback. We now note that if $f$ is $r$-cartesian, then $f'$ is $r$-cartesian by \cite[Prop.~2.4.1.7]{highertopostheory}, and if $f$ is $r$-cocartesian, then $f'$ is $r$-cocartesian (hence $r|\langle 1 \rangle$-cocartesian) by the interchange law.
\end{proof}

%
%
%

\begin{proposition}
  For any monoidal Beck-Chevalley fibration, there is a diagram
  \begin{equation*}
    \begin{tikzcd}
      \Span'(\category{X})^{\otimes}
      \arrow[rr, "\rho"]
      \arrow[dr, swap, "\varpi"]
      && \Span(\category{T})^{\otimes}
      \arrow[dl, "\pi"]
      \\
      & \Finp
    \end{tikzcd}
  \end{equation*}
  where $\pi$ and $\varpi$ are symmetric monoidal categories and $\rho$ exhibits $\Span'(\category{X})^{\otimes}$ as a $\Span(\category{T})^{\otimes}$-monoidal category. Straightening, one finds a lax monoidal functor
  \begin{equation*}
    \hat{r}\colon (\Span(\category{T}), \widetilde{\times}) \to (\ICat, \times)
  \end{equation*}
  with the following description up to equivalence.
  \begin{itemize}
    \item On objects, the functor $\hat{r}$ sends $t \in \Span(\category{T})$ to the fiber $\category{X}_{t} \in \ICat$

    \item On morphisms, $\hat{r}$ sends a span $t \overset{g}{\leftarrow} s \overset{f}{\to} t'$ to the composition $f_{!} \circ g^{*}\colon \category{X}_{t} \to \category{X}_{t'}$.

    \item The structure morphisms
      \begin{equation*}
        \category{X}_{t} \times \category{X}_{t'} \to \category{X}_{t \times t'}
      \end{equation*}
      of the lax monoidal structure on $\hat{r}$ are given by the restriction of the tensor product $\otimes$ to the fibers over $t$ and $t'$.
  \end{itemize}
\end{proposition}
\begin{proof}
  Recall the triple structure $\triple{T}$ from the proof of \hyperref[prop:cartesian_CSMC_always_exists]{Proposition~\ref*{prop:cartesian_CSMC_always_exists}}. We further define a triple structure $\triple{X}$ as follows.
  \begin{itemize}
    \item $\category{X} = \category{X}_{\otimes}$.

    \item $\category{X}\downdag = (\category{X}_{\otimes})_{\Finp\op}(\Finp\op)^{\simeq}$.

    \item $\category{X}\updag$ consists only of $r$-cartesian morphisms.
  \end{itemize}

  One sees that this is adequate, since pullbacks of the necessary form exist by the usual procedure:
  \begin{itemize}
    \item Map the diagram down to $\Finp\op$, take the pullback there.

    \item Take a relative pullback in $\category{T}_{\times}$.

    \item Take an $r$-cartesian lift to produce an $r$-relative pullback in $\category{X}_{\times}$. This lies over a pullback in $\category{T}_{\times}$, hence is a pullback.
  \end{itemize}

  Thus, we have a map of adequate triples $\triple{X} \to \triple{T}$.
  The fact that this map satisfies the conditions of \hyperref[thm:new_barwick]{Theorem~\ref*{thm:new_barwick}} follows immediately from \hyperref[lemma:equivalent_conditions_beck_chevalley_fibration]{Lemma~\ref*{lemma:equivalent_conditions_beck_chevalley_fibration}}. This gives us a cocartesian fibration
  \begin{equation*}
    \rho\colon \Span\triple{X} \to \Span\triple{T}.
  \end{equation*}
  Combining this with the map $\pi$ of \hyperref[prop:cartesian_CSMC_always_exists]{Proposition~\ref*{prop:cartesian_CSMC_always_exists}} and pulling back along the map $\Finp \to \Span\triple{F}$ gives us the triangle
  \begin{equation*}
    \begin{tikzcd}
      \Span'(\category{X})^{\otimes}
      \arrow[rr, "\rho"]
      \arrow[dr, swap, "\varpi"]
      && \Span(\category{T})^{\otimes}
      \arrow[dl, "\pi"]
      \\
      & \Finp
    \end{tikzcd},
  \end{equation*}
  where $\pi$ is the map shown in \hyperref[prop:cartesian_CSMC_always_exists]{Proposition~\ref*{prop:cartesian_CSMC_always_exists}} to be a cocartesian fibration. It remains only to show that $\varpi = \pi \circ \rho$ is a monoidal category. The same argument as in \hyperref[prop:cartesian_CSMC_always_exists]{Proposition~\ref*{prop:cartesian_CSMC_always_exists}} shows that it suffices to show that square in $(\category{X}_{\otimes})_{\langle n \rangle}$ is ambigressive pullback if and only if each component is ambigressive pullback. The pullback part of the condition holds because of the equivalence $(\category{X}_{\otimes})_{\langle n \rangle} \simeq \category{X}^{n}$, and ambigressivity holds because $\category{X}_{\otimes} \to \category{T}_{\times}$ is by assumption cartesian-compatible.
\end{proof}

\subsection{The monoidal twisted arrow category}
\label{ssc:the_monoidal_twisted_arrow_category}

We have seen that for any $\infty$-bicategory $\CC$ (presented as a fibrant scaled simplicial set), there is an $\infty$-category $\Tw(\CC)$, the \emph{twisted arrow category} of $\CC$. If $\CC$ carries a monoidal structure, this structure is inherited by the twisted arrow $\infty$-category $\Tw(\CC)$ by defining
\begin{equation*}
  \left(
  \begin{tikzcd}
    c_{1}
    \arrow[r]
    \arrow[d, ""{name=L}]
    & c_{2}
    \arrow[d, ""{name=R, swap}]
    \\
    d_{1}
    & d_{2}
    \arrow[l]
    \arrow[from=L, to=R, Rightarrow, shorten=1.5ex, "\eta"{description}]
  \end{tikzcd}
  \right) \otimes \left(
  \begin{tikzcd}
    c_{1}'
    \arrow[r]
    \arrow[d, ""{name=L}]
    & c_{2}'
    \arrow[d, ""{name=R, swap}]
    \\
    d_{1}'
    & d_{2}'
    \arrow[l]
    \arrow[from=L, to=R, Rightarrow, shorten=1.5ex, "\eta'"{description}]
  \end{tikzcd}
  \right)
  =
  \begin{tikzcd}
    c_{1} \otimes c_{1}'
    \arrow[r]
    \arrow[d, ""{name=L}]
    & c_{2} \otimes c_{2}'
    \arrow[d, ""{name=R, swap}]
    \\
    d_{1} \otimes d_{1}'
    & d_{2} \otimes d_{2}'
    \arrow[l]
    \arrow[from=L, to=R, Rightarrow, shorten=1.5ex, "\eta \otimes \eta'"{description}]
  \end{tikzcd}
\end{equation*}
In this section, we construct this monoidal structure explicitly. 

For our construction, will need to upgrade the twisted arrow category construction to a functor 
\begin{equation*}
  \Tw\colon \ITCat \to \ICat,
\end{equation*}
where $\ITCat$ is the $\infty$-category of $\infty$-bicategories. To this end, we note that we can certainly express the twisted arrow category as an ordinary functor
\begin{equation*}
  \Tw'\colon \SSetsc \to \SSetmk.
\end{equation*}
Here we take $\SSetsc$ and $\SSetmk$ to carry their standard model structures.

\begin{lemma}
  The functor $\Tw'$ preserves weak equivalences between fibrant objects.
\end{lemma}
\begin{proof}
  Let $f\colon \CC \to \DD$ be a weak equivalence between fibrant objects in the scaled model structure. Thus, $f$ is a bicategorical equivalence between $\infty$-bicategories. Denote the map on the underlying quasicategories by $\mathring{f}\colon \category{C} \to \category{D}$. We note that $\mathring{f}$ is an equivalence of quasicategories.

  We consider the diagram
  \begin{equation*}
    \begin{tikzcd}[row sep=large, column sep=large]
      \Tw'(\CC)
      \arrow[drr, bend left=20, "\Tw'(f)"]
      \arrow[ddr, bend right=20, swap, "q"]
      \arrow[dr, dashed, "h"]
      \\
      & \mathcal{P}
      \arrow[r, "g"]
      \arrow[d, "r"]
      & \Tw'(\DD)
      \arrow[d, "p"]
      \\
      & \category{C} \times \category{C}\op
      \arrow[r, "\mathring{f} \times \mathring{f}\op"]
      & \category{D} \times \category{D}\op
    \end{tikzcd}
  \end{equation*}
  where the square formed is pullback, and $h$ is the map guaranteed us by the universal property of the pullback. We note that since $p$ is a cartesian fibration, $g$ is a weak equivalence since $\mathring{f} \times \mathring{f}\op$ is. Thus, in order to show that $\Tw'(f)$ is a weak equivalence, it suffices to show that $h$ is.

  Since $h$ is a morphism $q \to r$ of cartesian fibrations, in order to show that it is an equivalence it suffices to check that it is a fiberwise equivalence. Fix some object $C = (c, c') \in \category{C} \times \category{C}\op$. The restriction of $h$ to the fibers of $\Tw'(\CC)$ and $\category{P}$ over $C$ is the map
  \begin{equation*}
    \Map_{\CC}(c, c') \to \Map_{\DD}(f(c), f(c')).
  \end{equation*}
  This is a weak equivalence for all $C$ because $f$ is an equivalence of $\infty$-bicategories.
\end{proof}

By \hyperref[thm:quillen_equiv_ms_and_scaled]{Theorem~\ref*{thm:quillen_equiv_ms_and_scaled}}, there is a right Quillen equivalence $G\colon \SSetms \to \SSetsc$. Composing this with the functor $\Tw'$ above gives us a functor
\begin{equation*}
  \Tw' \circ G\colon \SSetms \to \SSetmk.
\end{equation*}
Since $G$ is a right Quillen functor, it preserves weak equivalences between fibrant objects. Thus, the composite $\Tw' \circ G$ does as well. Restricting to fibrant objects and taking simplicial localizations yields a functor
\begin{equation*}
  \SSetms[W^{-1}] \to \SSetmk[W^{-1}].
\end{equation*}

By \cite[Example~1.3.4.8]{luriehigheralgebra}, we have equivalences of $\infty$-categories $\SSetms[W^{-1}] \simeq \ITCat$ and $\SSetmk[W^{-1}] \simeq \ICat$, giving us our functor
\begin{equation*}
  \Tw\colon \ITCat \to \ICat.
\end{equation*}

We are now ready to construct a model for the monoidal twisted arrow category.

\begin{construction}
  The category $\ICat$ admits a Cartesian monoidal structure, which can be expressed as a commutative monoid $\ICatCocart\colon \Finp \to \ITCat$. The starting point of our construction is the composition
  \begin{equation*}
    \begin{tikzcd}
      \Finp
      \arrow[r, "\ICat^{\times}"]
      & \Cat_{(\infty, 2)}
      \arrow[r, "\Tw"]
      & \ICat
    \end{tikzcd}.
  \end{equation*}
  Note that this composition yields a commutative monoid in $\ICat$ since the functor $\Tw$ preserves products. The relative nerve of this composition is a CSMC $\Tw(\ICat)_{\otimes} \to \Finp\op$ with the following description: an $n$-simplex $\sigma$ corresponding to a diagram
  \begin{equation*}
    \begin{tikzcd}
      \Delta^{n}
      \arrow[rr, "\sigma"]
      \arrow[dr, swap, "\phi"]
      && \Tw(\ICat)_{\otimes}
      \arrow[dl, "p'"]
      \\
      & \Finp\op
    \end{tikzcd}
  \end{equation*}
  corresponds to the data of, for each subset $I \subseteq [n]$ having minimal element $i$, a map
  \begin{equation*}
    \tau(I)\colon \Delta^{I} \to \Tw(\ICat)^{i}
  \end{equation*}
  such that For nonempty subsets $I' \subseteq I \subseteq [n]$, the diagram
  \begin{equation*}
    \begin{tikzcd}
      \Delta^{I'}
      \arrow[r]
      \arrow[d, hook]
      & \Tw(\ICat)^{i'}
      \arrow[d]
      \\
      \Delta^{I}
      \arrow[r]
      & \Tw(\ICat)^{i}
    \end{tikzcd}
  \end{equation*}
  commutes.
\end{construction}

\begin{example}
  An object of $\Tw(\ICat)_{\otimes}$ lying over $\langle n \rangle$ corresponds to a collection of functors $\category{C}_{i} \to \category{D}_{i}$, $i \in \langle n \rangle^{\circ}$.
\end{example}

\begin{example}
  A morphism in $\Tw(\ICat)_{\otimes}$ lying over the active map $\langle 1 \rangle \leftarrow \langle 2 \rangle$ in $\Finp\op$ consists\footnote{Here we mean the morphism in $\Finp\op$ corresponding to the active map $\langle 2 \rangle \to \langle 1 \rangle$ in $\Finp$.} of
  \begin{itemize}
    \item A `source' object $F\colon \category{C} \to \category{C}'$

    \item A pair of `target' objects $G_{i}\colon \category{D}_{i} \to \category{D}'_{i}$, $i = 1$, $2$.

    \item A morphism $F \to G_{1} \times G_{2}$ in $\Tw(\ICat)$ corresponding to a diagram $\Delta^{3}_{\dagger} \to \ICat$ with front and back
      \begin{equation*}
        \begin{tikzcd}
          \category{C}
          \arrow[r, "\alpha"]
          \arrow[d, ""{name=L, right}, "F"{left}]
          \arrow[dr]
          & \category{D}_{1} \times D_{2}
          \arrow[d, ""{name=R, left}, "G_{1} \times G_{2}"{right}]
          \\
          \category{C}'
          & \category{D}'_{1} \times \category{D}'_{2}
          \arrow[l, "\beta"]
        \end{tikzcd}
        \qquad\text{and}\qquad
        \begin{tikzcd}
          \category{C}
          \arrow[r, "\alpha"]
          \arrow[d, ""{name=L, right}, "F"{left}]
          & \category{D}_{1} \times D_{2}
          \arrow[d, ""{name=R, left}, "G_{1} \times G_{2}"{right}]
          \arrow[dl]
          \\
          \category{C}'
          & \category{D}'_{1} \times \category{D}'_{2}
          \arrow[l, "\beta"]
        \end{tikzcd}.
      \end{equation*}
  \end{itemize}

  A morphism of the above form is cartesian if and only if the corresponding simplex $\Delta^{3}_{\dagger} \to \ICat$ is thin.
\end{example}

\begin{lemma}
  \label{lemma:triangle_of_cartesian_fibrations}
  Let $\category{C}$ be a small 1-category (and, by abuse of notation, its nerve), let $F$, $G\colon \category{C} \to \SSet$ be functors, and let $\alpha\colon F \Rightarrow G$. Suppose that $\alpha$ satisfes the following conditions.
  \begin{enumerate}
    \item For each object $c \in \category{C}$, $\alpha_{c}\colon F(c) \to G(c)$ is a cartesian fibration.

    \item For each morphism $f\colon c \to d$ in $\category{C}$, the map $Ff\colon F(c) \to F(d)$ takes $\alpha_{c}$-cartesian morphisms to $\alpha_{d}$-cartesian morphisms.
  \end{enumerate}
  Then taking the relative nerve gives a diagram
  \begin{equation*}
    \begin{tikzcd}
      N_{F}(\category{C})
      \arrow[rr, "\rho"]
      \arrow[dr, swap, "\Phi"]
      &&
      N_{G}(\category{C})
      \arrow[dl, "\Gamma"]
      \\
      & \category{C}\op
    \end{tikzcd}
  \end{equation*}
  with the following properties.
  \begin{enumerate}
    \item The maps $\Phi$, $\Gamma$, and $\rho$ are all cartesian fibrations.

    \item The $\rho$-cartesian morphisms in $N_{F}(\category{C})$ admit the following description: a morphism in $N(F)(\category{C})$ lying over a morphim $f\colon d \leftarrow c$ in $\category{C}\op$ consists of a triple $(x, y, \phi)$, where $x \in F(d)$, $y \in F(c)$, and $\phi\colon x \to Ff(y)$. Such a morphism is $\rho$-cartesian if the morphism $\phi$ is $\alpha_{d}$-cartesian.
  \end{enumerate}
\end{lemma}
\begin{proof}
  We first prove 2. We already know \cite[Lemma~3.2.5.11]{highertopostheory} that $\rho$ is an inner fibration, so it remains only to show that we can solve lifting problems
  \begin{equation*}
    \begin{tikzcd}
      \Lambda^{n}_{n}
      \arrow[r]
      \arrow[d]
      & N_{F}(\category{C})
      \arrow[d]
      \\
      \Delta^{n}
      \arrow[r]
      \arrow[ur, dashed]
      \arrow[dr, swap, "\gamma"]
      & N_{G}(\category{C})
      \arrow[d]
      \\
      & \category{C}\op
    \end{tikzcd},
  \end{equation*}
  where $\Delta^{\{n-1, n\}} \subset \Lambda^{n}_{n}$ is mapped to a cartesian morphism as described above, i.e.\ a triple $(x, y, \phi)$, where $x \in F(\gamma(n-1))$, $y \in F(\gamma(n))$, and $\phi\colon x \to F(\gamma_{n})(y)$ is $\alpha_{\gamma(n-1)}$-cartesian. This is equivalent to solving the lifting problem
  \begin{equation*}
    \begin{tikzcd}
      \Lambda^{n}_{n}
      \arrow[r]
      \arrow[d]
      & F(\gamma(0))
      \arrow[d, "\alpha_{\gamma(0)}"]
      \\
      \Delta^{n}
      \arrow[r]
      \arrow[ur, dashed]
      & G(\gamma(0))
    \end{tikzcd},
  \end{equation*}
  where $\Lambda^{n}_{n}$ is $F(\gamma_{n-1, 0})(\phi)$. But by assumption $F(\gamma_{n-1, 0})(\phi)$ is $\alpha_{\gamma(0)}$-cartesian, so this lifting problem has a solution.

  Now we show 1. The assumption that each $\alpha_{c}$ is a cartesian fibration guarantees that we have enough cartesian lifts, so $\rho$ is indeed a cartesian fibration. The maps $\Phi$ and $\Gamma$ are cartesian fibrations by definition.
\end{proof}

Since the category $\ICat$ admits products, it admits a cartesian monoidal structure, which we can write as a commutative monoid $G_{1}\colon \Finp \to \ICat$. We can equally view the cartesian monoidal structure as a cocartesian monoidal structure on $\ICat\op$, giving us a commutative monoid $G_{0}\colon \Finp \to \ICat$. Taking these together gives a commutative monoid
\begin{equation*}
  G = G_{0} \times G_{1}\colon \Finp \to \ICat;\qquad \langle n \rangle \mapsto ( \ICat\op )^{n} \times \ICat^{n}.
\end{equation*}
For each $\langle n \rangle \in \Finp\op$ there is a cartesian fibration $\alpha_{n}\colon \Tw(\ICCat)^{n} \to (\ICat^{n})\op \times \ICat^{n}$, which form the components of a natural transformation $\alpha$ from the functor
\begin{equation*}
  F\colon \Finp \to \ICat;\qquad \langle n \rangle \mapsto \Tw(\ICCat)^{n}
\end{equation*}
to the functor $G$.

We now apply the relative nerve to the data $\alpha\colon F \Rightarrow G$. Because the pointwise product of any number of thin 1-simplices in $\Tw(\ICCat)$ is again a thin 1-simplex in $\Tw(\ICCat)$, the conditions of \hyperref[lemma:triangle_of_cartesian_fibrations]{Lemma~\ref*{lemma:triangle_of_cartesian_fibrations}} are satisfied. Unrolling, we find a commuting triangle of cartesian fibrations
\begin{equation}
  \label{eq:base_triangle}
  \begin{tikzcd}
    \Tw(\ICat)_{\otimes}
    \arrow[rr, "r'"]
    \arrow[dr, swap, "q'"]
    && \TICatCart \times (\TICatCocart)\op
    \arrow[dl, "p'"]
    \\
    & \Finp\op
  \end{tikzcd},
\end{equation}
where
\begin{itemize}
  \item $\TICatCart \to \Finp\op$ is the relative nerve (as a cartesian fibration) of the functor $F_{0}$

  \item $(\TICatCocart) \to \Finp$ is the relative nerve (as a \emph{cocartesian} fibration) of the same.
\end{itemize}

Because both $\TICatCocart \to \Finp$ and $\ICatCocart \to \Finp$ classify the same functor $\Finp \to \ICat$, they are related by a fiberwise equivalence. Composing a commutative monoid $\Finp \to \ICatCocart$ with this equivalence allows us to express symmetric monoidal $\infty$-categories as commutative monoids in $\TICatCocart$.

\subsection{The monoidal category of local systems}
\label{ssc:the_monoidal_category_of_local_systems}

In this subsection, we show that the monoidal structure on the twisted arrow category, defined in \hyperref[ssc:the_monoidal_twisted_arrow_category]{Subsection~\ref*{ssc:the_monoidal_twisted_arrow_category}}, can be used to construct a monoidal structure on the category of local systems.

\begin{construction}
  We will denote the full subcategory of $\TICatCart$ on those objects $[\category{D}_{1}, \ldots, \category{D}_{n}]$ such that $\category{D}_{i}$ is an $\infty$-groupoid for all $1 \leq i \leq n$ by $\TSCart$. Note that $\TSCart \to \Finp$ can be understood as the relative nerve (as a cartesian fibration) of a commutative monoid $\Finp \to \ICat$ giving the cartesian monoidal structure on $\S$.

  Fix some monoidal $\infty$-category $\category{C}$ which admits colimits, and such that the tensor product $\otimes\colon \category{C} \to \category{C}$ preserves colimits in each slot. We express this monoidal $\infty$-category as a commutative monoid $\category{C}^{\otimes}\colon \Finp \to \TICatCocart$. Using this, we define a functor
  \begin{equation*}
    \TSCart \to \TICatCart \times (\TICatCocart)\op
  \end{equation*}
  which is the inclusion $\TSCart \hookrightarrow \TICatCart$ on the first component of the product, and given by the composition
  \begin{equation*}
    \TSCart \to \Finp\op \overset{(\category{C}^{\otimes})\op}{\to} (\TICatCocart)\op
  \end{equation*}
  on the second. Forming the pullback square
  \begin{equation*}
    \begin{tikzcd}
      \LS(\category{C})_{\otimes}
      \arrow[r]
      \arrow[d, swap, "r"]
      & \Tw(\ICat)_{\otimes}
      \arrow[d]
      \\
      \TSCart
      \arrow[r]
      & \TICatCart \times (\TICatCocart)\op
    \end{tikzcd}
  \end{equation*}
  gives us a commutative triangle
  \begin{equation*}
    \begin{tikzcd}
      \LS(\category{C})_{\otimes}
      \arrow[rr, "r"]
      \arrow[dr, swap, "q"]
      && \TSCart
      \arrow[dl, "p"]
      \\
      & \Finp\op
    \end{tikzcd}.
  \end{equation*}

  We claim that $\LS(\category{C})_{\otimes}$ is a CSMC, $\TSCart$ is a cartesian CSMC. Since the (cartesian) relative nerve construction produces a cartesian fibration by definition, the map $p$ is a cartesian fibration; by its definition, it is even a CSMC. Furthermore, because the map $r$ is a pullback of the horizontal map in \hyperref[eq:base_triangle]{Diagram~\ref*{eq:base_triangle}}, it is a cartesian fibration. Hence $q$ is also a cartesian fibration. It remains only to show that $q$ is a CSMC. It suffices to show that $r$ is a cartesian fibration of $\infty$-operads. Note that because both $p'$ and $q'$ are CSMCs, $r'$ is a cartesian fibration of $\infty$-operads. The claim follows because $r$ is a pullback of $r'$.
\end{construction}

\begin{example}
  An object of $\LS(\category{C})_{\otimes}$ lying over $\langle n \rangle$ corresponds to a collection of functors $X \to \category{C}$, $i \in \langle n \rangle^{\circ}$.
\end{example}

\begin{example}
  A morphism in $\Tw(\ICat)_{\otimes}$ lying over the active map $\langle 1 \rangle \leftarrow \langle 2 \rangle$ in $\Finp\op$ consists of
  \begin{itemize}
    \item A `source' object $F\colon X \to \category{C}$

    \item A pair of `target' objects $G_{i}\colon Y_{i} \to \category{C}$, $i = 1$, $2$.

    \item A morphism $\Tw(\ICat)$ corresponding to a diagram $\Delta^{3}_{\dagger} \to \ICat$ with front and back
      \begin{equation*}
        \begin{tikzcd}
          X
          \arrow[r, "\alpha"]
          \arrow[d, ""{name=L, right}, "F"{left}]
          \arrow[dr]
          & Y_{1} \times Y_{2}
          \arrow[d, ""{name=R, left}, "G_{1} \times G_{2}"{right}]
          \\
          \category{C}
          & \category{C} \times \category{C}
          \arrow[l, "\otimes"]
        \end{tikzcd}
        \qquad\text{and}\qquad
        \begin{tikzcd}
          X
          \arrow[r, "\alpha"]
          \arrow[d, ""{name=L, right}, "F"{left}]
          & Y_{1} \times Y_{2}
          \arrow[d, ""{name=R, left}, "G_{1} \times G_{2}"{right}]
          \arrow[dl]
          \\
          \category{C}
          & \category{C} \times \category{C}
          \arrow[l, "\otimes"]
        \end{tikzcd}.
      \end{equation*}
  \end{itemize}

  A morphism of the above form is $r$-cartesian if and only if the corresponding simplex $\Delta^{3}_{\dagger} \to \ICat$ is thin, and $q$-cartesian if and only if the corresponding simplex is thin, and $\alpha$ is an equivalence.
\end{example}

\begin{example}
  Examining the $q$-cartesian case more closely, one sees that the tensor product of two local systems $F\colon X \to \category{C}$ and $G\colon Y \to \category{C}$ is the local system given the composition
  \begin{equation*}
    F \otimes G\colon X \times Y \to \category{C} \times \category{C} \overset{\otimes}{\to} \category{C}
  \end{equation*}
\end{example}

\begin{lemma}
  \label{lemma:tensor_product_of_local_systems_preserves_cocart_edges}
  The tensor product of local systems preserves cocartesian edges (in the sense of \hyperref[def:terminology_about_csmcs]{Definition~\ref*{def:terminology_about_csmcs}}).
\end{lemma}
\begin{proof}
  In order to show that the tensor product of local systems preserves cocartesian edges, it suffices by \hyperref[lemma:co_cartesian_preservation_determined_by_bifunctor]{Lemma~\ref*{lemma:co_cartesian_preservation_determined_by_bifunctor}} to check that the tensor product of two $r|\langle 1 \rangle$-cocartesian edges is again $r|\langle 1 \rangle$-cocartesian. Let $e\colon F \to G$ and $e'\colon F' \to G'$ be $r|\langle 1 \rangle$-cocartesian edges of $\LS(\category{C})_{\otimes}$.
  \begin{equation*}
    e =
    \begin{tikzcd}
      X
      \arrow[r]
      \arrow[d, swap, "F"]
      & Y
      \arrow[d, "G"]
      \\
      \category{C}
      & \category{C}
      \arrow[l, "\id"]
    \end{tikzcd},\qquad e' =
    \begin{tikzcd}
      X'
      \arrow[r]
      \arrow[d, swap, "F'"]
      & Y'
      \arrow[d, "G'"]
      \\
      \category{C}
      & \category{C}
      \arrow[l, "\id"]
    \end{tikzcd}.
  \end{equation*}
  We wish to show that $e \otimes e'$ is $r|\langle 1 \rangle$-cocartesian.

  The tensor product $e \otimes e'$ is given, up to homotopy, by the pasting diagram
  \begin{equation*}
    \begin{tikzcd}
      X \times X'
      \arrow[r]
      \arrow[d]
      & Y \times Y'
      \arrow[d]
      \\
      \category{C} \times \category{C}
      \arrow[d, swap, "\otimes"]
      & \category{C} \times \category{C}
      \arrow[d, "\otimes"]
      \arrow[l]
      \\
      \category{C}
      & \category{C}
      \arrow[l]
    \end{tikzcd}
  \end{equation*}
  
  In order to show that this map is $r$-cocartesian, we need to show that the outer triangle
  \begin{equation*}
    \begin{tikzcd}[row sep=large]
      X \times X'
      \arrow[rr, "F \times F'", ""{below, name=M}]
      \arrow[dr, swap, "f \times f'"]
      && \category{C} \times \category{C}
      \arrow[r, "\otimes"]
      & \category{C}
      \\
      & Y \times Y'
      \arrow[urr, bend right, swap, "\otimes \circ (G \times G')"]
      \arrow[ur, swap, "G \times G'"]
      \arrow[from=M, Rightarrow, shorten=1ex]
    \end{tikzcd}
  \end{equation*}
  exhibits $\otimes \circ (G \times G')$ as the left Kan extension of $\otimes \circ (F \times F')$ along $f \times f'$. But this follows from the fact that $\otimes$ preserves colimits in both slots.
\end{proof}

\begin{proposition}
  The map $r$ above is a monoidal Beck-Chevalley fibration.
\end{proposition}
\begin{proof}
  We need to check the conditions (M1)--(M5) of \hyperref[def:monoidal_beck_chevalley_fibration]{Definition~\ref*{def:monoidal_beck_chevalley_fibration}}. The condition (M1) is the content of \hyperref[prop:local_systems_are_beck_chevalley]{Proposition~\ref*{prop:local_systems_are_beck_chevalley}}. Conditions (M2), (M3), and (M5) follow immediately from \hyperref[lemma:equivalent_conditions_to_be_cartesan]{Lemma~\ref*{lemma:equivalent_conditions_to_be_cartesan}}, using the fact that $r$ is a cartesian fibration. That (M4) holds is the content of \hyperref[lemma:tensor_product_of_local_systems_preserves_cocart_edges]{Lemma~\ref*{lemma:tensor_product_of_local_systems_preserves_cocart_edges}}.
\end{proof}

\begin{corollary}
  There is a lax monoidal functor
  \begin{equation*}
    \hat{r}\colon (\Span(\S), \widetilde{\times}) \to (\ICat, \times),
  \end{equation*}
  with the following description up to equivalence.
  \begin{itemize}
    \item On objects, the functor $\hat{r}$ sends a space $X$ to the $\infty$-category $\LS(\category{C})_{X}$ of $\category{C}$-local systems on $X$.

    \item On morphisms, the functor $\hat{r}$ sends a span of spaces $X \overset{g}{\leftarrow} Y \overset{f}{\to} X'$ to the pull-push
      \begin{equation*}
        f_{!} \circ g^{*}\colon \LS(\category{C})_{X} \to \LS(\category{C})_{X'}.
      \end{equation*}

    \item The structure morphisms of the lax monoidal structure are the maps
      \begin{equation*}
        \LS(\category{C})_{X} \times \LS(\category{C})_{X'} \to \LS(\category{C})_{X \times X'}
      \end{equation*}
      given by the composition
      \begin{equation*}
        \Fun(X, \category{C}) \times \Fun(Y, \category{C}) \overset{\times}{\to} \Fun(X \times Y, \category{C} \times \category{C}) \overset{\otimes}{\to} \Fun(X \times Y, \category{C}),
      \end{equation*}
      under the identification $\LS(\category{C})_{X} \cong \Fun(X, \category{C})$.
  \end{itemize}
\end{corollary}

\printbibliography

\end{document}